\setlist[enumerate]{label*=\arabic*.} 
\newcommand{\nc}{\newcommand}
\newcommand{\rc}{\renewcommand}
\nc\on{\operatorname}
\nc{\Spec}{\on{Spec}}
\nc{\Proj}{\on{Proj}}
\nc\Id{\on{Id}}
\nc{\Hom}{\on{Hom}}
\nc{\res}{\on{res}}
\nc{\Cone}{\on{Cone}}
\nc{\Conv}{\on{Conv}}
\nc{\tr}{\on{tr}}
\nc{\Lie}{\on{Lie}}
\nc{\Der}{\on{Der}}
\nc{\Aut}{\on{Aut}}
\nc{\End}{\on{End}}
\nc{\Ad}{\on{Ad}}
\nc{\ad}{\on{ad}}
\nc{\Ext}{\on{Ext}}
\nc{\Mor}{\on{Mor}}
\nc{\Tor}{\on{Tor}}
\nc{\Gal}{\on{Gal}}
\nc{\ab}[1]{{#1}^{\on{ab}}}
\nc{\Nm}{\on{Nm}}
\renewcommand{\d}{\text{d}}
\nc{\lan}{\langle}
\nc{\ran}{\rangle}
\nc{\la}{\leftarrow}
\nc{\ra}{\rightarrow}
\nc{\injto}{\hookrightarrow}
\nc{\surjto}{\twoheadrightarrow}
\nc{\di}{\d_{x_i}}
\rc{\dj}{\d_{x_j}}
\nc{\dol}{\partial}
\nc{\pder}[1]{\frac{\dol}{\dol {#1}}}
\nc{\pderh}[2]{\frac{\dol^{#2}}{\dol {#1}^{#2}}}
\nc{\bs}{\bigskip}
\nc{\ms}{\medskip}
\nc{\noi}{\noindent}
\nc{\tn}{\textnormal}
\nc{\tb}{\textbf}
\nc{\mb}{\mathbf}
\nc{\dsp}{\displaystyle}
\nc{\tc}{\textcolor}
\rc{\AA}{\mathbb{A}}
\nc{\CC}{\mathbb{C}}
\nc{\FF}{\mathbb{F}}
\nc{\KK}{\mathbb{K}}
\nc{\NN}{\mathbb{N}}
\nc{\PP}{\mathbb{P}}
\nc{\QQ}{\mathbb{Q}}
\nc{\RR}{\mathbb{R}}
\rc{\SS}{\mathbb{S}}
\nc{\ZZ}{\mathbb{Z}}
\nc{\CM}{\mathcal{M}}
\nc{\mf}{\mathfrak}
\rc{\sf}{\mathsf}
\nc{\mc}{\mathcal}
\nc{\fra}{\mathfrak{a}}
\nc{\frb}{\mathfrak{b}}
\nc{\frp}{\mathfrak{p}}
\nc{\vv}[2]{\begin{bmatrix} #1\\ #2 \end{bmatrix}}
\nc{\vvv}[2]{\begin{bmatrix} #1\\ #2 \end{bmatrix}}
\nc{\mat}[4]{\begin{bmatrix} #1 & #2 \\ #3 & #4 \end{bmatrix}}
\nc{\mmat}[9]{\begin{bmatrix} #1 & #2 & #3 \\ #4 & #5 & #6 \\ #7 & #8 & #9 \end{bmatrix}}
\nc{\ptl}[2]{\frac{\d #1}{\d #2}}
\nc{\congr}[3]{#1 \stackbin{#3}{\equiv} #2}
\theoremstyle{definition}
\rc{\theq}{\thesubsection.\Alph{q}}
\rc{\thedis}{\thesubsection.\Alph{dis}}
\newtheorem{theorem}{Theorem}
\newtheorem{lemma}{Lemma}
\newtheorem{definition}{Definition}
\newtheorem{proposition}{Proposition}
\newtheorem{corollary}{Corollary}
\numberwithin{theorem}{section}
\numberwithin{proposition}{section}
\numberwithin{definition}{section}
\numberwithin{lemma}{section}
\numberwithin{corollary}{section}
\begin{document}

\title{Cohomology of $p$-adic fields and \\
Local class field theory}
\author[U. Lim]{Uzu Lim}
\address{Mathematical Institute, 
University of Oxford, Radcliffe Observatory, Andrew Wiles Building, Woodstock Rd, Oxford OX2 6GG}
\email{lims@maths.ox.ac.uk}

\maketitle

\begin{abstract}
    In this expository article, we outline a basic theory of group (co)homology and prove a cohomological formulation of the Local Reciprocity Law:
    $$\Gal(L/K)^{\on{ab}} \cong H_T^{-2}(\Gal(L/K),\ZZ) \cong H_T^{0}(\Gal(L/K),L^\times) \cong \frac{K^\times}{\Nm_{L/K}(L^\times)}$$
    We first recall basic facts about local fields and homological algebra. Then we define group (co)homology, Tate cohomology, and furnish a toolbox. The Local Reciprocity Law is proven in an abstract cohomological setting, then applied to the case of local fields.
\end{abstract}

\tableofcontents
\pagebreak

The primary reference for this article is \cite{milne_cft}. A good reference for an introductory course in algebraic number theory is \cite{milne_ant}. A good reference for the theory of local fields is \cite{serre_lf}. This article was written following a course by Jeehoon Park in Postech in 2017.

\section{Preliminaries}

Throughout the article, a function $f$ will be also written $f=(x \mapsto f(x))$.

We recall some facts from algebraic number theory and homological algebra here.

\subsection{Local Fields}

A local field is a discretely valued field which is complete and has finite residue field. The residue field of a local field $K$ is $\mathcal O_K / \mf m_K$ where $\mf m_K$ is the unique maximal ideal of $\mc O_K$. Here onwards we write $p$ for the residue characteristic of a local field. Residue fields of local fields $L,K$ will be denoted by $\ell, k$. The group of units of $K$ will be denoted by $U_L$.

\begin{proposition}
A nonarchimedian local field is either a finite extension of $\QQ_p$ or $\FF_p((t))$.
\end{proposition}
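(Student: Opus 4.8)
The plan is to separate into the two cases according to the characteristic of $K$ itself. Let $K$ be a nonarchimedean local field with residue field $k$ of characteristic $p$; since $k$ is finite, $k = \FF_q$ for some power $q$ of $p$. Write $v$ for the normalized discrete valuation on $K$ and $\mc O_K$ for its ring of integers. We distinguish $\on{char} K = 0$ from $\on{char} K = p$, and show these lead to finite extensions of $\QQ_p$ and $\FF_p((t))$ respectively.

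First I would handle the case $\on{char} K = p$. Here I want to produce an isomorphism $K \cong k((t))$. Pick a uniformizer $\pi \in \mc O_K$ (so $v(\pi) = 1$). Since $\on{char} K = p$, the prime field $\FF_p$ sits inside $\mc O_K$, and I would invoke (a form of) Hensel's lemma or the theory of Teichm\"uller/multiplicative representatives to lift $k = \FF_q$ to a coefficient field $\FF_q \injto \mc O_K$: concretely, the $(q-1)$-st roots of unity in $\mc O_K$ together with $0$ give a set of representatives closed under multiplication, and because $\on{char} K = p$ they are also closed under addition, hence form a subfield isomorphic to $k$. Then every element of $\mc O_K$ has a unique expansion $\sum_{i \ge 0} a_i \pi^i$ with $a_i$ in this copy of $k$, using completeness to make sense of the infinite sum and the valuation to get uniqueness; this yields $\mc O_K \cong k[[t]]$ and $K \cong k((t))$. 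Finally $k((t))/\FF_p((t))$ is finite because $k/\FF_p$ is, so $K$ is a finite extension of $\FF_p((t))$.

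Next the case $\on{char} K = 0$. Then $K$ contains $\QQ$, and since $p = \on{char} k$ annihilates the residue field, $v(p) > 0$, so $p$ lies in $\mf m_K$; thus the valuation on $K$ restricts to the $p$-adic valuation on $\QQ$ (up to normalization), and completeness of $K$ forces $\QQ_p \subseteq K$. It remains to see $[K : \QQ_p] < \infty$. I would bound the degree by the product of ramification and residue degrees: the residue degree $[k : \FF_p]$ is finite since $k$ is finite, and the ramification index $e = v(p)$ is finite since $v$ is a discrete (hence $\ZZ$-valued) valuation; a uniformizer $\pi$ of $K$ and multiplicative representatives of $k$ over $\FF_p$ then span $K$ over $\QQ_p$ in finitely many terms, using the same $\pi$-adic expansion argument and completeness. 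Hence $[K : \QQ_p] \le e \cdot [k : \FF_p] < \infty$.

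The main obstacle is the construction of a coefficient field in the equal-characteristic case and, correspondingly, the clean ``$\pi$-adic expansion with Teichm\"uller digits'' argument that underlies both cases; this is exactly where Hensel's lemma (lifting roots of $X^{q} - X$) and completeness do the real work, and where one must be careful that the set of multiplicative representatives is genuinely a subring when $\on{char} K = p$. The reduction to $\QQ_p \subseteq K$ or $\FF_p((t)) \subseteq K$ is then formal, and the finiteness of the degree is a routine $e \cdot f$ count. One could alternatively cite the structure theory of complete discretely valued fields (Cohen's theorem) to shortcut the coefficient-field construction, but I expect the paper to prefer the self-contained Hensel-lemma route given its expository aims.
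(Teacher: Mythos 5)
Your proposal is correct, and it follows the standard classification argument: Teichm\"uller lifting via Hensel's lemma to build a coefficient field and the expansion $\mc O_K \cong k[[t]]$ in equal characteristic, and the containment $\QQ_p \subseteq K$ plus the $e\cdot f$ successive-approximation bound in mixed characteristic. The paper itself gives no proof of this proposition (it defers to Milne's algebraic number theory text), and the argument there is essentially the one you outline, so there is nothing substantive to contrast; the only place to be a bit more careful is the final degree bound, where one should state explicitly that $\{c_i\pi^j\}_{1\le i\le f,\,0\le j\le e-1}$ generates $\mc O_K$ as a $\ZZ_p$-module by successive approximation, completeness supplying convergence.
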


\begin{proposition}
For a finite Galois extension $L/K$, there is a surjection
$$\Gal(L/K) \ra \Gal(\ell / k) \ra 0$$
In particular, for unramified extension $L/K$, there is an element of $\Gal(L/K)$ that induces Frobenius map on the residue fields, also referred to as a Frobenius element and denoted $\on{Frob}_{L/K}$. In particular, the Galois group is cyclic.
\end{proposition}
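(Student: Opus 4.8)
The plan is to produce the reduction homomorphism $\Gal(L/K)\to\Gal(\ell/k)$, prove it is surjective, and then deduce the unramified statements by a counting argument. For the first step, recall that $\mc O_L$ is the integral closure of $\mc O_K$ in $L$, so any $\sigma\in\Gal(L/K)$, fixing $K$ pointwise, carries integral elements to integral elements and hence satisfies $\sigma(\mc O_L)=\mc O_L$ (apply the same to $\sigma^{-1}$ for equality). Since $\mc O_L$ is local, $\sigma(\mf m_L)$ is again its unique maximal ideal, so $\sigma(\mf m_L)=\mf m_L$, and $\sigma$ descends to an automorphism $\bar\sigma$ of $\ell=\mc O_L/\mf m_L$ fixing $k=\mc O_K/\mf m_K$. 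The assignment $\sigma\mapsto\bar\sigma$ is visibly a group homomorphism $\Gal(L/K)\to\Aut(\ell/k)$, and because $\ell/k$ is an extension of finite fields it is Galois with $\Aut(\ell/k)=\Gal(\ell/k)$ cyclic, generated by $x\mapsto x^{|k|}$.

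For surjectivity, invoke the primitive element theorem to write $\ell=k(\bar\alpha)$ and lift $\bar\alpha$ to some $\alpha\in\mc O_L$. Let $g\in\mc O_K[X]$ be the minimal polynomial of $\alpha$ over $K$; its coefficients lie in $\mc O_K$ since $\alpha$ is integral over the integrally closed ring $\mc O_K$. As $L/K$ is Galois, $g$ splits into linear factors over $\mc O_L$ and $\Gal(L/K)$ acts transitively on its roots. Reducing modulo $\mf m_L$ shows that the roots of $\bar g\in k[X]$ lying in $\ell$ are exactly the residues of the roots of $g$; in particular, since the minimal polynomial of $\bar\alpha$ over $k$ divides $\bar g$ and splits in $\ell$, every $k$-conjugate $\tau(\bar\alpha)$ with $\tau\in\Gal(\ell/k)$ is the residue of some root $\alpha'$ of $g$. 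Choosing $\sigma\in\Gal(L/K)$ with $\sigma(\alpha)=\alpha'$ gives $\bar\sigma(\bar\alpha)=\tau(\bar\alpha)$, and since $\bar\alpha$ generates $\ell$ over $k$ this forces $\bar\sigma=\tau$. Hence the map is onto.

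Finally, if $L/K$ is unramified then $[\ell:k]=[L:K]$ by definition, so $\Gal(\ell/k)$ and $\Gal(L/K)$ have the same order and the surjection just established is an isomorphism; pulling back the canonical generator $x\mapsto x^{|k|}$ of $\Gal(\ell/k)$ yields the Frobenius element $\on{Frob}_{L/K}$, and $\Gal(L/K)$ is therefore cyclic, generated by it. The only genuinely nontrivial point is the surjectivity step; everything else is formal bookkeeping. The delicate aspect there is that when $L/K$ is ramified $\bar g$ may acquire repeated roots, so one cannot match root sets bijectively and must instead argue at the level of existence of a single root with the prescribed residue — it is precisely the choice of a \emph{primitive} element $\bar\alpha$ that allows matching one conjugate to pin down the entire residual automorphism.
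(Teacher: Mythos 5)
Your argument is correct: the paper itself states this proposition without proof (deferring to Milne's \emph{Algebraic Number Theory}), and what you give is essentially that standard argument — stability of $\mc O_L$ and $\mf m_L$ under $\Gal(L/K)$ to get the reduction map, then surjectivity via a primitive element $\bar\alpha$ of $\ell/k$, a lift $\alpha$, and transitivity of $\Gal(L/K)$ on the roots of its minimal polynomial. The only step you pass over silently is that in the unramified case $[\ell:k]=[L:K]$ rests on the identity $[L:K]=ef$ for complete discretely valued fields, not merely on the definition $e=1$; with that cited, the counting argument and the extraction of $\on{Frob}_{L/K}$ and cyclicity are complete.
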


\begin{lemma}
    Suppose $L/K$ is a finite extension of local fields with residue fields $\ell/k$. If $L=K(\alpha)$ with $\alpha \in \mc O_L$, then $\ell = k(\bar \alpha)$.
\end{lemma}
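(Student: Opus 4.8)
The plan is to translate the statement into one about the minimal polynomial of $\alpha$ and its reduction. Let $f\in K[X]$ be the minimal polynomial of $\alpha$ over $K$; since $L=K(\alpha)$ we have $\deg f=[L:K]$. Because $\alpha\in\mathcal O_L$ it is integral over $\mathcal O_K$, and $\mathcal O_K$, being a discrete valuation ring, is integrally closed, so in fact $f$ is a monic polynomial in $\mathcal O_K[X]$. Reducing its coefficients modulo $\mathfrak m_K$ gives a monic $\bar f\in k[X]$ with $\bar f(\bar\alpha)=0$ in $\ell$; this already shows $k(\bar\alpha)\subseteq\ell$, so what remains is the reverse inclusion $\ell\subseteq k(\bar\alpha)$, equivalently $[k(\bar\alpha):k]=[\ell:k]$.

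I would next use completeness of $\mathcal O_K$ to pin down the shape of $\bar f$. By Hensel's lemma in its factorization form, any factorization of $\bar f$ into coprime monic factors lifts to a factorization of $f$ over $\mathcal O_K$; since $f$ is irreducible over $K$, no nontrivial such factorization of $\bar f$ can exist, so $\bar f=\bar g^m$ for a single monic irreducible $\bar g\in k[X]$ and some $m\ge1$. As $\ell$ is a field, $\bar f(\bar\alpha)=0$ forces $\bar g(\bar\alpha)=0$, so $\bar g$ is precisely the minimal polynomial of $\bar\alpha$ over $k$, whence $[k(\bar\alpha):k]=\deg\bar g=[L:K]/m$. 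Combining this with the identity $[L:K]=e_{L/K}\,[\ell:k]$ (ramification index times residue degree, valid for finite extensions of local fields), the assertion $\ell=k(\bar\alpha)$ becomes the single numerical identity $m=e_{L/K}$, i.e.\ $\deg\bar g=[\ell:k]$.

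The step I expect to be the main obstacle is establishing $m=e_{L/K}$, equivalently that $\bar\alpha$ generates all of $\ell$ and not merely a proper subfield. One inequality comes for free — $k(\bar\alpha)\subseteq\ell$ forces $\deg\bar g\mid[\ell:k]$, hence $m\ge e_{L/K}$ — so the real work is the reverse. Since this cannot be read off from $\alpha\in\mathcal O_L$ alone (a careless integral generator such as $\alpha=c+\pi_K\gamma$, with $\pi_K$ a uniformizer of $K$, $c\in\mathcal O_K$, and $\gamma$ a generator of $L/K$, has $\bar\alpha\in k$), the plan is to reduce first to a generator with $\mathcal O_L=\mathcal O_K[\alpha]$ — available for separable extensions of local fields — so that $\mathcal O_L/\mathfrak m_K\mathcal O_L\cong k[X]/(\bar g^m)$ as $k$-algebras; comparing this Artinian local ring with $\mathcal O_L/\mathfrak m_L^{e_{L/K}}$ identifies their residue fields and forces $\deg\bar g=[\ell:k]$, hence $m=e_{L/K}$. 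The whole difficulty is thus concentrated in normalizing the choice of $\alpha$ so that its reduction $\bar\alpha$ carries the full residue-field information, after which the minimal-polynomial computation above closes the argument.
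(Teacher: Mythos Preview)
Your counterexample is correct and decisive: the lemma as stated is false. Take $L/K$ unramified of degree $>1$, choose any integral generator $\gamma$ of $L$ over $K$, and set $\alpha=\pi_K\gamma$; then $L=K(\alpha)$ and $\alpha\in\mathcal O_L$, but $\bar\alpha=0$, so $k(\bar\alpha)=k\neq\ell$. So there is nothing to prove here, and your difficulty in closing the argument is not a gap in your reasoning but a reflection of the fact that the claimed conclusion does not follow from the stated hypothesis.

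The paper's own proof has exactly the hole you identified. It writes an arbitrary $\beta\in\mathcal O_L$ as $q(\alpha)/\pi_K^m$ with $q\in\mathcal O_K[X]$ and then jumps to $\bar\beta\in k[\bar\alpha]$; but when $m>0$ this only says $\bar q(\bar\alpha)=0$ in $\ell$, which gives no information about $\bar\beta$. In the counterexample above, every lift of a nontrivial residue class necessarily has $m>0$, and the argument collapses. Your proposed repair---replace the hypothesis $\alpha\in\mathcal O_L$ by $\mathcal O_L=\mathcal O_K[\alpha]$---does yield a true statement, and your sketch via $\mathcal O_L/\mathfrak m_K\mathcal O_L\cong k[X]/(\bar g^m)$ is the right way to see it. If you look at how the lemma is actually invoked later (in the proof of the bijection between unramified extensions and residue-field extensions), the generators there are produced by Hensel-lifting a residue-field generator, so $\bar\alpha$ generates $\ell$ by construction; the application survives even though the lemma in its stated generality does not.
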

\begin{proof}
    As $L=K(\alpha)=K[\alpha]$, each element in $\ell$ is represented by a polynomial of $\alpha$ in $K$, which is a rational function of $\alpha$ in $\mc O_K$ if we factor out the prime power (e.g. $\frac14 + \frac18 \alpha + \alpha^2 = \frac{2+\alpha + 8\alpha^2}{8}$). Thus $\ell = k(\bar \alpha)$.
\end{proof}

\begin{lemma}
    Suppose $L/K$ is a Galois extension of local fields. Then each element of $\Gal(L/K)$ preserves valuation.
\end{lemma}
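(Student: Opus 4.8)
The plan is to reduce the statement to the fundamental fact — recalled in any treatment of local fields — that since $K$ is complete with respect to its discrete valuation $v_K$, this valuation admits a \emph{unique} extension to the finite extension $L$; denote this extension $v_L$. Granting this, the argument is short. For $\sigma \in \Gal(L/K)$ set $v_L^\sigma := v_L \circ \sigma$. First I would check that $v_L^\sigma$ is again a (discrete) valuation on $L$: this is immediate, since precomposing a valuation with a field automorphism yields a valuation with the same value group. Next, for every $x \in K$ we have $v_L^\sigma(x) = v_L(\sigma x) = v_L(x)$ because $\sigma$ fixes $K$ pointwise; hence $v_L^\sigma$ and $v_L$ restrict to the same valuation $v_K$ on $K$. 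By the uniqueness of the extension, $v_L^\sigma = v_L$, i.e. $v_L(\sigma x) = v_L(x)$ for all $x \in L$, which is exactly the claim.

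An alternative route, avoiding the uniqueness theorem but still using completeness of $K$, is to invoke the explicit formula $v_L(x) = c\cdot v_K\!\big(\Nm_{L/K}(x)\big)$ valid for $x \in L^\times$, where $c = c(L/K)$ is a fixed positive constant (its precise value is irrelevant; only monotonicity in $v_K(\Nm_{L/K}(x))$ matters). Since $L/K$ is Galois, $\Nm_{L/K}(x) = \prod_{\tau \in \Gal(L/K)} \tau(x)$, and for fixed $\sigma$ the map $\tau \mapsto \tau\sigma$ permutes $\Gal(L/K)$, so $\Nm_{L/K}(\sigma x) = \prod_{\tau} \tau(\sigma x) = \prod_{\tau'} \tau'(x) = \Nm_{L/K}(x)$. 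Therefore $v_L(\sigma x) = v_L(x)$, and $\sigma$ preserves the valuation.

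There is essentially no obstacle beyond quoting the correct standard input; the point worth stressing is that completeness of $K$ is indispensable, since for a non-complete base the Galois group generally permutes the several extensions of $v_K$ rather than fixing a chosen one, so no individual $\sigma$ need preserve it. I would also remark that the conclusion transfers at once to the valuation ring and maximal ideal, so each $\sigma \in \Gal(L/K)$ stabilizes $\mathcal O_L$ and $\mf m_L$ and hence induces an automorphism of the residue field $\ell$ over $k$ — this is precisely the ingredient underlying the surjection $\Gal(L/K) \ra \Gal(\ell/k)$ of the preceding proposition.
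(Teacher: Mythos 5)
Your argument is correct, but it runs along a different line from the paper's. The paper argues via Newton polygons: for $\alpha \in L$ the minimal polynomial over $K$ is irreducible, and over a complete discretely valued field an irreducible polynomial has a single-slope Newton polygon, so all of its roots --- i.e.\ all conjugates of $\alpha$ --- have the same valuation; since $\sigma\alpha$ is such a conjugate, $\sigma$ preserves valuation. You instead quote the uniqueness of the extension of $v_K$ to $L$ (valid because $K$ is complete) and observe that $v_L \circ \sigma$ is another extension restricting to $v_K$, hence equals $v_L$; your second route through $v_L(x) = c\, v_K(\Nm_{L/K}(x))$ and the $\sigma$-invariance of the norm is equally valid, though that formula is itself usually a corollary of the same uniqueness theorem, so it is less an independent proof than a repackaging of the same input. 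What each buys: your route is the cleanest if one is willing to cite the standard extension theorem as a black box, and your closing remarks (why completeness is essential, and that stability of $\mc O_L$ and $\mf m_L$ follows, feeding the surjection $\Gal(L/K) \ra \Gal(\ell/k)$) are exactly the right context; the paper's Newton-polygon argument is more hands-on and yields directly the slightly sharper statement that all $K$-conjugates of a given element share one valuation, without invoking uniqueness of the extended valuation.
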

\begin{proof}
    For $\alpha \in L$, the Newton polygon of its minimal polynomial $f$ describes factoring into products of linear factors with same valuation. This implies that conjugates of each element are all of the same valuation.
\end{proof}

\begin{proposition}
Suppose $K$ is a local field and $k$ its residue field, where both are perfect fields. Suppose $L/K$ is an algebraic extension and $\ell/k$ the corresponding extension of residue fields. Then:
\begin{enumerate}
    \item There is a bijection
    $$\{\text{Finite unramified extensions of $K$ in $L$}\}\leftrightarrow \{\text{Finite extensions of $k$ in $\ell$}\} $$
    given by taking a finite unramified extension to its residue field.
    \item If $K' \leftrightarrow k', K'' \leftrightarrow k''$, then $K' \subseteq K'' \iff k' \subseteq k''$
    \item If $K' \leftrightarrow k',$ $K'/K$ is Galois iff $k'/k$ is Galois, in which case
    $$\Gal(K'/K) \cong \Gal(k'/k)$$
\end{enumerate}
\end{proposition}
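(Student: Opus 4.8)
The plan is to build the inverse of the residue-field map by Hensel's lemma and then read all three assertions off a single degree count, taking care that although $L$ itself need not be complete, every element I produce lives in a \emph{finite}---hence complete---subextension where Hensel applies. For \textbf{surjectivity}, given a finite $\ell'/k$ inside $\ell$, perfectness of $k$ makes it separable, so $\ell'=k(\bar\alpha)$ with $\bar\alpha$ having separable minimal polynomial $\bar g\in k[x]$; I choose a monic lift $g\in\mathcal O_K[x]$ of the same degree and (using $\mathcal O_L\surjto\ell$) a lift $\alpha_0\in\mathcal O_L$ of $\bar\alpha$. Inside the complete ring $\mathcal O_{K(\alpha_0)}\subseteq\mathcal O_L$ one has $\bar g(\bar\alpha)=0$, $\bar g'(\bar\alpha)\ne0$, so Hensel gives $\alpha\in\mathcal O_{K(\alpha_0)}\subseteq\mathcal O_L$ with $g(\alpha)=0$ reducing to $\bar\alpha$. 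Set $K'=K(\alpha)$. Then $[K':K]\le\deg g=[\ell':k]$, while the residue field of $K'$ contains $\bar\alpha$, hence $\ell'$, so has degree $\ge[\ell':k]$ over $k$; since $e(K'/K)f(K'/K)=[K':K]$ for local fields, all inequalities are equalities, forcing $e(K'/K)=1$ and residue field exactly $\ell'$ (consistent with the earlier lemma $L=K(\alpha)\Rightarrow\ell=k(\bar\alpha)$).

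For \textbf{injectivity and monotonicity (parts 1 and 2)}, one direction of (2) is trivial, so suppose $k'\subseteq k''$; write $K'=K(\alpha)$ with $g,\bar g,\bar\alpha$ as above and $k'=k(\bar\alpha)$. Then $\bar\alpha\in k''=$ residue field of $K''$, so Hensel inside the complete ring $\mathcal O_{K''}$ yields $\alpha'\in\mathcal O_{K''}$ with $g(\alpha')=0$ reducing to $\bar\alpha$. Both $\alpha,\alpha'$ lie in $\mathcal O_{K'K''}$, are roots of $g$, reduce to $\bar\alpha$, and $\bar g'(\bar\alpha)\ne0$, so Hensel \emph{uniqueness} gives $\alpha=\alpha'$ and thus $K'=K(\alpha)\subseteq K''$. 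Taking $k'=k''$ gives injectivity of the correspondence, which combined with surjectivity proves (1) and (2).

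For the \textbf{Galois statement (part 3)}, keep $K'=K(\alpha)\leftrightarrow k'=k(\bar\alpha)$ with minimal polynomials $g,\bar g$; note $g$ is separable since $\overline{g'}=\bar g'$ and $\bar g$ is separable. Let $\bar k'$ be the splitting field of $\bar g$ over $k$ (finite Galois), and let $M/K$ be the unramified extension corresponding to $\bar k'$ by part (1), so $K'\subseteq M$ by (2). In $M$ the polynomial $\bar g$ splits, so Hensel lifts its $\deg g$ roots to roots of $g$ in $\mathcal O_M$, distinct by Hensel uniqueness; hence $g$ splits in $M$, so $M$ contains the splitting field $N$ of $g$ over $K$. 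Conversely the residue field of $N$ contains every reduced root of $g$, i.e. contains $\bar k'$, and sits inside the residue field $\bar k'$ of $M$, so equals $\bar k'$; by injectivity, $N=M$. Thus $M$ is the Galois closure of $K'/K$, and $K'/K$ is Galois $\iff K'=M\iff k'=\bar k'\iff\bar g$ splits in $k'\iff k'/k$ is Galois (by separability). When this holds, the reduction map $\Gal(M/K)\to\Gal(\bar k'/k)$ of the earlier proposition is surjective, and injective because a $\sigma$ inducing the identity on $\bar k'$ sends each root $\beta$ of $g$ to a root with the same reduction, hence $\sigma(\beta)=\beta$ by Hensel uniqueness, so $\sigma=\mathrm{id}$ as the roots generate $M$; therefore $\Gal(K'/K)=\Gal(M/K)\cong\Gal(\bar k'/k)=\Gal(k'/k)$.

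The \textbf{main obstacle} is exactly the completeness issue: $L$ need not be complete, so Hensel's lemma (and, crucially, its uniqueness clause) cannot be applied to $\mathcal O_L$ directly. Every existence and uniqueness step must be relocated to an appropriate finite subextension ($\mathcal O_{K(\alpha_0)}$, $\mathcal O_{K'K''}$, $\mathcal O_M$); once that is done, the identity $ef=n$ for local fields and Hensel uniqueness carry out the rest mechanically.
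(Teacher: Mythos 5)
Your proof is correct, and it diverges from the paper's in ways worth recording. For surjectivity you follow the same Hensel-plus-degree-count strategy as the paper, but you are more careful about where Hensel's lemma is applied: the paper invokes it without addressing that $L$ need not be complete, while you relocate every existence and uniqueness step to a finite (hence complete) subextension such as $K(\alpha_0)$, $K'K''$ or $M$. For injectivity and part (2) the paper forms the compositum $K'K''$, quotes a discriminant computation from Milne to see that it is unramified, and counts degrees; you instead prove the stronger monotonicity statement directly, using the uniqueness clause of Hensel's lemma in $\mc O_{K'K''}$ to identify the two lifted roots, which avoids the ``compositum of unramified extensions is unramified'' input altogether. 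For part (3) the paper's argument is only a sketch (it constructs the reduction homomorphism $\Gal(K'/K)\ra\Gal(k'/k)$ and stops, deferring to Milne), whereas your splitting-field/Galois-closure argument, with injectivity of the reduction map again coming from Hensel uniqueness applied to the roots of $g$, is complete; this is the main thing your route buys.

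Two small points to tighten, neither fatal. First, when you ``write $K'=K(\alpha)$ with $g,\bar g,\bar\alpha$ as above'' for an arbitrary finite unramified $K'$, add the one-line justification: lift a representative of $\bar\alpha$ inside the complete ring $\mc O_{K'}$ by Hensel to get $\alpha\in\mc O_{K'}$ with $g(\alpha)=0$, then the degree count from your surjectivity step forces $K(\alpha)=K'$. Second, in part (3) you take $M$ ``by part (1)'', which requires the splitting field $\bar k'$ of $\bar g$ to lie in $\ell$; under the paper's convention that residue fields are finite this is automatic (then $\bar k'=k'$), but in the general perfect-residue-field reading it can fail, so you should either construct $M$ inside a separable closure of $K$ containing $L$ (parts (1) and (2) apply there verbatim, and $M$ need not lie in $L$ for the argument) or remark that the finite-residue-field case makes the issue vacuous.
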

\begin{proof}
    We first show that every finite intermediate extension $k'$ is a residue field of some unramified extension. Let $k'=k(\alpha_0)$ and $f_0 \in k[x] \subset \ell[x]$ be the (separable) minimal polynomial of $\alpha_0$. Let $f \in K[x] \subset L[x]$ be any lift of $f_0$, which by Hensel's lemma has a simple root $\alpha$ that lifts $\alpha_0$. $K(\alpha)/K$ is unramified since $f(\alpha)=0$ and $f$ is irreducible (since its reduction $f_0$ is irreducible), implying $[K(\alpha):K] = \deg f = \deg f_0 = [k(\alpha_0):k]$. By the previous lemma, residue field of the unramified extension $K(\alpha)$ is $k(\alpha_0) = k'$ as desired. 
    
    Suppose $K', K''$ are finite unramified intermediate extensions with residue field both $k'$; we will show that $K'=K''$. $K'K''$ also has residue field $k'$ since if $K' = K(\alpha), K = K(\beta)$, then $K'K'' = K(\alpha, \beta)$, $k'=k(\bar \alpha) = k(\bar \beta) \implies \bar \beta \in k(\bar \alpha)$ so that by the previous lemma the residue field of $K'K''$ is $k'(\bar \beta) = k(\bar \alpha)(\bar \beta) = k(\alpha) = k'$. By discriminant calculation (details on page 128 of Milne's ANT textbook), $K'K''/K$ is unramified. This implies
    $$[K'K'':K] = [k':k] = [K':K] = [K'':K]$$
    and thus $K'K''= K'$ and thus $K' = K'K'' = K''$. This establishes injectivity of the correspondence.
    
    (2) is `obvious'.
    
    (3) Suppose $K'/K$ is unramified and Galois. As an element of $K'/K$ preserves valuation, it preserves $\mc O_{K'}$ and $\mf p'$ (valuation ring and maximal ideal of $K'$). Thus a map on $k'$ is induced and there is a homomorphism $\Gal(K'/K) \rightarrow \Gal(k'/k)$. We are to establish that this is an isomorphism, and 
\end{proof}

\begin{proposition}
The maximal unramified extension of a local field is obtained by adjoining all $m$th roots of unity where $(p,m)=1$.
\end{proposition}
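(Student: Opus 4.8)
The plan is to read off the statement from the previous proposition, using Hensel's lemma to supply unramifiedness and the cyclicity of $\FF_{q^d}^\times$ to supply residue fields. Write $q=|k|$ for the residue field of $K$ and fix a separable closure $K^{\on{sep}}\supseteq K$; since finite fields are perfect, the induced extension of residue fields is an algebraic closure $\overline k\supseteq k$. For $m$ with $(p,m)=1$, adjoining all $m$th roots of unity to $K$ produces $K(\zeta_m)$ with $\zeta_m$ a primitive $m$th root (one primitive root generates the rest), and adjoining them for all such $m$ produces $\bigcup_{(p,m)=1}K(\zeta_m)$; so, writing $K^{\on{unr}}$ for the maximal unramified extension, what must be shown is
$$K^{\on{unr}}=\bigcup_{(p,m)=1}K(\zeta_m).$$
Applying the previous proposition to the algebraic extension $K^{\on{sep}}/K$ matches, compatibly with inclusions (parts (1) and (2)), the finite unramified subextensions of $K^{\on{sep}}/K$ with the finite subextensions of $\overline k/k$; passing to unions, $K^{\on{unr}}$ corresponds to $\overline k=\bigcup_{n\ge1}\FF_{q^n}$.

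The first step is to check that $K(\zeta_m)/K$ is finite and unramified whenever $(p,m)=1$. Finiteness is clear. For unramifiedness, let $k'/k$ be the splitting field of $x^m-1\in k[x]$ and let $K'/K$ be the finite unramified extension with residue field $k'$ furnished by part (1) of the previous proposition. Since $(p,m)=1$ the polynomial $x^m-1$ is separable, hence splits over $k'$ into $m$ distinct linear factors; Hensel's lemma in the complete ring $\mc O_{K'}$ lifts each of these roots, so $x^m-1$ splits completely over $K'$. Its $m$ roots in $K'$ form a cyclic group of order $m$, hence include a primitive $m$th root of unity, so $K(\zeta_m)\subseteq K'$. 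As $K'/K$ is unramified and ramification indices are at least $1$ and multiplicative in towers (residue extensions of finite fields being automatically separable), $K(\zeta_m)/K$ is unramified. Taking the union over $m$ gives $\bigcup_{(p,m)=1}K(\zeta_m)\subseteq K^{\on{unr}}$.

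For the reverse inclusion, let $M/K$ be an arbitrary finite unramified extension, with residue field $\FF_{q^d}$. Put $m:=q^d-1$, so $(p,m)=1$. The $m$th roots of unity in $\FF_{q^d}$ are all of $\FF_{q^d}^\times$, which is cyclic of order $m$, so a primitive $m$th root of unity generates $\FF_{q^d}$ over $\FF_q=k$. On the other hand $K(\zeta_m)/K$ is unramified by the previous step, and its residue field $k_m$ contains $m$ distinct $m$th roots of unity (again by separability of $x^m-1$ and Hensel), hence a primitive one; therefore $\FF_{q^d}\subseteq k_m$. By part (2) of the previous proposition this forces $M\subseteq K(\zeta_m)$. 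As $M$ was arbitrary, $K^{\on{unr}}=\bigcup_M M\subseteq\bigcup_{(p,m)=1}K(\zeta_m)$, and combining the two inclusions finishes the proof.

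The one step I expect to require care is the unramifiedness of $K(\zeta_m)/K$ for $p\nmid m$: the content is to transport the elementary separability of $x^m-1$ modulo $\frp$ up to the local field. Routing this through the residue-field correspondence and Hensel's lemma, as above, avoids analyzing the minimal polynomial of $\zeta_m$ over $K$ directly, whose reduction need not be irreducible.
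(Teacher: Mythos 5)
The paper itself contains no proof of this proposition --- it is one of the facts deferred to Milne's \emph{Algebraic Number Theory} --- so there is no internal argument to compare against; judged on its own, your proof is correct, and it is essentially the standard route the reference intends, fitting naturally into this paper since it uses exactly the unramified-extension/residue-field correspondence proved just above together with Hensel's lemma. One cosmetic remark: in the reverse inclusion, the distinctness of the reductions $\bar\zeta_m^{\,i}$ in the residue field of $K(\zeta_m)$ follows already from reducing the factorization $x^m-1=\prod_{i}(x-\zeta_m^{i})$ and the separability of $x^m-1$ modulo the maximal ideal; the additional appeal to Hensel at that point is superfluous (Hensel is only needed in the forward direction, where you use it correctly).
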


We refer the readers to J.S. Milne's book on Algebraic Number Theory for proofs of the above assertions.

\begin{proposition}\label{local solvable}
For a finite Galois extension $L/K$ of local fields, $\Gal(L/K)$ is solvable.
\end{proposition}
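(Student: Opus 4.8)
The plan is to exhibit a subnormal series of $G:=\Gal(L/K)$ with abelian successive quotients, which is exactly what solvability means; the relevant series is the \emph{ramification filtration in the lower numbering}. Set $G_{-1}:=G$ and let $G_0\trianglelefteq G$ be the inertia subgroup, i.e.\ the kernel of the surjection $G\surjto\Gal(\ell/k)$ from the second Proposition above. Then $G_{-1}/G_0\cong\Gal(\ell/k)$ is cyclic, since $\ell/k$ is a finite extension of finite fields. So it suffices to filter $G_0$ by normal subgroups of $G$ with abelian layers down to the identity.

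Let $L_0:=L^{G_0}$ be the inertia field, so $L/L_0$ is totally ramified and hence $\mathcal O_L=\mathcal O_{L_0}[\pi]$ for any uniformizer $\pi$ of $L$ (a standard fact about totally ramified extensions of complete discrete valuation rings). For $i\ge 0$ define
$$G_i:=\{\sigma\in G : \sigma \text{ acts trivially on }\mathcal O_L/\mf m_L^{\,i+1}\}=\{\sigma\in G_0 : v_L(\sigma\pi-\pi)\ge i+1\},$$
the two descriptions agreeing because, for $\sigma\in G_0$ and $x=\sum_j a_j\pi^j$ with $a_j\in\mathcal O_{L_0}$ (which $\sigma$ fixes), the difference $\sigma x-x=\sum_j a_j\big((\sigma\pi)^j-\pi^j\big)$ is divisible by $\sigma\pi-\pi$. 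Each $G_i$ is a subgroup, and it is \emph{normal in $G$}: every element of $G$ preserves $v_L$ by the Lemma above and therefore stabilizes $\mf m_L^{\,i+1}$, so the condition defining $G_i$ is conjugation-invariant. Moreover $G_{i+1}\subseteq G_i$, and $\bigcap_i G_i=1$, since a $\sigma$ fixing $\pi$ modulo every $\mf m_L^{\,i+1}$ fixes $\pi$ (as $\bigcap_i\mf m_L^{\,i}=0$) and hence is the identity on $L=L_0(\pi)$; as $G$ is finite, $G_N=1$ for some $N$.

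The heart of the argument is computing the layers. For $\sigma\in G_0$ the ratio $\sigma\pi/\pi$ is a unit, and $\sigma\mapsto\overline{\sigma\pi/\pi}\in\ell^\times$ is a group homomorphism with kernel exactly $G_1$, so $G_0/G_1$ embeds in the cyclic group $\ell^\times$. For $i\ge 1$ and $\sigma\in G_i$ one has $\sigma\pi-\pi\in\mf m_L^{\,i+1}$, and $\sigma\mapsto\overline{(\sigma\pi-\pi)/\pi^{\,i+1}}\in(\ell,+)$ is a homomorphism with kernel $G_{i+1}$, so $G_i/G_{i+1}$ embeds in the additive group of $\ell$ and is elementary abelian. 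Assembling the pieces,
$$G=G_{-1}\ \trianglerighteq\ G_0\ \trianglerighteq\ G_1\ \trianglerighteq\ \cdots\ \trianglerighteq\ G_N=1$$
is a chain of normal subgroups of $G$ with abelian successive quotients, whence $G$ is solvable.

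The only non-formal points — the ``main obstacle'' — are the homomorphism claims, namely that $\sigma\mapsto\overline{\sigma\pi/\pi}$ and $\sigma\mapsto\overline{(\sigma\pi-\pi)/\pi^{\,i+1}}$ are group homomorphisms with the stated kernels, together with the monogenicity $\mathcal O_L=\mathcal O_{L_0}[\pi]$; both are routine from the definitions and the fact that $\sigma\in G_0$ fixes $\mathcal O_{L_0}$ pointwise, and are carried out in detail in \cite{serre_lf} (Ch.~IV). It is worth noting that finiteness of the residue field is used only to know that $\Gal(\ell/k)$ is cyclic: the filtration of $G_0$, and hence the solvability of the inertia subgroup, goes through for any finite Galois extension of complete discrete valuation rings with separable residue extension.
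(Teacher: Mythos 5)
Your proof is correct, and it takes a more explicit route than the paper's. The paper quotes the structural facts wholesale: it splits $\Gal(L/K)$ via the two exact sequences involving the inertia group and its tame/wild pieces, cites that $\Gal(\ell/k)$ is cyclic, $I^{\on{tame}}_{L/K}$ is abelian and $I^{\on{wild}}_{L/K}$ is a (pro-)$p$-group, and then invokes closure of solvability under extensions (together with solvability of finite $p$-groups). You instead build the whole lower-numbering ramification filtration $G \trianglerighteq G_0 \trianglerighteq G_1 \trianglerighteq \cdots \trianglerighteq G_N = 1$ and verify the abelian layers directly, embedding $G_0/G_1$ into $\ell^\times$ and $G_i/G_{i+1}$ ($i \ge 1$) into $(\ell,+)$; in effect your $G_1$ is the paper's wild inertia and $G_0/G_1$ its tame quotient, but you refine the wild part into elementary abelian layers rather than treating it as a black-box $p$-group. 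What this buys: a self-contained argument needing no facts about nilpotent or $p$-groups and no ``solvable-by-solvable'' lemma, and indeed a slightly stronger conclusion, since all the $G_i$ are normal in $G$ itself, so you get a genuine normal series with abelian quotients. The cost is the extra input $\mc O_L = \mc O_{L_0}[\pi]$ for the totally ramified extension $L/L_0$ and the two homomorphism computations, which you reasonably defer to Serre; that is no heavier a reliance on the literature than the paper's own citations. Your closing observation is also accurate: finiteness of the residue field enters only to make $\Gal(\ell/k)$ cyclic (abelian would do for solvability), while the filtration argument for the inertia group works for any complete discretely valued field with separable residue extension.
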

\begin{proof}
Denoting by $\ell,k$ the residue fields of $L$ and $K$, we have exact sequences
\begin{align*}
& 0 \ra I_{L/K} \ra \Gal(L/K) \ra \Gal(\ell / k) \\
& 0 \ra I_{L/K}^{\on{wild}} \ra I_{L/K} \ra I_{L/K}^{\on{tame}} \ra 0
\end{align*}
Here $\Gal(\ell/k)$ is cyclic, $I_{L/K}^{\on{tame}}$ is abelian, $I_{L/K}^{\on{wild}}$ is a pro-$p$ group. If $H \trianglelefteq G$ and $G/H$ are both solvable, then $G$ is solvable, and therefore we deduce that $\Gal(L/K)$ is solvable.
\end{proof}

\subsection{Hom and Tensor}

We recall some algebraic facts regarding $\Hom_R(A,B)$ and $A \otimes_R B$.

\begin{proposition}
    Suppose $M,N$ are left $R$-modules. Then
    \begin{enumerate}
        \item If $M$ is $(R,S)$-bimodule, $\Hom_R(M,N)$ is a \emph{left} $S$-module.
        \item If $N$ is $(R,S)$-bimodule, $\Hom_R(M,N)$ is a \emph{right} $S$-module.
    \end{enumerate}
\end{proposition}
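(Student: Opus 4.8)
The plan is to exhibit the two module structures by explicit formulas and then verify the module axioms directly; this is a routine bookkeeping check, and the only point requiring genuine care is keeping track of which side the new scalars act on, since feeding an element into the argument of $f$ is a contravariant operation (so it swaps left and right) whereas multiplying the output of $f$ is covariant (so it preserves the side).

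For (1), suppose $M$ is an $(R,S)$-bimodule, so $M$ is a left $R$-module and a right $S$-module with $(rm)s = r(ms)$ for all $r \in R$, $s \in S$, $m \in M$. For $s \in S$ and $f \in \Hom_R(M,N)$ I would define $s \cdot f$ by $(s \cdot f)(m) = f(ms)$. The first thing to check is that $s \cdot f$ is again $R$-linear, and this is where the bimodule compatibility enters: $(s \cdot f)(rm) = f((rm)s) = f(r(ms)) = r\,f(ms) = r\,(s \cdot f)(m)$, the middle equality being the compatibility and the third the $R$-linearity of $f$. Then one verifies the left-$S$-module axioms on $\Hom_R(M,N)$. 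Additivity in $f$ and in $s$ and the identity $1 \cdot f = f$ are immediate; the key identity is $(s_1 s_2) \cdot f = s_1 \cdot (s_2 \cdot f)$, which holds because $\big((s_1 s_2) \cdot f\big)(m) = f\big(m(s_1 s_2)\big) = f\big((m s_1)s_2\big) = (s_2 \cdot f)(m s_1) = \big(s_1 \cdot (s_2 \cdot f)\big)(m)$. Notice how the right $S$-action on $M$ has produced a left $S$-action on $\Hom_R(M,N)$: this is exactly the contravariance mentioned above.

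For (2), suppose instead $N$ is an $(R,S)$-bimodule, a left $R$-module and right $S$-module with $(rn)s = r(ns)$. For $f \in \Hom_R(M,N)$ and $s \in S$ I would define $f \cdot s$ by $(f \cdot s)(m) = f(m)s$. Here the $R$-linearity of $f \cdot s$ uses the compatibility on $N$: $(f \cdot s)(rm) = f(rm)\,s = \big(r\,f(m)\big)s = r\big(f(m)s\big) = r\,(f \cdot s)(m)$. The right-$S$-module axioms are then checked as before; the main one, $f \cdot (s_1 s_2) = (f \cdot s_1)\cdot s_2$, holds because $\big(f \cdot (s_1 s_2)\big)(m) = f(m)(s_1 s_2) = \big(f(m)s_1\big)s_2 = \big((f \cdot s_1)\cdot s_2\big)(m)$, and additivity together with $f \cdot 1 = f$ is immediate. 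Since the $S$-scalar is applied after $f$, the side is preserved and we get a right module.

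I do not expect a real obstacle here: the statement is essentially a definitional exercise. If anything, the one thing to be careful about is not conflating the two constructions — in (1) the new scalar is fed into the \emph{argument} of $f$ and the sides flip, while in (2) it multiplies the \emph{output} of $f$ and the side is preserved — and confirming at each step that it is precisely the relevant bimodule compatibility (on $M$ in the first case, on $N$ in the second) that licenses the $R$-linearity of the newly constructed map.
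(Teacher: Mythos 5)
Your proof is correct and is the standard (essentially unique) argument: the actions $(s\cdot f)(m)=f(ms)$ and $(f\cdot s)(m)=f(m)s$, with the bimodule compatibility $r(ms)=(rm)s$ guaranteeing $R$-linearity and the contravariance in the argument explaining the left/right flip. The paper states this proposition without proof, so there is nothing to contrast; your verification fills in exactly the routine check the paper leaves implicit.
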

Note that a $(R,S)$-bimodule satisfies compatibiity between two actions: $r(ms) = (rm)s$.

\begin{proposition}\label{hom tensor distributive}
Suppose $A_i, B_j$ are $R$-modules indexed by $i \in I, j \in J$. Then
\begin{align*}
& \Hom_R(\bigoplus_{i \in I} A_i , B) \cong \prod_{i \in I} \Hom_R (A_i, B) \\
& \Hom_R(A,\prod_{j \in J}B_j) \cong \prod_{j \in J} \Hom_R(A, B_j) \\
& (\bigoplus_{i \in I} A_i ) \otimes_R ( \bigoplus_{j \in J} B_j ) \cong \bigoplus_{i \in I} \bigoplus_{j \in J} (A_i \otimes_R B_j)
\end{align*}
\end{proposition}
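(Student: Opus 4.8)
The plan is to verify each of the three isomorphisms by exhibiting an explicit map together with its inverse, relying throughout on the universal properties of the direct sum (coproduct) and direct product in the category of $R$-modules. All three are instances of the general principle that $\Hom$ turns colimits in its first variable into limits, preserves limits in its second variable, and $\otimes$ preserves colimits in each variable.

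For the first isomorphism I would define $\Phi\colon \Hom_R(\bigoplus_{i} A_i, B) \ra \prod_{i} \Hom_R(A_i, B)$ by $\Phi(f) = (f \circ \iota_i)_{i \in I}$, where $\iota_i \colon A_i \injto \bigoplus_i A_i$ is the canonical inclusion. Conversely, given a family $(f_i)_{i \in I}$ with $f_i \in \Hom_R(A_i, B)$, the universal property of the coproduct yields a unique $R$-linear $f \colon \bigoplus_i A_i \ra B$ with $f \circ \iota_i = f_i$; concretely $f((a_i)_i) = \sum_i f_i(a_i)$, a finite sum since almost all $a_i$ vanish. One then checks these assignments are mutually inverse, and that they respect whatever $S$-module structure is present (cf.\ the bimodule proposition above), which is routine bookkeeping. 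For the second isomorphism I would dually define $\Psi\colon \Hom_R(A, \prod_j B_j) \ra \prod_j \Hom_R(A, B_j)$ by $\Psi(g) = (\pi_j \circ g)_{j \in J}$ with $\pi_j$ the projections; the universal property of the product produces, for each family $(g_j)_j$, a unique $g$ with $\pi_j \circ g = g_j$, namely $g(a) = (g_j(a))_j$, and again the two assignments are mutually inverse.

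For the third isomorphism I would invoke that $- \otimes_R N$ is a left adjoint (to the appropriate $\Hom$ functor) and hence preserves all colimits, in particular direct sums, giving $(\bigoplus_i A_i) \otimes_R N \cong \bigoplus_i (A_i \otimes_R N)$; applying this again in the second variable and combining yields the stated formula. If an elementary argument is preferred, define $\Theta\colon (\bigoplus_i A_i) \otimes_R (\bigoplus_j B_j) \ra \bigoplus_{i,j} (A_i \otimes_R B_j)$ as the linear map induced by the $R$-balanced map $((a_i)_i, (b_j)_j) \mapsto (a_i \otimes b_j)_{i,j}$, and build the inverse from the maps $A_i \otimes_R B_j \ra (\bigoplus_i A_i) \otimes_R (\bigoplus_j B_j)$ coming from the inclusions, assembled via the universal property of $\bigoplus$. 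Checking that $\Theta$ and its candidate inverse compose to the identity on elementary tensors — hence everywhere, since these generate — completes the proof.

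The one genuinely delicate point, and the step I would be most careful with, is the well-definedness of any map \emph{out of} a tensor product in part (3): such a map must be obtained from an $R$-balanced (bilinear) map on the product via the universal property of $\otimes_R$, not simply declared on elementary tensors. A secondary bit of care is needed in tracking the side on which any $S$-action lives, so that each of the three isomorphisms is asserted as an isomorphism of $S$-modules rather than merely of abelian groups; but this adds no real difficulty once the underlying $R$-linear isomorphisms are in hand.
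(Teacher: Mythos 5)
Your proof is correct: the standard argument via the universal properties of coproduct, product, and tensor product, with the maps $f \mapsto (f\circ\iota_i)_i$, $g \mapsto (\pi_j\circ g)_j$, and the balanced-map construction for the tensor case, together with your caution about defining maps out of a tensor product only through $R$-balanced maps. The paper states this proposition as a recalled background fact and gives no proof of its own, so there is nothing to contrast with; your argument is exactly the expected one and fills that gap adequately.
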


\begin{proposition}[Adjoint Associativity] \label{adjoint associativity}
Suppose $A_1$ is a left $R_1$-module, $A_2$ is a $(R_2,R_1)$-bimodule, and $A_3$ is a left $R_2$-module. Then
\begin{align*}
& \Hom_{R_1}(A_1 , \Hom_{R_2}(A_2, A_3)) \cong \Hom_{R_2}(A_2 \otimes_{R_1} A_1 , A_3) \\
& f \mapsto( a \otimes b \mapsto f(b)(a))
\end{align*}
\end{proposition}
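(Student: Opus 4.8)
The plan is to construct the map explicitly in both directions and check that the assignments are well-defined $R_2$-module homomorphisms, mutually inverse, and natural. First I would verify that the formula $f \mapsto \Phi(f)$, where $\Phi(f)(a \otimes b) = f(b)(a)$, actually lands in the right place. Given $f \in \Hom_{R_1}(A_1, \Hom_{R_2}(A_2, A_3))$, the expression $(a,b) \mapsto f(b)(a)$ is $\ZZ$-bilinear, and to descend to the tensor product $A_2 \otimes_{R_1} A_1$ one must check $R_1$-balance: $(ar, b) \mapsto f(b)(ar)$ versus $(a, rb) \mapsto f(rb)(a) = (r \cdot f(b))(a)$. Here I would unwind the right $R_1$-module structure on $\Hom_{R_2}(A_2, A_3)$ coming from the $(R_2, R_1)$-bimodule structure on $A_2$ (as in the preceding Proposition), namely $(\psi r)(a) = \psi(ar)$, so that $f(rb) = r \cdot f(b)$ becomes the identity needed. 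Then $\Phi(f)$ is a well-defined abelian-group homomorphism $A_2 \otimes_{R_1} A_1 \to A_3$, and I would check it is $R_2$-linear using that each $f(b) \in \Hom_{R_2}(A_2, A_3)$ and that the $R_2$-action on the tensor product is through the $A_2$ factor.

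Next I would produce the inverse. Given $g \in \Hom_{R_2}(A_2 \otimes_{R_1} A_1, A_3)$, define $\Psi(g) \in \Hom_{R_1}(A_1, \Hom_{R_2}(A_2, A_3))$ by $\Psi(g)(b) = (a \mapsto g(a \otimes b))$. One checks that $a \mapsto g(a \otimes b)$ is $R_2$-linear (again because the $R_2$-action on the tensor product is carried by $A_2$ and $g$ is $R_2$-linear), so $\Psi(g)(b)$ genuinely lies in $\Hom_{R_2}(A_2, A_3)$; then that $b \mapsto \Psi(g)(b)$ is additive and $R_1$-linear, where $R_1$-linearity uses the balancing relation $g((ar) \otimes b) = g(a \otimes (rb))$ to match the right $R_1$-action on $\Hom_{R_2}(A_2, A_3)$. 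Finally $\Phi$ and $\Psi$ are visibly mutually inverse on generators: $\Psi(\Phi(f))(b)(a) = \Phi(f)(a \otimes b) = f(b)(a)$ and $\Phi(\Psi(g))(a \otimes b) = \Psi(g)(b)(a) = g(a \otimes b)$, the latter extending from simple tensors by additivity.

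The genuine obstacle here is purely bookkeeping: keeping the left/right module conventions straight so that every ``balance'' check matches a module-structure axiom rather than appearing to fail. In particular one must be careful that the right $R_1$-action on $\Hom_{R_2}(A_2, A_3)$ is the one induced (via Proposition on bimodules) from the left $R_1$-factor of the $(R_2, R_1)$-bimodule $A_2$, and that $A_2 \otimes_{R_1} A_1$ is formed using $A_2$ as a \emph{right} $R_1$-module; with those conventions fixed, each verification is a one-line rewrite. I would also note that $\Phi$ and $\Psi$ are natural in all three arguments, but since the statement only asserts an isomorphism of $R_2$-modules I would confine the writeup to well-definedness, $R_2$-linearity, and mutual inversion, treating naturality as a remark.
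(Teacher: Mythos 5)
Your proof is correct; note that the paper itself states Proposition~\ref{adjoint associativity} without proof, so there is nothing to compare against --- the explicit construction of $\Phi(f)(a\otimes b)=f(b)(a)$ and its inverse $\Psi(g)(b)=(a\mapsto g(a\otimes b))$, with the balance check $f(b)(ar)=f(rb)(a)$ and verification of mutual inversion on simple tensors, is exactly the standard argument the paper implicitly defers to. One small bookkeeping remark: under the paper's conventions (an $(R_2,R_1)$-bimodule is a left $R_2$-, right $R_1$-module, and the preceding Proposition makes $\Hom_{R_2}(A_2,A_3)$ a \emph{left} $R_1$-module via $(r\cdot\psi)(a)=\psi(ar)$), the induced $R_1$-action you invoke should be called a left action, not a right one; since the defining formula you actually use is the same, this is a naming slip only and nothing in your verification breaks. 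You are also right that $A_2\otimes_{R_1}A_1$ is formed with $A_2$ as a right $R_1$-module, which is what makes the balancing relation line up with $R_1$-linearity of $f$.
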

There is a simpler-looking relation involving right modules for which we don't switch order of the modules, but we will be using left modules mostly.

\begin{proposition}
Given an exact sequence of $R$-modules
$$0 \ra A \ra B \ra C \ra 0$$
the following are exact:
\begin{align*}
0 \ra &\Hom_R(M,A) \ra \Hom_R(M,B) \ra \Hom_R(M,C) \\
0 \ra &\Hom_R(C,M) \ra \Hom_R(B,M) \ra \Hom_R(A,M) \\
&M \otimes_R A \ra M \otimes_R B \ra M \otimes_R C \ra 0
\end{align*}
\end{proposition}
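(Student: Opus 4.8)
The plan is to verify each of the three exactness statements separately, using homomorphism-chasing for the two $\Hom$ assertions and the universal property of the tensor product for the last. Write the given sequence as $0 \to A \xrightarrow{f} B \xrightarrow{g} C \to 0$.

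For the first sequence, apply $\Hom_R(M,-)$, giving $f_* = (\phi \mapsto f \circ \phi)$ and $g_* = (\phi \mapsto g \circ \phi)$. Injectivity of $f_*$ is immediate from injectivity of $f$: if $f\circ\phi = 0$ then $\phi=0$ since $\phi$ is determined pointwise. The inclusion $\Im f_* \subseteq \ker g_*$ follows from $g\circ f=0$. For the reverse inclusion, suppose $\psi\colon M\to B$ satisfies $g\circ\psi=0$; then $\psi(M)\subseteq\ker g=\Im f$, and since $f$ is injective each $\psi(m)$ has a unique $f$-preimage, giving a set map $\phi\colon M\to A$ with $f\circ\phi=\psi$. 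One checks $\phi$ is $R$-linear because $f$ is injective and $R$-linear, so $\psi=f_*(\phi)$.

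For the second sequence, apply the contravariant functor $\Hom_R(-,M)$, giving $g^* = (\phi \mapsto \phi\circ g)$ and $f^* = (\phi \mapsto \phi\circ f)$. Injectivity of $g^*$ is immediate from surjectivity of $g$; the inclusion $\Im g^*\subseteq\ker f^*$ again follows from $g\circ f=0$. Conversely, if $\psi\colon B\to M$ kills $\Im f = \ker g$, then $\psi$ is constant on the fibers of $g$, so setting $\phi(c)=\psi(b)$ for any $b$ with $g(b)=c$ gives a well-defined $R$-linear map $\phi\colon C\to M$ with $\phi\circ g=\psi$; hence $\psi=g^*(\phi)$.

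For the third sequence, apply $M\otimes_R-$, giving $1\otimes f$ and $1\otimes g$. Surjectivity of $1\otimes g$ is clear since the simple tensors $m\otimes c$ generate $M\otimes_R C$ and each $c$ equals $g(b)$ for some $b$; the inclusion $\Im(1\otimes f)\subseteq\ker(1\otimes g)$ follows from $g\circ f=0$. The substantive point — and the step I expect to be the main obstacle — is the reverse inclusion, since one cannot chase individual elements inside a tensor product. I would instead set $D=\Im(1\otimes f)$, observe that $1\otimes g$ factors through a surjection $\bar g\colon (M\otimes_R B)/D\to M\otimes_R C$, and build a two-sided inverse. The inverse comes from the $R$-balanced map $M\times C\to (M\otimes_R B)/D$ sending $(m,c)$ to the class of $m\otimes b$ for any chosen $b$ with $g(b)=c$; well-definedness is precisely the statement that altering $b$ by an element of $\ker g=\Im f$ alters $m\otimes b$ by an element of $D$, and $R$-bilinearity is routine. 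By the universal property this descends to a homomorphism $M\otimes_R C\to(M\otimes_R B)/D$ inverting $\bar g$, so $\ker(1\otimes g)=D=\Im(1\otimes f)$. (Alternatively, right-exactness of $M\otimes_R-$ can be deduced formally from Adjoint Associativity, since a functor admitting a right adjoint preserves cokernels.)
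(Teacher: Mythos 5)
Your proof is correct in all three parts: the diagram-chase arguments for $\Hom_R(M,-)$ and $\Hom_R(-,M)$ are complete, and for right-exactness of $M\otimes_R-$ you correctly identify that one cannot chase elements in a tensor product and instead build the inverse map $M\otimes_R C\to(M\otimes_R B)/\Im(1\otimes f)$ from an $R$-balanced map, with well-definedness resting exactly on $\ker g=\Im f$. The paper states this proposition as recalled background and gives no proof at all, so there is nothing to compare against; your argument is the standard one, and the parenthetical alternative (right-exactness from the tensor--Hom adjunction of Proposition~\ref{adjoint associativity}, since a left adjoint preserves cokernels) is also valid.
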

In the above proposition, right-exactness was lost for functor $\Hom_R(M,-)$ and $\Hom_R(-,M)$ and left-exactness was lost for functor $M \otimes_R$. If $M$ preserves exactness in each situation, $M$ is respectively referred to as \emph{projective, injective,} and \emph{flat} modules.

\begin{proposition}\label{projective from direct sum}
If $\bigoplus_{i \in I} P_i$ is projective, then each $P_i$ is projective.
\end{proposition}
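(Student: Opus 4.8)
The plan is to work directly from the definition of projectivity recorded above: a module $P$ is projective precisely when, for every short exact sequence $0 \ra A \ra B \ra C \ra 0$, the induced map $\Hom_R(P,B) \ra \Hom_R(P,C)$ is surjective (left-exactness of $0 \ra \Hom_R(P,A) \ra \Hom_R(P,B) \ra \Hom_R(P,C)$ being automatic from the previous proposition). Equivalently, given any surjection $\pi\colon B \surjto C$ and any homomorphism $f\colon P \ra C$, there is a lift $\tilde f\colon P \ra B$ with $\pi \circ \tilde f = f$. So fix an index $i_0 \in I$, write $P := \bigoplus_{i \in I} P_i$, and verify this lifting property for $P_{i_0}$ using the assumed projectivity of $P$.

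Concretely, I would let $\iota\colon P_{i_0} \injto P$ be the canonical inclusion and $p\colon P \surjto P_{i_0}$ the canonical projection, so that $p \circ \iota = \Id_{P_{i_0}}$. Given a surjection $\pi\colon B \surjto C$ and a homomorphism $f\colon P_{i_0} \ra C$, form the composite $f \circ p\colon P \ra C$; since $P$ is projective, there is $g\colon P \ra B$ with $\pi \circ g = f \circ p$. Then $\tilde f := g \circ \iota\colon P_{i_0} \ra B$ satisfies $\pi \circ \tilde f = \pi \circ g \circ \iota = f \circ p \circ \iota = f$, so $f$ lifts. As $\pi$ and $f$ were arbitrary, $\Hom_R(P_{i_0}, B) \ra \Hom_R(P_{i_0}, C)$ is surjective for every surjection $B \surjto C$, i.e., $P_{i_0}$ is projective.

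There is essentially no obstacle here; the only point needing a moment's care is the translation between the two formulations of projectivity — exactness of $\Hom_R(P,-)$ versus the lifting-of-homomorphisms property — together with the observation that the identity $p \circ \iota = \Id$ relating the canonical maps into and out of a direct sum is exactly what lets one transport a lifting problem for $P_{i_0}$ to one for the whole sum and back. (Alternatively, one could invoke the fact that a projective module is a direct summand of a free module, so that $P$ a summand of a free $F$ forces each $P_i$ to be a summand of $F$ too; but that characterization has not been established in the text, so the direct argument above is preferable.)
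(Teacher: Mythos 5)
Your proof is correct. It differs in packaging from the paper's: the paper applies $\Hom_R(\bigoplus_i P_i,-)$ to a surjection $A \surjto B$, invokes the isomorphism $\Hom_R(\bigoplus_i P_i, M) \cong \prod_i \Hom_R(P_i,M)$ of Proposition~\ref{hom tensor distributive}, and reads off surjectivity of each factor $\Hom_R(P_i,A) \surjto \Hom_R(P_i,B)$ from surjectivity of the product map; you instead solve the lifting problem directly, transporting a map $f\colon P_{i_0}\ra C$ up to the whole sum via the projection $p$, lifting there by projectivity of $\bigoplus_i P_i$, and pulling back along the inclusion $\iota$, using $p\circ\iota = \Id$. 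The two arguments rest on the same underlying splitting of the summand, but yours is slightly more general: it shows that any retract (direct summand) of a projective module is projective, without needing the full direct-sum decomposition or the Hom-distributivity isomorphism, while the paper's version is shorter given that Proposition~\ref{hom tensor distributive} is already on record. Your translation between ``$\Hom_R(P,-)$ preserves surjections'' and the lifting property is also handled correctly, so there is no gap.
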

\begin{proof}
If $A \twoheadrightarrow B$, then $\Hom_R (\bigoplus_i P_i, A) \twoheadrightarrow \Hom_R (\bigoplus_i P_i, B)$, then by Proposition~\ref{hom tensor distributive}, $\prod_i \Hom_R (P_i, A) \twoheadrightarrow \prod_i \Hom_R (P_i, B)$, and thus $\Hom_R (P_i,A) \twoheadrightarrow \Hom_R (P_i,B)$. Thus each $P_i$ is projective.
\end{proof}

\begin{proposition}
A free module is projective.
\end{proposition}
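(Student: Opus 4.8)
The plan is to unwind the definition of ``free'': a free $R$-module is by definition isomorphic to $\bigoplus_{i \in I} R$ for some index set $I$, so it suffices to show that $R$ itself is projective and that a direct sum of projectives is projective (the latter being the converse-flavored companion of Proposition~\ref{projective from direct sum}). First I would treat the single copy $R$: there is a natural isomorphism $\Hom_R(R, M) \cong M$ given by $\varphi \mapsto \varphi(1)$, so the functor $\Hom_R(R, -)$ is naturally isomorphic to the identity functor on $R$-modules. In particular, if $A \surjto B$, then the induced map $\Hom_R(R, A) \to \Hom_R(R, B)$ is identified with $A \to B$ itself, which is surjective; hence $R$ is projective.

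Next I would pass to an arbitrary free module $F \cong \bigoplus_{i \in I} R$. Given a surjection $A \surjto B$, Proposition~\ref{hom tensor distributive} gives $\Hom_R(F, A) \cong \prod_{i \in I} \Hom_R(R, A) \cong \prod_{i \in I} A$ and likewise $\Hom_R(F, B) \cong \prod_{i \in I} B$, and these identifications are compatible with the map induced by $A \to B$ (both isomorphisms are just restriction along the canonical inclusions $R \injto F$, which is natural in the module argument). So the map $\Hom_R(F, A) \to \Hom_R(F, B)$ is identified with the product map $\prod_{i \in I} A \to \prod_{i \in I} B$, and this is surjective because one can lift any element of $\prod_{i \in I} B$ coordinate by coordinate using surjectivity of $A \to B$. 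Therefore $\Hom_R(F, -)$ carries surjections to surjections, i.e., $F$ is projective. (One can also phrase the whole thing concretely: choose a basis $\{e_i\}_{i \in I}$ of $F$, lift each $f(e_i) \in B$ to some $a_i \in A$, and extend $R$-linearly by the universal property of the free module to get a section; the $\Hom$-computation above is just the functorial repackaging of this.)

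I do not expect a genuine obstacle here — the argument is entirely formal. The only point deserving a moment's attention is the naturality claim used in the second paragraph, namely that the isomorphisms of Proposition~\ref{hom tensor distributive} in the variable being mapped are compatible with the morphism $A \to B$; this is immediate from the explicit description of those isomorphisms, and it is exactly the same bookkeeping already performed in the proof of Proposition~\ref{projective from direct sum}, run in the other direction.
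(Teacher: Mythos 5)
Your argument is correct, and it proves the same statement by what is ultimately the same mechanism, just packaged functorially. The paper's proof is the one-line concrete version you relegate to your final parenthesis: given a surjection $f\colon A \surjto B$ and a map $\varphi\colon \bigoplus_{i\in I} R \to B$, lift each $\varphi(e_i)$ through $f$ arbitrarily and extend $R$-linearly using the universal property of the free module. Your main route instead factors this through the natural isomorphisms $\Hom_R(R,-)\cong \on{id}$ and $\Hom_R(\bigoplus_{i\in I} R, -)\cong \prod_{i\in I}\Hom_R(R,-)$ from Proposition~\ref{hom tensor distributive}, reducing projectivity of a free module to surjectivity of a product of copies of $A \to B$; the naturality point you flag is indeed the only thing to check, and it holds for the reasons you give. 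What the functorial version buys is a cleaner parallel with the proof of Proposition~\ref{projective from direct sum} (it is literally that argument run in reverse, with the product decomposition used to assemble rather than to project); what the paper's version buys is brevity, since the coordinate-by-coordinate lifting in your product argument is exactly the basis-element lifting anyway. Either way the content is identical and there is no gap.
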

\begin{proof}
When there is a surjection $f: B \rightarrow C$ and $\varphi: \bigoplus R \rightarrow C$, then we can construct $\tilde \varphi: \bigoplus R \rightarrow B$ by lifting images of basis elements under $\varphi$ arbitrarily through $f$.
\end{proof}

\begin{proposition}
A free module is flat.
\end{proposition}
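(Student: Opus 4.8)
The plan is to reduce everything to two facts already established: that $M \otimes_R -$ is always right-exact, and that the tensor product commutes with arbitrary direct sums (Proposition~\ref{hom tensor distributive}). A free module has the form $F = \bigoplus_{i \in I} R$, so for any $R$-module $A$ we have $F \otimes_R A \cong \bigoplus_{i \in I} (R \otimes_R A)$. The evaluation map $R \otimes_R A \to A$, $r \otimes a \mapsto ra$, is an isomorphism with inverse $a \mapsto 1 \otimes a$ (one checks both composites are the identity directly), so $F \otimes_R A \cong \bigoplus_{i \in I} A$, and this identification is natural in $A$.

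Next I would verify that, under this identification, the map $F \otimes_R f$ induced by an injection $f : A \injto B$ becomes the map $\bigoplus_{i \in I} A \to \bigoplus_{i \in I} B$ that applies $f$ in each coordinate. That coordinatewise map is injective: an element of $\bigoplus_{i \in I} A$ has only finitely many nonzero coordinates $a_i$, and if all the images $f(a_i)$ vanish then each $a_i$ vanishes since $f$ is injective. Combining this injectivity on the left with the right-exactness of $F \otimes_R -$ shows that $F \otimes_R -$ preserves short exact sequences, i.e.\ $F$ is flat.

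The only point requiring care is the naturality of $F \otimes_R A \cong \bigoplus_{i \in I} A$ — concretely, checking that the square relating $F \otimes_R A \to F \otimes_R B$ to $\bigoplus_i A \to \bigoplus_i B$ commutes — but this follows at once from the explicit formulas for the isomorphism in Proposition~\ref{hom tensor distributive} and for $R \otimes_R A \cong A$. I do not expect a genuine obstacle here; the argument is essentially bookkeeping with direct sums.
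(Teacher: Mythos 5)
Your argument is correct and is essentially the paper's own proof: both use the distributivity $F \otimes_R A \cong \bigoplus_{i \in I} A$ from Proposition~\ref{hom tensor distributive} together with the injectivity of the coordinatewise map $\bigoplus_i A \to \bigoplus_i B$. You merely spell out the naturality and right-exactness bookkeeping that the paper leaves implicit.
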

\begin{proof}
If $A \hookrightarrow B$, then $A \otimes_R (\bigoplus_{i \in I} R) \cong \bigoplus_{i \in I} A \hookrightarrow \bigoplus_{i \in I} B \cong B \otimes_R (\bigoplus_{i \in I} R)$.
\end{proof}

\begin{proposition}
For a $R$-module $P$, the following are equivalent:
\begin{enumerate}
\item $P$ is a projective module
\item Any surjection $A \rightarrow P \rightarrow 0$ splits
\item $P$ is a direct summand of a free module
\end{enumerate}
\end{proposition}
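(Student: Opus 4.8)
I would prove the three statements equivalent by establishing the cycle $(1)\Rightarrow(2)\Rightarrow(3)\Rightarrow(1)$, using the earlier results that free modules are projective and that direct summands of projectives are projective (Proposition~\ref{projective from direct sum}).

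For $(1)\Rightarrow(2)$: given a surjection $f\colon A\surjto P$, apply the defining lifting property of a projective module to the identity map $\Id\colon P\ra P$. Since $f$ is surjective, projectivity of $P$ produces $s\colon P\ra A$ with $f\circ s=\Id_P$. Such an $s$ is a section of $f$, so the surjection splits; concretely $A\cong P\oplus\ker f$ via $a\mapsto(s(f(a)),\,a-s(f(a)))$.

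For $(2)\Rightarrow(3)$: choose any generating set $\{p_i\}_{i\in I}$ of $P$ (for instance all of $P$) and let $F=\bigoplus_{i\in I}R$ be the free module on that set, with the canonical surjection $\varphi\colon F\surjto P$ sending the $i$th basis vector to $p_i$. By hypothesis $(2)$ this surjection splits, so by the observation in the previous step $F\cong P\oplus\ker\varphi$, exhibiting $P$ as a direct summand of the free module $F$.

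For $(3)\Rightarrow(1)$: write $F\cong P\oplus Q$ with $F$ free. A free module is projective (proven above), and by Proposition~\ref{projective from direct sum} every direct summand of a projective module is projective; hence $P$ is projective. I do not expect a genuine obstacle here — the only points requiring a word of care are the existence of a free module surjecting onto an arbitrary $P$ and the explicit identification of a split surjection with a direct-sum decomposition, both of which are routine.
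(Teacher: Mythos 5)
Your proposal is correct and follows essentially the same cycle $(1)\Rightarrow(2)\Rightarrow(3)\Rightarrow(1)$ as the paper: lifting the identity map to split a surjection, presenting $P$ as a quotient of a free module on generators, and invoking Proposition~\ref{projective from direct sum} together with projectivity of free modules for the last implication. The explicit decomposition $A\cong P\oplus\ker f$ you record is a harmless (and slightly more detailed) addition.
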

\begin{proof}
(1 $\implies$ 2) If $P$ is projective and $A \rightarrow P$ surjects, then we can construct a lift of the identity map $P \rightarrow P$ and obtain $P \rightarrow A$ whose composition with $A \rightarrow P$ is $\on{id}_P$. This is a splitting.

(2 $\implies$ 3) Construct a surjection $\bigoplus R \rightarrow P$ by taking generators of $P$ and apply the hypothesis.

(3 $\implies$ 1) If $P$ is a direct summand of a free module, then by Proposition~\ref{projective from direct sum}, $P$ is projective.
\end{proof}

\begin{proposition}
A projective module is flat.
\end{proposition}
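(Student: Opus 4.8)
The plan is to reduce everything to two facts already in hand: free modules are flat, and a projective module is a direct summand of a free module. So let $P$ be a projective $R$-module, and fix a module $Q$ and a free module $F$ with $F \cong P \oplus Q$. Given any injection of $R$-modules $A \injto B$, the goal is to show that $P \otimes_R A \ra P \otimes_R B$ is injective.

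First I would invoke flatness of $F$: applying $F \otimes_R -$ to $A \injto B$ yields an injection $F \otimes_R A \injto F \otimes_R B$. Next I would use Proposition~\ref{hom tensor distributive} to identify $F \otimes_R A \cong (P \otimes_R A) \oplus (Q \otimes_R A)$ and likewise $F \otimes_R B \cong (P \otimes_R B) \oplus (Q \otimes_R B)$. Because this isomorphism is natural in the second tensor factor, under these identifications the injection $F \otimes_R A \injto F \otimes_R B$ becomes the direct sum of the two maps $P \otimes_R A \ra P \otimes_R B$ and $Q \otimes_R A \ra Q \otimes_R B$. A direct sum of homomorphisms that is injective must be injective on each summand (an element of the first summand mapping to zero is a zero element of the whole direct sum, hence zero), so $P \otimes_R A \ra P \otimes_R B$ is injective. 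As $A \injto B$ was arbitrary, $P$ is flat.

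The only point requiring any care is the naturality claim — that the square relating the map $F \otimes_R A \ra F \otimes_R B$ to the two direct-sum decompositions commutes — but this is immediate by checking on elementary tensors, where both composites send $(p,q) \otimes a$ to $(p \otimes \varphi(a), q \otimes \varphi(a))$ for the given map $\varphi\colon A \ra B$. Everything else is formal, so I do not expect a genuine obstacle here; this is essentially a bookkeeping argument assembling the prior propositions.
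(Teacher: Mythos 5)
Your argument is correct and is essentially the paper's own proof: write $P$ as a direct summand of a free module, use flatness of the free module, and restrict to the summand. You simply spell out the naturality and summand-injectivity details that the paper's terser version leaves implicit.
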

\begin{proof}
Suppose $P$ is a projective $R$-module and suppose $P \oplus M$ is a free module. Then $\otimes_R (P \oplus M)$ is left-exact, and thus $\otimes_R P$ is left-exact.
\end{proof}

\begin{proposition}
Any $R$-module has a free (and thus projective) resolution.
\end{proposition}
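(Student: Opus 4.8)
The plan is to build the resolution one step at a time, repeatedly invoking the fact that every $R$-module is a quotient of a free module. First I would isolate that building block: given any $R$-module $N$, pick a generating set $S$ (for concreteness one may take $S = N$), form the free module $F = \bigoplus_{s \in S} R$, and define $\varphi \colon F \to N$ by sending the basis vector $e_s$ to $s$. Since $S$ generates $N$, the map $\varphi$ is surjective. This is the only genuinely new ingredient; the rest is bookkeeping.

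Next I would set up the induction. Applying the building block to $M$ gives a surjection $\varepsilon \colon F_0 \surjto M$; set $K_0 = \ker \varepsilon$. Suppose inductively that for some $n \geq 0$ we have free modules $F_0, \dots, F_n$ and maps forming an exact sequence $F_n \to F_{n-1} \to \cdots \to F_0 \to M \to 0$, and let $K_n = \ker(F_n \to F_{n-1})$ with the convention $F_{-1} = M$ and $F_0 \to F_{-1}$ being $\varepsilon$. Apply the building block to $K_n$ to get a free module $F_{n+1}$ with a surjection $\pi_{n+1} \colon F_{n+1} \surjto K_n$, and let $d_{n+1} \colon F_{n+1} \to F_n$ be the composite of $\pi_{n+1}$ with the inclusion $K_n \injto F_n$.

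Finally I would verify exactness of the extended sequence at $F_n$: the image of $d_{n+1}$ equals $K_n$ by construction of $\pi_{n+1}$, and $K_n$ is by definition the kernel of $F_n \to F_{n-1}$, so exactness holds at that spot. Iterating produces an exact complex $\cdots \to F_2 \to F_1 \to F_0 \to M \to 0$ with every $F_i$ free, i.e.\ a free resolution of $M$; since every free module is projective by the proposition above, this is in particular a projective resolution. There is no real obstacle here — the only thing to watch is the uniformity of the induction, which the convention $F_{-1} = M$ handles by making the base case and inductive step identical in form.
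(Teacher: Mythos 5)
Your proof is correct and is essentially the paper's own argument: surject a free module onto $M$, then repeatedly surject a free module onto the kernel at the previous stage, splicing these together into an exact complex. The extra bookkeeping with $K_n$ and the convention $F_{-1}=M$ just makes explicit what the paper's inductive construction leaves implicit.
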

\begin{proof}
For a $R$-module $M$, take its set of generators to get a surjection $\bigoplus_{i \in I_1} R \rightarrow M \rightarrow 0$. Now take kernel of this surjection and apply the same process to get $\bigoplus_{i \in I_2} R \ra \bigoplus_{i \in I_1} R \rightarrow M \rightarrow 0$, and continue inductively.
\end{proof}

\begin{proposition}
A module $I$ is injective iff $\Hom_R(-,I)$ is an exact functor on any exact sequence.

A module $P$ is projective iff $\Hom_R(P,-)$ is an exact functor on any exact sequence.
\end{proposition}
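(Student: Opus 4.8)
The plan is to notice that in each biconditional one direction is essentially free and the other requires a short bootstrap. For the first statement: if $\Hom_R(-,I)$ is exact on every exact sequence, then in particular it sends short exact sequences to short exact sequences, which is precisely the definition of $I$ being injective. So the content is the converse. Here I would first invoke the earlier proposition that $\Hom_R(-,I)$ is always left exact, so that "$I$ is injective" is the same as "$\Hom_R(-,I)$ turns every short exact sequence $0 \to A \to B \to C \to 0$ into a short exact sequence $0 \to \Hom_R(C,I) \to \Hom_R(B,I) \to \Hom_R(A,I) \to 0$." The remaining job is to promote "exact on short exact sequences" to "exact on all exact sequences."

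The device for that promotion is splicing. Given an exact sequence $\cdots \xrightarrow{d_{n+1}} A_{n+1} \xrightarrow{d_n} A_n \xrightarrow{d_{n-1}} A_{n-1} \to \cdots$, I would set $Z_n := \ker d_n = \on{im} d_{n+1} \subseteq A_n$, observe that each $d_n$ factors as $A_n \surjto Z_{n-1} \injto A_{n-1}$, and extract the short exact sequences $0 \to Z_n \to A_n \to Z_{n-1} \to 0$. Applying $\Hom_R(-,I)$ and using injectivity of $I$, each of these becomes a short exact sequence $0 \to \Hom_R(Z_{n-1},I) \to \Hom_R(A_n,I) \to \Hom_R(Z_n,I) \to 0$. (The resulting $\Hom_R(A_\bullet,I)$ is automatically a complex, since $\Hom_R(-,I)$ is a functor, so only exactness at each term needs checking.)

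Then I would glue the pieces back together. To see that $\Hom_R(A_{n-1},I) \xrightarrow{d_n^{*}} \Hom_R(A_n,I) \xrightarrow{d_{n+1}^{*}} \Hom_R(A_{n+1},I)$ is exact at the middle term, dualize the factorization $A_n \surjto Z_{n-1} \injto A_{n-1}$ to write $d_n^{*}$ as $\Hom_R(A_{n-1},I) \to \Hom_R(Z_{n-1},I) \to \Hom_R(A_n,I)$, where the first map is the restriction along $Z_{n-1} \injto A_{n-1}$, surjective because $I$ is injective, and the second is injective with image $\ker(\Hom_R(A_n,I) \to \Hom_R(Z_n,I))$ by the short exact sequence above; hence $\on{im} d_n^{*} = \ker(\Hom_R(A_n,I) \to \Hom_R(Z_n,I))$. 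Dually, $d_{n+1}^{*}$ factors as $\Hom_R(A_n,I) \to \Hom_R(Z_n,I) \to \Hom_R(A_{n+1},I)$ with the first map surjective and the second injective, so $\ker d_{n+1}^{*} = \ker(\Hom_R(A_n,I) \to \Hom_R(Z_n,I))$. Comparing, $\on{im} d_n^{*} = \ker d_{n+1}^{*}$, which is the required exactness.

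Finally, the projective statement is the formal dual: $\Hom_R(P,-)$ is always left exact, projectivity says it sends short exact sequences to short exact sequences, and the identical splice-and-glue argument with the covariant functor $\Hom_R(P,-)$ in place of the contravariant $\Hom_R(-,I)$ finishes it. I do not anticipate a genuine obstacle here; the only thing that needs care is the bookkeeping in the gluing step — keeping straight the factorization of each differential through $Z_{n-1}$, and tracking which of the induced maps on $\Hom$ are injective and which are surjective.
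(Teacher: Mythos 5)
Your proposal is correct and is essentially the paper's own argument in structural packaging: the paper proves exactness at $\Hom_R(A_n,I)$ by taking $f$ with $f\circ\alpha_{n+1}=0$, descending it to $\on{im}\alpha_n$ (your factorization of $d_n$ through $Z_{n-1}$, justified by the spliced short exact sequence) and then extending to $A_{n-1}$ by injectivity (your surjectivity of the restriction map), with the projective case handled dually in both treatments. The only blemish is the off-by-one in the arrow labels of your displayed sequence (the prose uses $d_n\colon A_n\to A_{n-1}$, $Z_n=\ker d_n\subseteq A_n$, which is the consistent convention); this is purely cosmetic and does not affect the argument.
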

\begin{proof}
Suppose we have exact sequence
$$A_m \ra A_{m-1} \ra \cdots \ra A_1$$
We want
$$\Hom(A_1, I)\ra \cdots \ra \Hom(A_m,I)$$
to be exact. This is equivalent to lifting $f: A_n \ra I$ to $\tilde f:A_{n-1} \ra I$ in
\[\begin{tikzcd}
A_{n-1} \arrow[dr, dashed] & A_n \arrow[l,"\alpha_n"] \arrow[d,"f"] & A_{n+1} \arrow[l,"\alpha_{n+1}"] \\
&I
\end{tikzcd}\]
where $f \circ \alpha_{n+1} = 0$. This is done by defining $\tilde f$ first on $\on{im} \alpha_n$ as $\tilde f (\alpha_n(t)) = f(t)$. This is shown to be well-defined, and by injectivity this map on $\on{im} \alpha_n$ lifts to $A_{n-1}$. We obtain the result for projective modules by dual argument (switching arrow directions).
\end{proof}

The following modules will occur frequently later.
\begin{definition}
For $H\le G$ a subgroup and $M$ a $H$-module, the coinduced module and induced module are respectively $G$-modules
$$\Hom_{\ZZ[H]}(\ZZ[G],M), \ZZ[G] \otimes_{\ZZ[H]} M$$
Coinduced module is sometimes written $\on{Ind}_H^G(M)$. When $H=1$, it's also written $\on{Ind}^G(M)$.
\end{definition}

Induced and coinduced modules are isomorphic when dealing with a subgroup of finite index:
\begin{proposition}
If $M$ is a $G$-module and $H\le G$ is a finite-index subgroup,
\begin{align*}
& \Hom_{\ZZ[H]} (\ZZ[G], M) \cong \ZZ[G]\otimes_{\ZZ[H]} M \\
& \varphi \mapsto \sum_{j=1}^n g_j \otimes \varphi(g_j^{-1})
\end{align*}
where $g_1, \cdots g_n$ are the distinct coset representatives of $H$ in $G$.
\end{proposition}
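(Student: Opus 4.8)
The plan is to reduce everything to a choice of coset representatives. I would fix $g_1,\dots,g_n$ with $G=\bigsqcup_{j=1}^n g_jH$; then $g_1^{-1},\dots,g_n^{-1}$ is a complete set of \emph{right} coset representatives, $G=\bigsqcup_{j=1}^n Hg_j^{-1}$. Hence $\ZZ[G]=\bigoplus_j g_j\ZZ[H]$ is free as a right $\ZZ[H]$-module on $\{g_j\}$, while $\ZZ[G]=\bigoplus_j \ZZ[H]g_j^{-1}$ is free as a left $\ZZ[H]$-module on $\{g_j^{-1}\}$. Feeding these into Proposition~\ref{hom tensor distributive} gives canonical identifications $\ZZ[G]\otimes_{\ZZ[H]}M\cong\bigoplus_{j=1}^n M$, under which $\sum_j g_j\otimes m_j$ corresponds to $(m_j)_j$ and every element has a unique such expression, and $\Hom_{\ZZ[H]}(\ZZ[G],M)\cong\prod_{j=1}^n M$, under which $\varphi$ corresponds to $(\varphi(g_j^{-1}))_j$. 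Since $H$ has finite index the direct sum and the product coincide, so composing the second identification with the inverse of the first produces an isomorphism of abelian groups $\Hom_{\ZZ[H]}(\ZZ[G],M)\to\ZZ[G]\otimes_{\ZZ[H]}M$, and one checks at once that it is precisely the stated map $\varphi\mapsto\sum_j g_j\otimes\varphi(g_j^{-1})$; its inverse sends $\sum_j g_j\otimes m_j$ to the unique $\ZZ[H]$-linear map $\varphi$ with $\varphi(g_j^{-1})=m_j$. I would also record that the formula is independent of the chosen representatives: replacing $g_j$ by $g_jh$ with $h\in H$ changes the $j$-th term to $g_jh\otimes\varphi(h^{-1}g_j^{-1})=g_jh\otimes h^{-1}\varphi(g_j^{-1})=g_j\otimes\varphi(g_j^{-1})$, by left $\ZZ[H]$-linearity of $\varphi$ and the relation $xh\otimes m=x\otimes hm$ defining the tensor product over $\ZZ[H]$.

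It then remains to check that this bijection respects the $G$-actions, which (by the bimodule proposition recalled earlier, applied to $\ZZ[G]$ as a $(\ZZ[G],\ZZ[H])$- resp. $(\ZZ[H],\ZZ[G])$-bimodule) are $g\cdot(x\otimes m)=(gx)\otimes m$ on the induced module and $(g\cdot\varphi)(x)=\varphi(xg)$ on the coinduced module. The key point is that for $g\in G$ the elements $gg_1,\dots,gg_n$ again form a complete set of left coset representatives, so $gg_j=g_{\sigma(j)}h_j$ for a permutation $\sigma$ and $h_j\in H$; equivalently $h_jg_j^{-1}=g_{\sigma(j)}^{-1}g$. Then the computation
$$g\cdot\Big(\sum_j g_j\otimes\varphi(g_j^{-1})\Big)=\sum_j (gg_j)\otimes\varphi(g_j^{-1})=\sum_j g_{\sigma(j)}\otimes h_j\varphi(g_j^{-1})=\sum_j g_{\sigma(j)}\otimes\varphi(g_{\sigma(j)}^{-1}g)$$
reindexes (by $k=\sigma(j)$) to $\sum_k g_k\otimes(g\cdot\varphi)(g_k^{-1})$, i.e.\ the image of $g\cdot\varphi$. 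Hence the map is $\ZZ[G]$-linear, and being bijective it is the desired isomorphism of $G$-modules.

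The main obstacle, such as it is, is purely bookkeeping: the induced module is assembled from the \emph{right} $\ZZ[H]$-action on $\ZZ[G]$ while the coinduced module uses the \emph{left} action, so the two natural bases of coset representatives are related by inversion, and the $g_j^{-1}$ in the formula is exactly what reconciles them. The one step where care is genuinely required is the equivariance computation above: one must transport the data $gg_j=g_{\sigma(j)}h_j$ simultaneously through the relation $xh\otimes m=x\otimes hm$ in the tensor product and through $\varphi(hx)=h\varphi(x)$, and verify that they match after reindexing. Everything else is formal, reducing to the freeness of $\ZZ[G]$ over $\ZZ[H]$ and Proposition~\ref{hom tensor distributive}.
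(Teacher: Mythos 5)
Your proof is correct and follows essentially the same route as the paper's: decompose $\ZZ[G]$ into cosets, check independence of the chosen representatives via $g_jh\otimes\varphi(h^{-1}g_j^{-1})=g_j\otimes\varphi(g_j^{-1})$, and use the unique expression $\sum_j g_j\otimes m_j$ to exhibit the inverse map. The only difference is that you also verify $G$-equivariance of the bijection, a point the paper's proof leaves implicit; that is a useful addition rather than a different method.
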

\begin{proof}
The map is well-defined because if another coset representative $g_j h$ is chosen in place of $g_j$, then $g_j h \otimes \varphi((g_j h)^{-1}) = (g_j h) \otimes h^{-1} \varphi(g_j^{-1}) = g_j \otimes \varphi(g_j^{-1})$. Every element in $\ZZ[G] \otimes_{\ZZ[H]} M$ can be uniquely written as a sum $\sum_{k=1}^n g_k \otimes m_k$, and thus it is image of $\varphi$ where $\varphi$ is defined by $\varphi(g_k^{-1} h) = h^{-1} m_k$. This establishes surjectivity and injectivity.
\end{proof}

\subsection{Ext and Tor Functors}

Given a left exact additive functor $\mc F:\textbf{Mod}_R \rightarrow \mc C$  where $\mc C$ is an abelian category, we get \emph{derived functors} $\mc F_n$ such that whenever we are given
$$ 0 \ra A \ra B \ra C \ra 0$$
we get a long exact sequence
\begin{align*}
0 & \ra \mc F(A) \ra \mc F(B) \ra \mc F(C) \ra \\
& \ra \mc F_1(A) \ra \mc F_1(B) \ra \mc F_1(C) \ra \\
& \ra \mc F_2(A) \ra \mc F_2(B) \ra \mc F_2(C) \ra \cdots
\end{align*}
which precisely fills in the missing right-exactness of $\mc F$. Analogous theory exists for when $\mc F$ is right exact.

Ext is derived functor for $\Hom_R(A,-)$ and $\Hom_R(-,B)$ and Tor is derived functor for $\otimes_R B$.

\begin{definition}
Given module $A$, take its projective resolution:
$$\cdots \ra P_1 \ra P_0 \ra A \ra 0$$
and take functor $\Hom_R(-,B)$ to get a chain complex (disregard $\Hom_R(A,B)$ here)
$$0 \ra \Hom_R (P_0,B) \ra \Hom_R (P_1,B) \ra \cdots$$
Then $\Ext_R^n(A,B)$ is defined as the cohomology of this complex at $\Hom_R(P_n,B)$.

By taking functor $\otimes_R B$ we also get a chain complex
$$\cdots \ra P_1 \otimes_R B \ra P_0 \otimes_R B \ra 0$$
Then $\Tor_n^R(A,B)$ is defined as the homology of this complex at $P_n \otimes_R B$.
\end{definition}

We need to check several facts to ensure that Ext and Tor induce a (co)homology long exact sequence. The following summarizes the case for Ext.
\begin{enumerate}
\item (Functoriality of Ext $\&$ Independence from choice of lifts) Firstly $f: A \ra B$ lifts to projective resolution inductively:
\[ \begin{tikzcd}
\cdots \arrow[r] & P_1 \arrow[r]\arrow[d, dotted, "f_1"] &P_0 \arrow[r]\arrow[d, dotted, "f_0"] & A \arrow[r]\arrow[d, "f"] & 0 \\
\cdots \arrow[r] & P'_1 \arrow[r] & P'_0 \arrow[r] & B \arrow[r]& 0
\end{tikzcd}
\]
The composed map $P_0 \ra A \ra B$ is lifted to $P_0 \xrightarrow{f_0} P_0'$ by projectivity of $P_0$. Then $P_1 \ra P_0 \ra P_0'$ is lifts to $P_1 \xrightarrow{f_1} P_1'$ because $P_1 \ra P_0'$ is a map to $\on{ker}(P_0' \ra B) = \on{im}(P_1' \ra P_0')$. We continue this way.

We now use chain homotopy to show that the map of homology is independent of lifts $f_j$ chosen. When two lifts $f_j, f_j'$ are given, subtract them and consider homology induced by $f_j - f_j'$. It remains to show that homology induced by lift of a zero map $A \ra B$ is 0 on homologies. To do this we construct chain homotopy $s_n: P_n \ra P_n'$ with $sd + ds = f$. We let $s_{-1}:A \ra P_0'$ to be a zero map and $s_0$ lifts directly by projectivity. Construction of $s_1$ is done by lifting
\[
\begin{tikzcd}
& P_1 \arrow[dl, dashed, swap, "s_1"] \arrow[d, "f_1 - s_0 d_1"] \\
P_2' \arrow[r] & \on{im}d_2' \arrow[r] & 0
\end{tikzcd}
\]
and we can continue similarly.
\item (Independence from choice of resolution) We use functoriality: consider
\[ \begin{tikzcd}
P_\bullet \arrow[r]\arrow[d] & A \arrow[r]\arrow[d,"\text{id}"] & 0 \\
P'_\bullet \arrow[r]\arrow[d] & A \arrow[r]\arrow[d,"\text{id}"] & 0 \\
P_\bullet \arrow[r] & A \arrow[r]& 0
\end{tikzcd}
\]
\item (Homology long exact sequence for $\Hom_R (-,D)$) Given $0 \ra L \ra M \ra N \ra 0$, we are to construct long exact sequence $0 \ra \Hom_R (N,D) \ra \Hom_R(M,D) \ra \Hom_R(L,D) \ra \Ext_R^1(N,D) \ra \cdots$. To do this, we simply find an adequate short exact sequence of chain complex, from which a homology long exact sequence comes. Ideally, we have projective resolutions 
\begin{align*}
P_\bullet \ra L \ra 0\\
P_\bullet' \ra M \ra 0\\
P_\bullet'' \ra N \ra 0
\end{align*}
such that
\[
\begin{tikzcd}
& \vdots & \vdots & \vdots & \\
0 \arrow[r]& \Hom_R (P_1'',D) \arrow[r]\arrow[u]& \Hom_R (P_1',D) \arrow[r]\arrow[u]& \Hom_R (P_1,D) \arrow[r]\arrow[u] & 0 \\
0 \arrow[r]& \Hom_R (P_0'',D) \arrow[r]\arrow[u]& \Hom_R (P_0',D) \arrow[r]\arrow[u]& \Hom_R (P_0,D) \arrow[r]\arrow[u] & 0 \\
& 0\arrow[u] & 0\arrow[u] & 0\arrow[u] &
\end{tikzcd}
\]
with rows exact. To do this, we actually just take any resolutions $P_\bullet,P_\bullet''$ and then let $P_\bullet' = P_\bullet \oplus P_\bullet''$. Right exactness of rows are ensured by splitness of $0\ra P_\bullet'' \ra P_\bullet'' \oplus P_\bullet \ra P_\bullet \ra 0$.
\item (Homology long exact sequence for $\Hom_R (D,-)$) This time around, we are to find long exact sequence 
$$0 \ra \Hom_R (D,L) \ra \Hom_R (D,M) \ra \Hom_R (D,N) \ra \Ext_R^1(D,L) \ra \cdots$$
To do so, we work analogously as above, but we just find projective resolution for $D$ and apply $\Hom_R (-,L),\Hom_R (-,M),\Hom_R (-,N)$. The resulting sequence of chain complexes is exact due to projectivity of $P_n$, and as before we extract a homology long exact sequence.
\end{enumerate}

\begin{proposition}
\begin{align*}
& \Ext_R^n(\bigoplus_{i \in I} A_i, B) \cong \prod_{i \in I} \Ext_R^n(A_i, B) \\
& \Ext_R^n(A, \prod_{j \in J}B_j) \cong \prod_{j \in J} \Ext_R^n(A, B_j) \\
& \Tor_n^R(\bigoplus_{i \in I} A_i, \bigoplus_{j \in J} B_j) \cong \bigoplus_{i \in I} \bigoplus_{j \in J} \Tor_n^R(A_i, B_j)
\end{align*}
\end{proposition}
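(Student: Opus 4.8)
The plan is to prove all three isomorphisms by one recipe: choose a convenient projective resolution, apply the relevant additive functor ($\Hom_R(-,B)$, $\Hom_R(A,-)$, or $-\otimes_R B$), rewrite the resulting complex via Proposition~\ref{hom tensor distributive}, and then observe that forming cohomology (resp. homology) of a complex commutes with arbitrary products (resp. direct sums). This last step rests on the elementary fact that $\prod_{j\in J}$ and $\bigoplus_{i\in I}$ are exact additive functors on $R$-modules: an exact additive functor sends kernels to kernels and images to images, hence commutes with passing to the subquotient $H^n=\ker/\Im$.

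For the first isomorphism I would start from projective resolutions $P^{(i)}_\bullet\to A_i\to 0$ and form their direct sum $\bigoplus_{i\in I}P^{(i)}_\bullet\to\bigoplus_{i\in I}A_i\to 0$. This is still exact because direct sums of exact sequences are exact, and it is a resolution by \emph{projective} modules: each $P^{(i)}_n$ is a direct summand of a free module, so $\bigoplus_i P^{(i)}_n$ is a direct summand of a direct sum of free modules (again free), hence projective by the characterization ``projective $\iff$ direct summand of a free module''. Applying $\Hom_R(-,B)$ and the first isomorphism of Proposition~\ref{hom tensor distributive} identifies the cochain complex $\Hom_R(\bigoplus_i P^{(i)}_\bullet,B)$ with the product complex $\prod_i\Hom_R(P^{(i)}_\bullet,B)$ (the identification is natural, hence an isomorphism of cochain complexes, not merely of graded modules). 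Taking cohomology at spot $n$ and pulling $\prod_i$ out past $H^n$ yields $\Ext^n_R(\bigoplus_i A_i,B)\cong\prod_i\Ext^n_R(A_i,B)$.

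The second isomorphism is more direct: fix one projective resolution $P_\bullet\to A\to 0$, apply $\Hom_R(-,\prod_j B_j)$, and use the second isomorphism of Proposition~\ref{hom tensor distributive} to identify $\Hom_R(P_n,\prod_j B_j)$ with $\prod_j\Hom_R(P_n,B_j)$ compatibly with the differentials; then commute $H^n$ past $\prod_j$ as before. For the third isomorphism, take projective resolutions $P^{(i)}_\bullet\to A_i$, form the projective resolution $\bigoplus_i P^{(i)}_\bullet\to\bigoplus_i A_i$ exactly as above, apply $-\otimes_R(\bigoplus_j B_j)$, and use the third isomorphism of Proposition~\ref{hom tensor distributive} to identify the chain complex with $\bigoplus_i\bigoplus_j\big(P^{(i)}_\bullet\otimes_R B_j\big)$; taking homology and commuting $H_n$ past the direct sums gives $\Tor^R_n(\bigoplus_i A_i,\bigoplus_j B_j)\cong\bigoplus_{i,j}\Tor^R_n(A_i,B_j)$.

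The only genuinely nontrivial points, and hence where I expect the real work, are: (a) that direct sums of projective resolutions are still projective resolutions (needs exactness of $\bigoplus$ together with the free-summand description of projectives), and (b) that the distributivity isomorphisms of Proposition~\ref{hom tensor distributive} are natural enough to intertwine the differentials, so that they are isomorphisms of \emph{complexes}; granting these, ``cohomology commutes with products / homology commutes with direct sums'' is immediate from exactness of $\prod$ and $\bigoplus$. A small bookkeeping caveat: the paper's definition of $\Ext$ and $\Tor$ resolves only the \emph{first} variable, so in the second statement one must not try to resolve $\prod_j B_j$; resolving $A$ and distributing the $\Hom$ over the product, as above, sidesteps this entirely.
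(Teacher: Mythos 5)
Your proposal is correct and follows essentially the same route as the paper, whose proof is just the one-line remark that the statements follow from Proposition~\ref{hom tensor distributive}; you supply the standard details (summing projective resolutions, naturality of the distributivity isomorphisms at the level of complexes, and exactness of $\prod$ and $\bigoplus$) that the paper leaves implicit.
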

\begin{proof}
These follow from Proposition~\ref{hom tensor distributive}.
\end{proof}

\begin{proposition}
$A$ is projective $\iff$ $\forall B, \Ext_R^1(A,B)=0$ $\iff$ $\forall B, n \ge 1, \Ext_R^n(A,B)=0$.
\end{proposition}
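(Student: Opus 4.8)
The plan is to prove the cycle of implications
\[
A \text{ projective} \;\Longrightarrow\; \big(\forall B,\, n\ge 1,\ \Ext_R^n(A,B)=0\big) \;\Longrightarrow\; \big(\forall B,\ \Ext_R^1(A,B)=0\big) \;\Longrightarrow\; A \text{ projective},
\]
of which only the last implication carries real content.

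First, suppose $A$ is projective. Then $\cdots \ra 0 \ra 0 \ra A \xrightarrow{\Id} A \ra 0$ is a projective resolution of $A$, and since $\Ext_R^n(A,B)$ does not depend on the chosen resolution, we may compute it from this one: applying $\Hom_R(-,B)$ yields the complex $0 \ra \Hom_R(A,B) \ra 0 \ra 0 \ra \cdots$, whose cohomology in every degree $n\ge 1$ is zero. Hence $\Ext_R^n(A,B)=0$ for all $B$ and all $n \ge 1$. The second implication is immediate: specialize $n=1$.

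For the third implication, assume $\Ext_R^1(A,B)=0$ for every $B$. Since every $R$-module receives a surjection from a free module, choose a short exact sequence $0 \ra K \ra F \ra A \ra 0$ with $F$ free. Apply the long exact sequence for $\Hom_R(-,D)$ (constructed in the Ext discussion above) to this sequence, taking $D=K$:
\[
0 \ra \Hom_R(A,K) \ra \Hom_R(F,K) \ra \Hom_R(K,K) \ra \Ext_R^1(A,K) \ra \cdots .
\]
By hypothesis $\Ext_R^1(A,K)=0$, so $\Hom_R(F,K) \ra \Hom_R(K,K)$ is surjective; choose a preimage of $\Id_K$, that is, a retraction $r\colon F \ra K$ restricting to the identity on $K$. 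This splits the sequence, so $F \cong A \oplus K$, exhibiting $A$ as a direct summand of a free module. By the earlier characterization of projective modules (projective $\iff$ direct summand of a free module), $A$ is projective.

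The main (and only) subtlety is this last step, and the trick driving it is to plug the module $K$ itself into the second slot of $\Ext^1$, so that the identity map of $K$ becomes the element we lift; the one thing to be careful about is to invoke the long exact sequence in the contravariant variable $\Hom_R(-,K)$ rather than the covariant one.
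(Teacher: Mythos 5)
Your proof is correct, and the cycle of implications you chose establishes all three equivalences. Note that the paper actually states this proposition without proof, so there is nothing to compare against; your argument is the standard one and uses only tools already set up in the text (the length-zero projective resolution $0 \ra A \xrightarrow{\Id} A \ra 0$, the long exact sequence in the contravariant variable $\Hom_R(-,K)$ applied to a free presentation $0 \ra K \ra F \ra A \ra 0$, and the characterization of projectives as direct summands of free modules), so it would splice cleanly into the paper as the missing proof.
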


\begin{proposition}
$A$ is flat $\iff$ $\forall B, \Tor_1^R(A,B)=0$ $\iff$ $\forall B, n\ge 1, \Tor_n^R(A,B)=0$.
\end{proposition}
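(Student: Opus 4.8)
The plan is to establish the cycle of implications $(3)\Rightarrow(2)\Rightarrow(1)\Rightarrow(3)$, writing $(1)$ for flatness of $A$, $(2)$ for the vanishing of $\Tor_1^R(A,B)$ for all $B$, and $(3)$ for the vanishing of $\Tor_n^R(A,B)$ for all $B$ and all $n\ge 1$. The implication $(3)\Rightarrow(2)$ is trivial: specialize to $n=1$.

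For $(2)\Rightarrow(1)$, I would take an arbitrary short exact sequence $0 \ra B' \ra B \ra B'' \ra 0$ and look at the tail of the associated long exact $\Tor$-sequence,
$$\cdots \ra \Tor_1^R(A,B'') \ra A\otimes_R B' \ra A\otimes_R B \ra A\otimes_R B'' \ra 0.$$
Since $\Tor_1^R(A,B'')=0$ by hypothesis, the map $A\otimes_R B' \ra A\otimes_R B$ is injective; as $B'\hookrightarrow B$ was arbitrary, $A\otimes_R(-)$ preserves injections, i.e. $A$ is flat.

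For $(1)\Rightarrow(3)$, I would use dimension shifting in the variable $B$. First, $\Tor_n^R(A,R)=0$ for $n\ge 1$: computing $\Tor$ from a projective resolution $P_\bullet \ra A$, the complex $P_\bullet\otimes_R R$ is just $P_\bullet$, which is exact in positive degrees because $P_\bullet \ra A \ra 0$ is; the additivity isomorphism $\Tor_n^R(A,\bigoplus_j B_j)\cong \bigoplus_j \Tor_n^R(A,B_j)$ upgrades this to $\Tor_n^R(A,F)=0$ for every free module $F$ and $n\ge 1$. Now for a general $B$ pick a short exact sequence $0 \ra K \ra F \ra B \ra 0$ with $F$ free. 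The long exact sequence gives $\Tor_n^R(A,B)\cong \Tor_{n-1}^R(A,K)$ for $n\ge 2$, while at the bottom it reads $0 \ra \Tor_1^R(A,B) \ra A\otimes_R K \ra A\otimes_R F$, and the last arrow is injective exactly because $A$ is flat and $K\hookrightarrow F$; hence $\Tor_1^R(A,B)=0$ for all $B$. Feeding this back into the shift isomorphism and inducting on $n$ yields $(3)$.

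The one point that needs care is that every step of $(2)\Rightarrow(1)$ and $(1)\Rightarrow(3)$ invokes the long exact sequence of $\Tor$ in its \emph{second} argument, whereas $\Tor$ was set up as the derived functor of $(-)\otimes_R B$, i.e. by resolving the first argument. So I would first record the balancing fact that the derived functors of $A\otimes_R(-)$ (computed by resolving the second argument) coincide with $\Tor_\bullet^R$ — equivalently $\Tor_n^R(A,B)\cong \Tor_n^R(B,A)$ — which then makes the second-variable long exact sequence available. If one prefers to avoid balancing altogether, one can run the mirror-image argument and prove instead that $A$ is flat iff $\Tor_n^R(-,A)$ vanishes in positive degrees: $(2)\Rightarrow(1)$ then uses only the already-constructed first-variable long exact sequence, and $(1)\Rightarrow(3)$ becomes the remark that the exact functor $(-)\otimes_R A$ commutes with the homology of a projective resolution of $B$. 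I expect this balancing/symmetry bookkeeping to be the only real obstacle; once it is in place, all the homological steps are routine applications of the long exact sequence and the additivity of $\Tor$.
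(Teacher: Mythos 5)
Your argument is correct; note that the paper states this proposition without proof, so the comparison is only against the general toolkit it sets up. The one obstacle you single out --- availability of the long exact sequence of $\Tor$ in the \emph{second} variable when $\Tor_n^R(A,B)$ is defined by resolving $A$ --- does not actually require the balancing theorem in this setting: given a projective resolution $P_\bullet \ra A \ra 0$ and a short exact sequence $0 \ra B' \ra B \ra B'' \ra 0$, each row $0 \ra P_n\otimes_R B' \ra P_n\otimes_R B \ra P_n\otimes_R B'' \ra 0$ is exact because $P_n$ is projective, hence flat (proved earlier in the paper), so one gets a short exact sequence of complexes and the desired long exact sequence; Proposition~\ref{ext tor delta} already presupposes exactly this sequence (short exact sequence in the second slot, $A$ resolved) when it writes down the connecting homomorphism. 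With that in hand, your cycle $(3)\Rightarrow(2)\Rightarrow(1)\Rightarrow(3)$ goes through as written: $(2)\Rightarrow(1)$ from the tail of the long exact sequence, and $(1)\Rightarrow(3)$ from $\Tor_n^R(A,R)=0$, additivity of $\Tor$ (also stated in the paper), and dimension shifting along $0 \ra K \ra F \ra B \ra 0$. One caution about your proposed fallback: the ``mirror-image'' argument proves that $A$ is flat iff $\Tor_n^R(B,A)$ vanishes for all $B$ and $n\ge 1$, i.e.\ with $A$ in the unresolved slot; per the paper's definition of $\Tor$ that is literally a different group from $\Tor_n^R(A,B)$, so without invoking balancing it does not establish the stated proposition. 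The direct route via the second-variable long exact sequence is therefore the right one, and it needs nothing beyond what the paper has already proved.
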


The following exact sequences will be used frequently:
\begin{align*}
& 0 \ra I_G \ra \ZZ[G] \xrightarrow{\epsilon} \ZZ \ra 0 \\
& 0 \ra \ZZ \xrightarrow{\mu} \ZZ[G] \ra J_G \ra 0
\end{align*}
where $\epsilon: \sum_{g \in G} n_g g \mapsto \sum_{g \in G} n_g$ and $\mu: n \mapsto n \sum_{g \in G} g$ (defined only for a finite group $G$), and $I_G:= \ker \epsilon, J_G := \on{coker}\mu$.

\begin{proposition}
Both $I_G$ and $J_G$ are free modules and we have
$$I_G \oplus \ZZ \cong \ZZ[G] \cong J_G \oplus \ZZ$$
\end{proposition}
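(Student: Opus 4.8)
The plan is to prove everything at the level of abelian groups, i.e.\ of $\ZZ$-modules: I will show each of $I_G$ and $J_G$ is \emph{free as a $\ZZ$-module}, and that the asserted isomorphisms $I_G\oplus\ZZ\cong\ZZ[G]\cong J_G\oplus\ZZ$ are isomorphisms of abelian groups. (They are \emph{not} $\ZZ[G]$-module isomorphisms in general; see the last paragraph — so "free module" here has to mean free $\ZZ$-module.) The entire argument is then just a change of $\ZZ$-basis inside the free abelian group $\ZZ[G]$, whose standard basis is $G$ itself.

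For $I_G$, I claim $\{\,g-e : g\in G,\ g\ne e\,\}$ is a $\ZZ$-basis of $I_G$. Spanning: if $x=\sum_{g}n_g g\in I_G$ then $\sum_g n_g=\epsilon(x)=0$, so $x=\sum_g n_g g-(\sum_g n_g)e=\sum_{g\ne e}n_g(g-e)$. Independence: reading off, in a relation $\sum_{g\ne e}m_g(g-e)=0$, the coefficient of each $g\ne e$ in the standard basis forces $m_g=0$. Hence $I_G$ is $\ZZ$-free. Moreover $\{e\}\cup\{g-e:g\ne e\}$ is again a $\ZZ$-basis of $\ZZ[G]$ (since $g=(g-e)+e$), so $\ZZ[G]=\ZZ e\oplus I_G$; as $\epsilon$ restricts to an isomorphism $\ZZ e\xrightarrow{\ \sim\ }\ZZ$, this says $\ZZ[G]\cong\ZZ\oplus I_G$. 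This argument needs no finiteness of $G$; only the $J_G$ part below does.

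For $J_G$ (now $G$ finite, $N:=\sum_{g\in G}g$, so $\mu(1)=N$ and $\mu(\ZZ)=\ZZ N$), the right basis swap is $\{N\}\cup\{g:g\ne e\}$, which is a $\ZZ$-basis of $\ZZ[G]$ because $e=N-\sum_{g\ne e}g$. Thus $\ZZ[G]=\ZZ N\oplus F$ where $F$ is the free abelian group on $\{g:g\ne e\}$; since $J_G=\ZZ[G]/\mu(\ZZ)=\ZZ[G]/\ZZ N$, the projection identifies $J_G\cong F$, free of rank $|G|-1$, and $\ZZ[G]\cong\ZZ N\oplus J_G\cong\ZZ\oplus J_G$. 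Equivalently: the map $\sum_g n_g g\mapsto n_e$ is a $\ZZ$-linear retraction of $\mu$, splitting $0\to\ZZ\xrightarrow{\mu}\ZZ[G]\to J_G\to 0$ over $\ZZ$.

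There is essentially no obstacle here; the only point I want to flag is the one forcing the ``$\ZZ$-module'' reading. None of the above splittings can be made $G$-equivariant when $|G|>1$: a $G$-equivariant retraction $r\colon\ZZ[G]\to\ZZ$ of $\mu$ (trivial $G$-action on $\ZZ$) would satisfy $r(g)=r(e)=:c$ for all $g$, hence $r(N)=|G|\,c$, and $r(N)=1$ would require $|G|\,c=1$, impossible. So $I_G$ and $J_G$ are, in general, not free — indeed not even projective — as $\ZZ[G]$-modules, and the proposition is genuinely a statement about their underlying abelian groups, which is precisely what is needed for the dimension-shifting arguments later.
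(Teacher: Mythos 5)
Your proof is correct and follows essentially the same route as the paper: the paper's explicit maps $a \mapsto (a-\epsilon(a)\cdot 1,\ \epsilon(a))$ and $([a],n)\mapsto a+(n-a_1)\Nm_G$ are exactly the splittings you obtain by your change of $\ZZ$-basis, with the same resulting bases $\{g-1\}$ for $I_G$ and $\{[g]\}$ for $J_G$. Your added remark that these are $\ZZ$-module (not $\ZZ[G]$-module) isomorphisms is a correct reading and matches how the paper later uses the result (vanishing of $\Tor_1^{\ZZ}$).
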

\begin{proof}
Define map $\ZZ[G] \ra I_G \oplus \ZZ$ by $a \mapsto (a-\epsilon(a), \epsilon(a))$ and define map $J_G \oplus \ZZ \ra \ZZ[G]$ by $([a],n) \mapsto a + (n-a_1) \Nm_G$ where $a = \sum_{g\in G} a_g g$ and $\Nm_G = \sum_{g \in G} g$. These are easily checked to be isomorphisms. $I_G$ is free with generators $g-1$ and $J_G$ is free with generators $[g]$.
\end{proof}

\begin{proposition}
For $G$-module $M$, the following sequences are exact:
\begin{align*}
& 0 \ra I_G \otimes M \ra \ZZ[G] \otimes M \ra M \ra 0 \\
& 0 \ra M \ra \ZZ[G] \otimes M \ra J_G \otimes M \ra 0
\end{align*}
\end{proposition}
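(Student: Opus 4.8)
The plan is to obtain both sequences by applying the additive functor $-\otimes_\ZZ M$ to the two fundamental short exact sequences recalled just above,
\[
0 \ra I_G \ra \ZZ[G] \xrightarrow{\epsilon} \ZZ \ra 0, \qquad 0 \ra \ZZ \xrightarrow{\mu} \ZZ[G] \ra J_G \ra 0,
\]
and then using the canonical identification $\ZZ\otimes_\ZZ M\cong M$. Throughout, for $G$-modules $A,B$ the tensor product $A\otimes_\ZZ B$ carries the diagonal action $g(a\otimes b)=ga\otimes gb$; this is the meaning of the bare $\otimes$ in the statement, and with it $\ZZ\otimes_\ZZ M$ (for $\ZZ$ the trivial $G$-module) is exactly $M$.

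First I would record, citing the previous proposition, that the two fundamental sequences split in the category of abelian groups, since that proposition exhibits abelian-group isomorphisms $\ZZ[G]\cong I_G\oplus\ZZ$ and $\ZZ[G]\cong J_G\oplus\ZZ$ compatible with $\epsilon$ and $\mu$. An additive functor preserves finite biproducts, hence carries a split short exact sequence of abelian groups to a split short exact sequence of abelian groups; applying $-\otimes_\ZZ M$ therefore yields exactness of $0 \to I_G\otimes M \to \ZZ[G]\otimes M \to \ZZ\otimes M \to 0$ and of $0 \to \ZZ\otimes M \to \ZZ[G]\otimes M \to J_G\otimes M \to 0$ as sequences of abelian groups. (Equivalently one may skip the splitting: $\ZZ$ and $J_G$ are free $\ZZ$-modules, hence flat, so $\Tor_1^\ZZ(M,\ZZ)=\Tor_1^\ZZ(M,J_G)=0$ and the long exact $\Tor$ sequences give the same exactness directly.) Then I would identify the end term via the canonical isomorphism $\ZZ\otimes_\ZZ M\xrightarrow{\sim}M$, $n\otimes m\mapsto nm$.

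The only thing needing genuine care --- and the closest thing here to an obstacle --- is the bookkeeping of the $G$-action, because the splitting maps produced by the previous proposition are \emph{not} $\ZZ[G]$-linear (indeed $0\to I_G\to\ZZ[G]\to\ZZ\to0$ admits no $G$-equivariant splitting in general), so the step above only gives exactness of the underlying sequences of abelian groups. To upgrade this to exactness of sequences of $G$-modules I would observe that every structure map involved is of the form $\varphi\otimes\mathrm{id}_M$ for a $\ZZ[G]$-linear map $\varphi$ among $I_G,\ZZ[G],\ZZ,J_G$, and any such map is $G$-equivariant for the diagonal action since $(\varphi\otimes\mathrm{id}_M)(g(a\otimes m))=\varphi(ga)\otimes gm=g\varphi(a)\otimes gm=g\bigl((\varphi\otimes\mathrm{id}_M)(a\otimes m)\bigr)$; likewise $\ZZ\otimes_\ZZ M\cong M$ is $G$-equivariant because $G$ acts trivially on $\ZZ$. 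Since exactness of a complex of $G$-modules is the exactness of its underlying complex of abelian groups, this completes the argument, the two resulting sequences being exactly those asserted once $\ZZ\otimes M$ is identified with $M$.
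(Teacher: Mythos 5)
Your proposal is correct and follows essentially the same route as the paper: apply $\otimes_\ZZ M$ to the two fundamental sequences and note that exactness is preserved because $\ZZ$ and $J_G$ are free (hence flat) over $\ZZ$, so $\Tor_1^\ZZ$ vanishes — your parenthetical remark is exactly the paper's argument, and your primary splitting argument is a minor variant of it. Your additional care about the diagonal $G$-action and the $G$-equivariance of the maps is a welcome refinement the paper leaves implicit.
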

\begin{proof}
The first sequence is obtained from applying $\otimes_\ZZ M$ to 
$$0 \ra I_G \ra \ZZ[G] \ra \ZZ \ra 0$$
and it remains exact because $\Tor_1^\ZZ(\ZZ,M)=0$ as $\ZZ$ is free over $\ZZ$.

The second sequence is obtained from applying $\otimes_Z M$ to
$$0 \ra \ZZ \ra \ZZ[G] \ra J_G \ra 0$$
and it remains exact because $\Tor_1^\ZZ(J_G,M) = 0$ as $J_G$ is free over $\ZZ$.
\end{proof}

We give an explicit computation of the connecting homomorphism of long exact sequences associated to Ext and Tor.

\begin{proposition}\label{ext tor delta}
Given exact sequence
$$0 \ra L \ra M \ra N \ra 0$$
suppose Ext and Tor are defined using a projective resolution $P_\bullet \rightarrow A \ra 0$. Then the connecting homomorphisms of Ext and Tor are given by:
\begin{align*}
\delta: & \Ext_R^r(A,N) \ra \Ext_R^{r+1}(A,L) \\
& [\varphi] \mapsto [\d \tilde \varphi]
\end{align*}
where $\tilde \varphi \in \Hom_R(P_r,M)$ is a lift constructed using projectivity of $P_r$, and
\begin{align*}
\delta: & \Tor_r^R(A,N) \ra \Tor_{r-1}^R(A,L) \\
& \left[ \sum_p p \otimes a_p \right] \mapsto \left[ \sum_p \d p \otimes \tilde{a}_p \right]
\end{align*}
where $\tilde{a}_p$ is a lift of $a_p$ through surjection $M \ra N$.
\end{proposition}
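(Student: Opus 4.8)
The plan is to observe that both long exact sequences arise, via the usual connecting-homomorphism construction, from a short exact sequence of (co)chain complexes obtained by applying an \emph{exact} functor to $0 \ra L \ra M \ra N \ra 0$, and then simply to unwind that construction with the explicit representatives named in the statement.

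For Ext: each $P_r$ in the resolution is projective, so $\Hom_R(P_r,-)$ is an exact functor, and applying it to $0 \ra L \ra M \ra N \ra 0$ gives a short exact sequence of cochain complexes
$$0 \ra \Hom_R(P_\bullet,L) \ra \Hom_R(P_\bullet,M) \ra \Hom_R(P_\bullet,N) \ra 0,$$
whose cohomologies are exactly the groups $\Ext_R^\bullet(A,-)$ appearing here; this is precisely the construction of the Ext long exact sequence in the second variable recalled above. I would then run the standard diagram chase defining $\delta$. A class in $\Ext_R^r(A,N)$ is represented by a cocycle $\varphi \in \Hom_R(P_r,N)$, meaning $\varphi \circ \d_{r+1} = 0$, where $\d_{r+1}: P_{r+1} \ra P_r$ is the resolution differential. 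Using surjectivity of $\Hom_R(P_r,M) \ra \Hom_R(P_r,N)$ — which is exactly projectivity of $P_r$ — lift $\varphi$ to $\tilde\varphi \in \Hom_R(P_r,M)$. Apply the cochain differential: $\d\tilde\varphi = \tilde\varphi \circ \d_{r+1} \in \Hom_R(P_{r+1},M)$. Its image in $\Hom_R(P_{r+1},N)$ is $\varphi \circ \d_{r+1} = 0$, so $\d\tilde\varphi$ lies in the image of the subcomplex $\Hom_R(P_{r+1},L)$, and its class there is $\delta[\varphi]$, i.e. $\delta[\varphi] = [\d\tilde\varphi]$, as claimed.

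For Tor: each $P_r$ is projective, hence flat, so $P_r \otimes_R -$ is exact, and applying it to $0 \ra L \ra M \ra N \ra 0$ yields a short exact sequence of chain complexes
$$0 \ra P_\bullet \otimes_R L \ra P_\bullet \otimes_R M \ra P_\bullet \otimes_R N \ra 0$$
computing $\Tor_\bullet^R(A,-)$. The same recipe now reads: a class in $\Tor_r^R(A,N)$ is a cycle $\sum_p p \otimes a_p \in P_r \otimes_R N$; lift each $a_p$ through $M \ra N$ to $\tilde a_p$, obtaining $\sum_p p \otimes \tilde a_p \in P_r \otimes_R M$; apply $\d \otimes 1$ to get $\sum_p \d p \otimes \tilde a_p \in P_{r-1} \otimes_R M$; this maps to $\sum_p \d p \otimes a_p = (\d \otimes 1)\big(\sum_p p \otimes a_p\big) = 0$ in $P_{r-1} \otimes_R N$ since the original element is a cycle, so $\sum_p \d p \otimes \tilde a_p$ comes from $P_{r-1} \otimes_R L$, and its class there is $\delta$ of the original — exactly the stated formula.

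What remains are the verifications already built into the connecting-homomorphism construction: independence of the result from the chosen lifts $\tilde\varphi$, $\tilde a_p$ and from the chosen (co)cycle representative, and $R$-linearity of $\delta$. I would invoke these rather than redo them, since they were already needed to build the Ext and Tor long exact sequences above. I do not expect a genuine obstacle here — the proposition is a bookkeeping statement making the abstract connecting map explicit — and the only point requiring care is matching conventions for the differentials: $\varphi \mapsto \varphi \circ \d_{r+1}$ on $\Hom_R(P_\bullet,-)$, which raises cohomological degree, and $\d \otimes 1$ on $P_\bullet \otimes_R -$, which lowers homological degree.
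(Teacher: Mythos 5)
Your proof is correct and follows the same route the paper intends: the paper states this proposition without proof, but its earlier discussion of the long exact sequence for $\Hom_R(D,-)$ (apply $\Hom_R(P_\bullet,-)$ to $0\ra L\ra M\ra N\ra 0$, with exactness of the rows coming from projectivity of the $P_n$, and flatness for the tensor case) is exactly the short-exact-sequence-of-complexes setup you unwind. Your diagram chase correctly recovers the stated formulas for $\delta$ in both the Ext and Tor cases, and deferring well-definedness to the standard snake-lemma construction is appropriate.
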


\pagebreak
\section{Cohomology}

Here we introduce algebraic devices called group cohomology, homology, and Tate cohomology. Group cohomology is a type of Ext group, group homology is a type of Tor group, and Tate cohomology is a concatenation of group cohomology and homology.

\begin{definition}
For a group $G$, a module over group $G$ is a module over ring $\ZZ[G]$. In other words, there is an action $G \times M \rightarrow M$ that satisfies
\begin{align*}
& g \cdot (a+b) = g \cdot a + g \cdot b \\
& gh \cdot a = g \cdot (h \cdot a)
\end{align*}
\end{definition}

It's natural to consider a maximal submodule fixed by $G$ and a maximal quotient module fixed by $G$. These are called \emph{invariant} and \emph{coinvariant} modules: $M^G$ and $M_G$, defined as
\begin{align*}
& M^G := \{a \in M | \forall g \in G, g \cdot a = a\} \\
& M_G := M / I_G M
\end{align*}
It's evident that $M^G$ is the maximal submodule fixed by $G$. $M_G$ is the maximal quotient module fixed by $G$ because if $M/A$ is fixed by $G$, then $\forall a \in M, g \in G, g \cdot a - a \in A \iff \forall a, g, (g-1) a \in A \iff I_G M \subseteq A$.

\subsection{Group (co)homology and Tate cohomology}

Group cohomology and homology study when invariant and coinvariant parts associated to an exact sequence retains exactness. To each exact sequence 
\begin{align*}
0 \ra A \ra B \ra C \ra 0
\end{align*}
we get exact sequences:
\begin{align*}
& 0 \ra A^G \ra B^G \ra C^G \ra H^1(G,A) \ra H^1(G,B) \ra H^1(G,C) \ra H^2(G,A) \ra \cdots \\
& \cdots \ra H_2(G,C) \ra H_1(G,A) \ra H_1(G,B) \ra H_1(G,C) \ra A_G \ra B_G \ra C_G \ra 0
\end{align*}
so that if we have $H^1(G,A)=0$ then the sequence of invariant modules is exact and if we have $H_1(G,C)=0$ then the sequence of coinvariant modules is exact.

In fact, group cohomology and homology are respectively Ext and Tor functors, as we will see:
$$H^n(G,A) := \Ext_{\ZZ[G]}^n(\ZZ,A), H_n(G,A) := \Tor^{\ZZ[G]}_n(\ZZ,A)$$
In other words, we can study invariant and coinvariant modules using Hom and $\otimes$:
\begin{proposition}
Let $G$ act on $\ZZ$ trivially. Then,
\begin{align*}
& M^G \cong \Hom_{\ZZ[G]}(\ZZ,M) \text{ by } m \mapsto (1 \mapsto m) \\
& M_G \cong \ZZ \otimes_{\ZZ[G]}M \text{ by } [m] \mapsto 1 \otimes m 
\end{align*}
\end{proposition}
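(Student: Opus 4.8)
The plan is to verify both isomorphisms directly by exhibiting explicit mutually inverse maps, taking care that $\ZZ$ is regarded as a trivial module on the appropriate side.

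For the first isomorphism, a $\ZZ[G]$-module homomorphism $\varphi \colon \ZZ \to M$ is completely determined by $m := \varphi(1)$, since $\varphi(n) = nm$ for all $n \in \ZZ$ by additivity. The requirement that $\varphi$ be $\ZZ[G]$-linear forces, for each $g \in G$, that $g \cdot m = g \cdot \varphi(1) = \varphi(g \cdot 1) = \varphi(1) = m$, because $G$ acts trivially on $\ZZ$; hence $m \in M^G$. Conversely, for $m \in M^G$ the additive map $n \mapsto nm$ is $\ZZ[G]$-linear: since $g \cdot (n \cdot 1) = n \cdot 1$ maps to $n(g \cdot m) = nm$, which is the image of $n \cdot 1$. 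Thus $m \mapsto (1 \mapsto m)$ and $\varphi \mapsto \varphi(1)$ are mutually inverse homomorphisms of abelian groups, giving $M^G \cong \Hom_{\ZZ[G]}(\ZZ, M)$.

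For the second isomorphism, I would apply $- \otimes_{\ZZ[G]} M$ to the exact sequence $0 \ra I_G \ra \ZZ[G] \xrightarrow{\epsilon} \ZZ \ra 0$, obtaining by right-exactness of the tensor product an exact sequence $I_G \otimes_{\ZZ[G]} M \ra \ZZ[G] \otimes_{\ZZ[G]} M \ra \ZZ \otimes_{\ZZ[G]} M \ra 0$. Identifying $\ZZ[G] \otimes_{\ZZ[G]} M \cong M$ via $a \otimes m \mapsto am$, the image of $I_G \otimes_{\ZZ[G]} M$ is precisely $I_G M$, so $\ZZ \otimes_{\ZZ[G]} M \cong M / I_G M = M_G$, and tracing $1 \otimes m$ through these identifications shows the map is $[m] \mapsto 1 \otimes m$. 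Alternatively, and more hands-on: the map $M \ra \ZZ \otimes_{\ZZ[G]} M$, $m \mapsto 1 \otimes m$, is surjective and kills $I_G M$ since $1 \otimes (g-1)m = (1 \cdot g) \otimes m - 1 \otimes m = 1 \otimes m - 1 \otimes m = 0$ using the trivial right action of $G$ on $\ZZ$; and an inverse $\ZZ \otimes_{\ZZ[G]} M \ra M_G$ is induced by the $\ZZ[G]$-balanced map $(n,m) \mapsto n[m]$, balancedness amounting to $n[gm] = n[m]$ in $M_G$.

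There is no serious obstacle; the computation is routine. The one point requiring care is bookkeeping of sides: in $\ZZ \otimes_{\ZZ[G]} M$ the left factor is a \emph{right} $\ZZ[G]$-module with trivial action, and it is the relation $1 \otimes gm = 1 \otimes m$ arising from that trivial action which collapses the tensor product onto $M_G$; this must not be conflated with the left-module structure on $\ZZ$ used in $\Hom_{\ZZ[G]}(\ZZ, M)$. Once the module conventions are pinned down, both verifications are immediate.
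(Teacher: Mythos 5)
Your proof is correct, and for the first isomorphism it is essentially the paper's argument: a $\ZZ[G]$-map $\ZZ \ra M$ is determined by $\varphi(1)$, and equivariance against the trivial action forces $\varphi(1) \in M^G$. For the coinvariants, the paper checks well-definedness and surjectivity the same way you do, but disposes of injectivity with the bare assertion that the expression $1 \otimes m$ is ``unique''; your two routes actually supply the missing justification. The hands-on route (the $\ZZ[G]$-balanced map $(n,m) \mapsto n[m]$ inducing an explicit inverse $\ZZ \otimes_{\ZZ[G]} M \ra M_G$) is the rigorous version of what the paper gestures at, while the right-exactness route --- tensoring $0 \ra I_G \ra \ZZ[G] \ra \ZZ \ra 0$ with $M$ and identifying the image of $I_G \otimes_{\ZZ[G]} M$ with $I_G M$ --- is a slightly different and arguably cleaner derivation, which also fits naturally with how the paper later uses the augmentation sequence for dimension shifting. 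Your remark about keeping track of the right-module structure on $\ZZ$ in the tensor factor is exactly the bookkeeping point the paper leaves implicit.
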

\begin{proof}
$\varphi \in \Hom_{\ZZ[G]}(\ZZ,M) \iff \forall g\in G, \varphi(g \cdot n) = g \cdot \varphi(n) \iff \forall g \in G, \varphi(1) = g \cdot \varphi(1)$. 

The second map is well-defined because $1 \otimes (gm-m) = 1 \otimes gm - 1 \otimes m = (g \cdot 1) \otimes m - 1 \otimes m = 0$. It is surjective because every element in $\ZZ \otimes_{\ZZ[G]} M$ can be written as $1 \otimes m$. It is injective because such expression is unique.
\end{proof}

Since group cohomology is an Ext functor, it helps to have an explicit resolution. We define two explicit resolutions that strongly resemble singular cochain construction in topology:
$$C_\bullet(G,M) \rightarrow \ZZ \rightarrow 0 \text{ and } \tilde C_\bullet(G,M) \rightarrow \ZZ \rightarrow 0 $$
We let $C^r(G,M) = \Hom_{\ZZ[G]}(C_r(G), M)$ and $\tilde C^r(G,M) = \Hom_{\ZZ[G]}(\tilde C_r(G), M)$, where
\begin{align*}
C_r(G) &:= \bigoplus_{\gamma \in G^{r}} \ZZ[G] \cdot \gamma \\
\tilde C_r(G) &:= \bigoplus_{\gamma \in G^{r+1}} \ZZ \cdot \gamma \text{   } ; g \cdot (g_0, \cdots g_r) = (gg_0, \cdots gg_r)
\end{align*}
($C_r(G)$ is simply a free $\ZZ[G]$-module, and the $G$-action on $\tilde C_r(G)$ defined above extends by linearity) Also we define maps:
\begin{align*}
d_r &: C_r(G) \rightarrow C_{r-1}(G)\\
d_r(g_1, \cdots g_r) &:= g_1 \cdot (g_2, \cdots g_r) + \left( \sum_{k=1}^{r-1} (-1)^k (g_1, \cdots g_k g_{k+1}, \cdots g_r) \right) + (-1)^r (g_1, \cdots g_{r-1}) \\
\tilde d_r &: \tilde C_r(G) \rightarrow \tilde C_{r-1}(G) \\
\tilde d_r(g_0, \cdots g_r) &:= \sum_{k=0}^r (-1)^k (g_0, \cdots \hat g_k, \cdots g_r)
\end{align*}
where $\hat g_k$ indicates omission of index. This gives rise to maps $d^r$ and $\tilde d^r$:
\begin{align*}
d^r &: C^r(G,M) \rightarrow C^{r+1}(G,M) \\
d^r (\varphi) &= \varphi \circ d_r  \\
\tilde d^r &: \tilde C^r(G,M) \rightarrow \tilde C^{r+1}(G,M) \\
\tilde d^r (\varphi) &= \varphi \circ \tilde d_r 
\end{align*}
Then we claim that $C^\bullet(G,M) \rightarrow \ZZ \rightarrow 0$ and $\tilde C^\bullet(G,M) \rightarrow \ZZ \rightarrow 0$ are both projective (free) resolutions of $\ZZ$. Before we prove this, we show that the two resolutions are isomorphic:
\begin{proposition}
There are $G$-module isomorphisms $\Phi_r : C_r(G) \rightarrow \tilde C_r(G)$ for $r \ge 0$ given by
$$\Phi_r : g \cdot (g_1, \cdots g_r) \mapsto (g, g g_1, g g_1g_2, \cdots (g g_1 \cdots g_r))$$
(definition extends by linearity) such that $\Phi_{r-1} \circ d_r = \tilde d_r \circ \Phi_r$, i.e. the following commutes:
\[\begin{tikzcd}
C_r(G) \ar[d,"d_r"] \ar[r,"\Phi_r"] & \tilde C_r(G) \ar[d,"\tilde d_r"] \\
C_{r-1}(G) \ar[r,"\Phi_{r-1}"] & \tilde C_{r-1}(G)
\end{tikzcd}\]
\end{proposition}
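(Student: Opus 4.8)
The plan is to dispatch the three assertions in turn — that $\Phi_r$ is a well-defined $G$-module map, that it is an isomorphism, and that it intertwines $d_r$ with $\tilde d_r$ — since the first two are essentially formal and only the third involves a computation.

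First I would use that $C_r(G)=\bigoplus_{\gamma\in G^r}\ZZ[G]\cdot\gamma$ is \emph{free} over $\ZZ[G]$ on the tuples $(g_1,\dots,g_r)$. Hence to define a $\ZZ[G]$-module homomorphism $\Phi_r$ it suffices to prescribe $\Phi_r(g_1,\dots,g_r):=(1,g_1,g_1g_2,\dots,g_1\cdots g_r)$ and extend $\ZZ[G]$-linearly; the displayed formula $\Phi_r(g\cdot(g_1,\dots,g_r))=(g,gg_1,\dots,gg_1\cdots g_r)$ is then automatic from the $G$-action on $\tilde C_r(G)$, and its right-hand side visibly lies in $\tilde C_r(G)$. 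So no well-definedness check is required beyond writing this down.

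Next, for invertibility I would exhibit the explicit inverse $\Psi_r:\tilde C_r(G)\to C_r(G)$ defined on the $\ZZ$-basis $G^{r+1}$ of $\tilde C_r(G)$ by $\Psi_r(h_0,h_1,\dots,h_r):=h_0\cdot(h_0^{-1}h_1,\,h_1^{-1}h_2,\,\dots,\,h_{r-1}^{-1}h_r)$ and extended $\ZZ$-linearly. A one-line check gives $\Psi_r(g\cdot(h_0,\dots,h_r))=g\cdot\Psi_r(h_0,\dots,h_r)$, so $\Psi_r$ is a $G$-module map, and evaluating $\Psi_r\circ\Phi_r$ and $\Phi_r\circ\Psi_r$ on basis elements returns the identity both ways (the partial products telescope). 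Thus $\Phi_r$ is a $G$-module isomorphism. This is really just the change of variables $h_i=g_1\cdots g_i$.

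Finally, for the commuting square: since $d_r,\tilde d_r,\Phi_r,\Phi_{r-1}$ are all $\ZZ[G]$-module maps, it is enough to verify $\Phi_{r-1}(d_r(g_1,\dots,g_r))=\tilde d_r(\Phi_r(g_1,\dots,g_r))$ on a single basis tuple. I would expand the right side as $\sum_{k=0}^r(-1)^k$ times the tuple obtained from $(1,g_1,g_1g_2,\dots,g_1\cdots g_r)$ by deleting the entry $g_1\cdots g_k$, then apply $\Phi_{r-1}$ termwise to the three pieces of $d_r(g_1,\dots,g_r)$: the leading term $g_1\cdot(g_2,\dots,g_r)$ yields the $k=0$ summand, each merge term $(g_1,\dots,g_kg_{k+1},\dots,g_r)$ yields the $k$-th summand for $1\le k\le r-1$ (the partial products run straight through the merged pair $g_kg_{k+1}$, so exactly the entry $g_1\cdots g_k$ is missing), and the trailing term $(-1)^r(g_1,\dots,g_{r-1})$ yields the $k=r$ summand; the signs match termwise. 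The only obstacle here is bookkeeping — seeing clearly that "merge two adjacent coordinates" in $d_r$ corresponds precisely to "omit one partial product" in $\tilde d_r$, and that the two boundary terms of $d_r$, which have a different shape from the generic ones, produce exactly the $k=0$ and $k=r$ terms of $\tilde d_r$. There is no conceptual difficulty.
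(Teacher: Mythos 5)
Your proposal is correct and follows essentially the same route as the paper: the explicit inverse $\Psi_r(h_0,\dots,h_r)=h_0\cdot(h_0^{-1}h_1,\dots,h_{r-1}^{-1}h_r)$ is exactly the change-of-variables the paper uses for its surjectivity check, and your termwise matching of the boundary formulas is the same computation the paper carries out on a generic generator. Packaging bijectivity as a two-sided inverse (rather than separate injectivity/surjectivity arguments) and reducing all checks to basis tuples via $\ZZ[G]$-freeness is a mild streamlining, not a different method.
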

\begin{proof}
An element of $C_r(G)$ is of the form
$$\sum_{k=1}^m \left( \sum_{g \in G} n_{k,g} g \right) \cdot (g_{k,1}, \cdots g_{k,r})$$
Firstly $\Phi_r$ is a $G$-module morphism because
$$\Phi_r(h \cdot (g \cdot (g_1, \cdots g_r))) = \Phi_r (hg \cdot (g_1, \cdots g_r)) = (hg, \cdots (hgg_1, \cdots g_r)) = h \cdot \Phi_r( g \cdot (g_1, \cdots g_r))$$
and $\Phi_r$ respects addition by definition. 

$\Phi_r$ is an injection because
\begin{align*}
& \Phi_r \left( \sum_{k,g} n_{k,g} g \cdot (g_{k,1}, \cdots g_{k,r}) \right) = 0 \\
\iff &  \sum_{k,g} n_{k,g} (g, g g_{k,1}, \cdots (g g_{k,1} \cdots g_{k,r})) = 0 \\
\iff & \forall k, g, n_{k,g} = 0
\end{align*}
where we observe that $(g, gg_{k,1}, \cdots (gg_{k,1} \cdots g_{k,r})) = (h, hh_{k,1}, \cdots (hh_{k,1} \cdots h_{k,r})) \iff g=h, g_{k,j} = h_{k,j}$ by term-by-term comparison. 

$\Phi_r$ is a surjection because
$$(h_0, \cdots h_r) = \Phi_r(h_0 \cdot (h_0^{-1} h_1, h_1^{-1} h_2, \cdots h_{r-1}^{-1} h_r) ) $$

Lastly $\Phi \circ d = \tilde d \circ \Phi$ because
\begin{align*}
& (\Phi_r \circ d_r) (g \cdot (g_1, \cdots g_r)) \\
=& \Phi_r \left( gg_1 \cdot (g_2, \cdots g_r) + \left( \sum_{k=1}^{r-1} (-1)^k g \cdot (g_1, \cdots g_k g_{k+1}, \cdots g_r) \right) + (-1)^r g \cdot (g_1, \cdots g_{r-1}) \right) \\
=& (\tilde g_1, \tilde g_2, \cdots \tilde g_r) + \left( \sum_{k=1}^{r-1} (-1)^k (\tilde g_0, \tilde g_1, \cdots \tilde g_{k-1}, \tilde g_{k+1}, \cdots \tilde g_r) \right) + (-1)^r (\tilde g_0, \cdots \tilde g_{r-1}) \\
=& \tilde d_r (\tilde g_0, \tilde g_1, \cdots \tilde g_r) \\
=& (\tilde d_r \circ \Phi_{r-1}) (g \cdot (g_1, \cdots g_r))
\end{align*}
where $\tilde g_k = gg_1 \cdots g_k$. 
\end{proof}

\begin{proposition}
$C_\bullet(G) \rightarrow \ZZ \ra 0$ and $\tilde C_\bullet(G) \rightarrow \ZZ \ra 0$ are exact sequences.
\end{proposition}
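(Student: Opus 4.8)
The plan is to build an explicit contracting homotopy, and to reduce to one of the two complexes using the isomorphism just established. Since the previous proposition produces a degreewise isomorphism $\Phi_\bullet \colon C_\bullet(G) \xrightarrow{\sim} \tilde C_\bullet(G)$ commuting with the differentials, and it is compatible with the augmentations to $\ZZ$ (because $\Phi_0$ sends the generator $g \in \ZZ[G] = C_0(G)$ to the one-tuple $(g) \in \tilde C_0(G)$, and $\epsilon(g) = 1 = \epsilon((g))$), the map $\Phi_\bullet$ is an isomorphism of augmented complexes and hence preserves exactness. So it suffices to prove that $\tilde C_\bullet(G) \ra \ZZ \ra 0$ is exact; exactness of $C_\bullet(G) \ra \ZZ \ra 0$ then follows by transporting along $\Phi_\bullet$.

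First I would fix the identity element $e \in G$ and define $\ZZ$-linear maps $s_{-1} \colon \ZZ \ra \tilde C_0(G)$ by $n \mapsto n \cdot (e)$, and for $r \ge 0$ a map $s_r \colon \tilde C_r(G) \ra \tilde C_{r+1}(G)$ by $(g_0, \dots, g_r) \mapsto (e, g_0, \dots, g_r)$, extended $\ZZ$-linearly over the free $\ZZ$-basis $G^{r+1}$ of $\tilde C_r(G)$. These are maps of abelian groups only, not of $G$-modules, which is harmless: exactness is a statement about the underlying $\ZZ$-modules, and that is all that is required for the $\tilde C_r(G)$ (and $C_r(G)$), being free $\ZZ[G]$-modules, to constitute a free resolution of $\ZZ$. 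I would then verify the homotopy identities $\epsilon \circ s_{-1} = \on{id}_{\ZZ}$, $\tilde d_1 \circ s_0 + s_{-1} \circ \epsilon = \on{id}_{\tilde C_0(G)}$, and $\tilde d_{r+1} \circ s_r + s_{r-1} \circ \tilde d_r = \on{id}_{\tilde C_r(G)}$ for $r \ge 1$. Each is an immediate expansion of the alternating sums defining $\tilde d$: in $\tilde d_{r+1}(e, g_0, \dots, g_r)$ the $k=0$ term is exactly $(g_0, \dots, g_r)$, while the terms with $k \ge 1$ are precisely $-\,s_{r-1}\big(\tilde d_r(g_0, \dots, g_r)\big)$. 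It follows that $\on{id}$ is chain homotopic to $0$ on the augmented complex $\cdots \ra \tilde C_1(G) \ra \tilde C_0(G) \xrightarrow{\epsilon} \ZZ \ra 0$, so every homology group of this complex vanishes, i.e. it is exact.

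I do not anticipate a genuine obstacle here; this is the standard contraction of the (homogeneous) bar resolution, and the only slightly delicate point is the one noted above — the homotopy $s_\bullet$ depends on a set-theoretic choice of basepoint $e \in G$ and is therefore merely $\ZZ$-linear rather than $\ZZ[G]$-linear, so it certifies exactness of the complex of abelian groups without being $G$-equivariant (which would in fact be impossible in general). Everything else is routine bookkeeping with signs in the alternating sums.
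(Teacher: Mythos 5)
Your proof is correct and is essentially the paper's argument: the paper also exhibits a contracting homotopy given by prepending a fixed group element (an arbitrary $h$ rather than $e$) so that $d s + s d = \on{id}$, and transfers exactness between $C_\bullet(G)$ and $\tilde C_\bullet(G)$ via the isomorphism $\Phi_\bullet$, just in the opposite direction. Your added care with the augmentation term $s_{-1}$ and the remark that the homotopy is only $\ZZ$-linear are fine refinements of the same approach.
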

\begin{proof}
Fixing an element $h$, define maps $f_r : C_r(G) \ra C_{r+1}(G)$ by
$$f_r : \varphi \mapsto ((g_0, \cdots g_r) \mapsto (h, g_0, \cdots g_r))$$
Then it can be verified that $\d_{r+1} \circ f_r + f_{r-1} \circ \d_r = 1$, which tells us that whenever $\d_r x = 0$, $x = \d_{r+1} (f_r(x))$. The result for $\tilde C_r(G)$ follows since $C_r(G) \cong \tilde C_r(G)$.
\end{proof}

This gives our standard resolutions for group cohomology and homology.

We will frequently use the inhomogeneous cochains to describe group cohomology. This is a map $\varphi: \bigoplus_{\gamma \in G^r} \ZZ[G] \cdot \gamma \ra M$ but since this map is determined uniquely by its values on $\gamma$, a $r$-cochain can simply be regarded as a $\textbf{Set}$ map $G^r \ra M$. By slight abuse of notation we will refer to this map as a cochain. A cocycle must satisfy an additional condition:
$$0 = \d \varphi(g_0, \cdots g_n) = g_0 \varphi(g_1, \cdots g_n) + \sum_{k=1}^{n-1} \varphi(g_1, \cdots g_k g_{k+1}, \cdots g_n) + (-1)^n \varphi(g_0, \cdots g_{n-1})$$
In particular, we call a 1-cocycle a \emph{crossed homomorphism}, as it satisfies
$$\varphi(gh) = g\varphi(h) + \varphi(g)$$
behaving like a homomorphism, but not quite.

\begin{proposition}
Given an exact sequence of $R$-modules
$$0 \ra A \ra B \ra C \ra 0$$
the connecting homomorphisms of group cohomology and homology are given by
\begin{align*}
\delta : & H^n(G,C) \ra H^{n+1}(G,A) \\
& [\varphi] \mapsto [\d \tilde \varphi]
\end{align*}
where $\tilde \varphi : P_n \rightarrow B$ is a lift of $\varphi : P_n \rightarrow C$ and
\begin{align*}
\delta : & H_n(G,C) \ra H_{n-1}(G,A) \\
& \left[ \sum_p p\otimes a_p \right] \mapsto \left[ \sum_p \d p \otimes \tilde a_p \right]
\end{align*}
where $\tilde a_p$ is a lift of $a_p$ through surjection $B \rightarrow C$.
\end{proposition}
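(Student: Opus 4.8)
The plan is to obtain this as a direct specialization of Proposition~\ref{ext tor delta}. By definition $H^n(G,-) = \Ext^n_{\ZZ[G]}(\ZZ,-)$ and $H_n(G,-) = \Tor^{\ZZ[G]}_n(\ZZ,-)$, both computed from the standard free resolution $C_\bullet(G) \to \ZZ \to 0$ (equivalently from $\tilde C_\bullet(G) \to \ZZ \to 0$, since the two are isomorphic). So I would invoke Proposition~\ref{ext tor delta} with $R = \ZZ[G]$, with the fixed first argument equal to $\ZZ$, with the resolution $P_\bullet = C_\bullet(G)$, and with the exact sequence $0 \to A \to B \to C \to 0$ of the present statement playing the role of $0 \to L \to M \to N \to 0$ there. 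Under these substitutions the two displayed formulas are exactly the Ext-formula and the Tor-formula of Proposition~\ref{ext tor delta}, with $\d$ interpreted as the differential of the chosen resolution. The only genuine task is to confirm that the hypotheses of that proposition hold in this setting and that the identifications $H^n = \Ext^n$, $H_n = \Tor_n$ are set up with the \emph{same} resolution used to compute the connecting map, so that no resolution-comparison isomorphism intervenes.

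For completeness I would recall the underlying Snake Lemma computation. For the cohomology half: apply $\Hom_{\ZZ[G]}(C_\bullet(G),-)$ to $0 \to A \to B \to C \to 0$; since each $C_r(G)$ is a free $\ZZ[G]$-module, $\Hom_{\ZZ[G]}(C_r(G),-)$ is exact, so we get a short exact sequence of cochain complexes $0 \to C^\bullet(G,A) \to C^\bullet(G,B) \to C^\bullet(G,C) \to 0$, whose long exact cohomology sequence is the group-cohomology long exact sequence. By the Snake Lemma recipe the connecting map sends a cocycle $\varphi \in C^n(G,C)$ to the class obtained by lifting $\varphi$ to $\tilde\varphi \in C^n(G,B)$ (possible by freeness of $C_n(G)$), applying the cochain differential to get $d^n\tilde\varphi \in C^{n+1}(G,B)$, noting that it maps to $d^n\varphi = 0$ in $C^{n+1}(G,C)$, and hence factoring it uniquely through $C^{n+1}(G,A)$; this factorization is a cocycle representing $\delta[\varphi]$, which is precisely $[\d\tilde\varphi]$ in the notation of the statement. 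For the homology half: apply $C_\bullet(G) \otimes_{\ZZ[G]} -$ to $0 \to A \to B \to C \to 0$; freeness (hence flatness) of each $C_r(G)$ gives a short exact sequence of chain complexes, and the Snake Lemma recipe sends a cycle $\sum_p p \otimes a_p \in C_n(G) \otimes_{\ZZ[G]} C$ to the class of $\sum_p \d p \otimes \tilde a_p \in C_{n-1}(G) \otimes_{\ZZ[G]} A$, where $\tilde a_p \in B$ is any lift of $a_p$; indeed $\sum_p \d p \otimes \tilde a_p$ maps to $\sum_p \d p \otimes a_p = 0$ in $C_{n-1}(G) \otimes_{\ZZ[G]} C$, so it comes from a unique element of $C_{n-1}(G) \otimes_{\ZZ[G]} A$ by injectivity of the map $C_{n-1}(G) \otimes_{\ZZ[G]} A \to C_{n-1}(G) \otimes_{\ZZ[G]} B$.

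The main obstacle here is not mathematical but a matter of bookkeeping: one must check that the symbol $\d$ in the statement refers to the differential $d_\bullet$ (resp. $d^\bullet$) of whichever resolution is used to define $\Ext$ and $\Tor$, and that the well-definedness assertions — independence of the chosen lift $\tilde\varphi$ (resp. $\tilde a_p$) and of the cocycle (resp. cycle) representative — are exactly the well-definedness already built into the Snake Lemma, so they need not be reproven. With these points pinned down the proposition is a transcription of Proposition~\ref{ext tor delta}.
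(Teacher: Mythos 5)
Your proposal is correct and matches the paper's argument: the paper proves this proposition by a direct application of Proposition~\ref{ext tor delta} with $R=\ZZ[G]$, first argument $\ZZ$, and the standard resolution, exactly as you do. Your additional Snake Lemma recap is sound but not needed beyond that citation.
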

\begin{proof}
This is a direct application of Proposition~\ref{ext tor delta}.
\end{proof}

Now we define Tate cohomology, a concatenation of group cohomology and homology. For reasons soon to be apparent, consider the following definition:
\begin{definition}
For a finite group $G$, Tate cohomology groups are defined as:
\begin{align*}
H_T^r(G,M) = \begin{cases} H^r(G,M) & \text{ if }r>0 \\ \on{coker} \widehat{\Nm}_G = \frac{M^G}{\Nm_G(M)} & \text{ if }r=0 \\ \on{ker} \widehat{\Nm}_G = \frac{\ker \Nm_G}{I_G M}  & \text{ if }r=-1 \\ H_{-(r+1)}(G,M) & \text{ if }r<-1 \end{cases}
\end{align*}
where $\on{Nm}_G : M \rightarrow M$ is the map $a \mapsto \sum_{g\in G} g \cdot a$ and $\widehat{\Nm_G} : M_G \rightarrow M^G$ is the induced map. 
\end{definition}

Then we can splice group homology long exact sequence in group cohomology long exact sequence induced by
$$0 \ra A \ra B \ra C \ra 0$$
by applying snake lemma to the following diagram:
\[\begin{tikzcd}
 & & H_T^{-1}(G,A)\ar[r]\ar[d] & H_T^{-1}(G,B)\ar[r]\ar[d] & H_T^{-1}(G,C)\ar[d] & & \\
\cdots \ar[r] & H_1(G,C) \ar[ru,dashed]\ar[r] & H_0(G,A)\ar[r]\ar[d,"\widehat{\Nm}_G"] & H_0(G,B) \ar[r]\ar[d,"\widehat{\Nm}_G"] & H_0(G,C)\ar[d,"\widehat{\Nm}_G"] \ar[r] & 0 & \\
& 0 \ar[r] & H^0(G,A)\ar[r]\ar[d] & H^0 (G,B)\ar[r]\ar[d] & H^0 (G,C)\ar[r]\ar[d] & H^1(G,C) \ar[r] & \cdots \\
 & & H_T^{0}(G,A) \ar[r] & H_T^{0}(G,B) \ar[r] & H_T^{0}(G,C) \ar[ru,dashed] & &
\end{tikzcd}\]
We must check that the induced maps (denoted using dashed arrows) are well-defined. For convenience in explaining diagram chase, let's refer to maps using directions. For example, we'll refer to the composition $H_1(G,C) \rightarrow H_0(G,A) \rightarrow H^0(G,A) \ra H^0(G,B)$ as the right-down-right map starting at $H_1(G,C)$. 

($H_1(G,C) \ra H_T^{-1}(G,A)$) The right-down-right map starting at $H_1(G,C)$ is equal to right-right-down map, which is zero due to exactness of the rows. Due to injectivity of $H^0(G,A) \rightarrow H^0(G,B)$, this means that right-down map starting at $H_1(G,C)$ is zero, i.e. image of $H_1(G,C) \ra H_0(G,A)$ is in kernel of $\Nm_G$. Thus the induced map $H_1(G,C) \ra H_T^{-1}(G,A)$ is well-defined.

($H_T^0(G,C) \ra H^{1}(G,A)$) The right-down-right map starting at $H_0(G,B)$ is equal to down-right-right map, which is zero due to exactness of the rows. Thus the down-right map starting at $H_0(G,C)$ is always zero, as we can always lift through the surjection $H_0(G,B) \rightarrow H_0(G,C)$. Thus $\Nm_G(H_0(G,C)) \subseteq \ker(H^0(G,C) \rightarrow H^1(G,C))$ and therefore we get an induced map $H_T^0(G,C) \ra H^1(G,C)$.

$H_T^{-1}(G,C) \rightarrow H_T^0(G,A)$ is given by snake lemma:
\begin{align*}
\delta : & H_T^{-1}(G,C) \rightarrow H_T^0(G,A) \\
& [a] \mapsto [\Nm_G(\tilde a)] \text{ where $\tilde a \in B$ is a lift}
\end{align*}
Therefore we get a long exact sequence
\begin{align*}
\cdots & \ra H_T^r(G,A) \ra H_T^r(G,B) \ra H_T^r(G,C) \ra \\
& \ra  H_T^{r+1}(G,A) \ra H_T^{r+1}(G,B) \ra H_T^{r+1}(G,C) \ra \cdots
\end{align*}
that is exact for all $r \in \ZZ$.

We also introduce the notion of cup product for Tate cohomology. We will later use it to describe inverse of the local reciprocity map as a cup product with a distinguished cohomology class.

\begin{proposition}
There is a unique covariant biadditive pairing $\smile$:
\begin{align*}
& H_T^r(G,M) \times H_T^s(G,N) \ra H_T^{r+s}(G,M\otimes N) \\
& (a, b) \mapsto a \smile b
\end{align*}
such that $\smile$ is a natural transformation between bifunctors $H_T^r(G,-) \times H_T^r(G,-)$ and $H_T^{r+s}(G,-)$, and the following properties hold:
\begin{enumerate}
\item For $r=s=0$, the map is given by $[a] \smile [b] = [a \otimes b]$
\item Suppose we have exact sequence $0 \ra L \ra M \ra N \ra 0$ such that
$$0 \ra L \otimes A \ra M \otimes A \ra N \otimes A \ra 0$$
is exact. Then
$$(\delta a) \smile b = \delta(a \smile b), a \in H^r(G,N), b \in H^s(G,A)$$
\item Suppose we have exact sequence $0 \ra L \ra M \ra N \ra 0$ such that
$$0 \ra A \otimes L \ra A \otimes M \ra A \otimes N \ra 0$$
is exact. Then
$$a \smile \delta b = (-1)^r \delta(a \smile b), a \in H^r(G,A), b \in H^s(G,N)$$
\end{enumerate}
\end{proposition}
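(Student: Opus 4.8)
The plan is to build $\smile$ explicitly in nonnegative degrees from a cochain formula, and then extend it --- uniquely --- to all $r,s\in\ZZ$ by dimension shifting; throughout I read the groups $H^r$ occurring in (1)--(3) as the Tate groups $H_T^r$, $r\in\ZZ$. The dimension shifting uses the two exact sequences
$$0\ra M\ra\ZZ[G]\otimes M\ra J_G\otimes M\ra 0,\qquad 0\ra I_G\otimes M\ra\ZZ[G]\otimes M\ra M\ra 0$$
recorded in the preliminaries. Two features make them usable. First, an induced module $\ZZ[G]\otimes_\ZZ A$ (with diagonal action) has vanishing Tate cohomology in \emph{every} degree --- a fact I would record first, via Shapiro's lemma when $r\ne 0$ and a direct computation of $\widehat{\Nm}_G$ when $r\in\{0,-1\}$ --- so the connecting maps $\delta\colon H_T^{r}(G,J_G\otimes M)\xrightarrow{\ \sim\ }H_T^{r+1}(G,M)$ and $\delta\colon H_T^{r}(G,M)\xrightarrow{\ \sim\ }H_T^{r+1}(G,I_G\otimes M)$ are isomorphisms for all $r$. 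Second, both sequences split as sequences of $\ZZ$-modules (because $\ZZ$ and $J_G$ are $\ZZ$-free), hence remain exact after $-\otimes_\ZZ A$ for any $A$; this is exactly the hypothesis needed to apply (2) and (3) to them.

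Step 1 (nonnegative degrees). On the homogeneous resolution I would set, for $\varphi\in\tilde C^r(G,M)$ and $\psi\in\tilde C^s(G,N)$,
$$(\varphi\smile\psi)(g_0,\dots,g_{r+s}):=\varphi(g_0,\dots,g_r)\otimes\psi(g_r,\dots,g_{r+s}),$$
which is $\ZZ[G]$-linear for the diagonal action on $M\otimes N$. A direct computation gives the Leibniz identity $\tilde d(\varphi\smile\psi)=(\tilde d\varphi)\smile\psi+(-1)^r\varphi\smile(\tilde d\psi)$, so $\smile$ carries a pair of cocycles to a cocycle and kills the product in cohomology as soon as one factor is a coboundary; thus it descends to a biadditive, bifunctorial pairing $H^r(G,M)\times H^s(G,N)\to H^{r+s}(G,M\otimes N)$ for $r,s\ge 0$. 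Setting $r=s=0$ and using $m\otimes\Nm_G n=\Nm_G(m\otimes n)$ for $m\in M^G$ shows it further descends modulo norms, which is exactly (1); and rerunning the cochain computation with the explicit formula for $\delta$ from Proposition~\ref{ext tor delta} (lift a representing cocycle, then apply $\tilde d$) yields (2) and (3) for $r,s\ge 0$, the $(-1)^r$ in (3) being precisely the Leibniz sign.

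Step 2 (uniqueness). If $\smile$ and $\smile'$ both satisfy (1)--(3), I would show $a\smile b=a\smile' b$ by induction on $|r|+|s|$. If $r\ge 1$, tensor the first standard sequence with $N$: since $\delta\colon H_T^{r-1}(G,J_G\otimes M)\to H_T^{r}(G,M)$ is onto, write $a=\delta a'$; then (2) forces $a\smile b=\delta(a'\smile b)$, a product in first degree $r-1$. If $r\le -1$, tensor the second standard sequence with $N$: there $\delta$ is an isomorphism on both $H_T^{r}(G,M)$ and $H_T^{r+s}(G,M\otimes N)$, so (2) gives $a\smile b=\delta^{-1}\big((\delta a)\smile b\big)$, a product in first degree $r+1$. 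Iterating brings the first degree to $0$; the same argument using (3) brings the second degree to $0$; and there (1) fixes the value. Hence the pairing, if it exists, is unique.

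Step 3 (existence, and the obstacle). Running Step 2 backwards defines the pairing: for arbitrary $(r,s)$ one transports both classes into nonnegative degrees via the connecting isomorphisms above, forms the Step-1 product, and transports back, inserting the sign prescribed by (3) whenever the second-variable shifts are moved past the first factor. One then checks that this is independent of how far one shifts (an extra $\delta$ on each side cancels, using (2)/(3) in the already-established nonnegative range), that it is biadditive and natural (inherited from Step 1 and naturality of $\delta$), and that (1)--(3) hold in all degrees (apply enough $\delta$'s to both sides to reduce to the Step-1 case). The main obstacle is precisely this well-definedness bookkeeping: once the two standard sequences are tensored with various modules, several short exact sequences with cohomologically trivial middle term are in play, all inducing the same degree shift, and one must verify that any two of them yield the same transport map and the same signs. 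Conceptually everything after Step 1 is formal dimension shifting; the genuine risk lies in mismanaging the signs and in conflating these auxiliary sequences.
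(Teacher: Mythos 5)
The paper never actually proves this proposition --- it is stated and the ``details of the computation'' are deferred, with only special-case products computed later using properties (1)--(3) --- so there is no in-paper argument to compare against; judged on its own, your proposal is the standard proof (as in Milne or Cassels--Fr\"ohlich) and is correct in outline. The Alexander--Whitney formula on the homogeneous resolution, the Leibniz identity giving (2) and (3) with the sign $(-1)^r$, and the extension/uniqueness by dimension shifting along the two $\ZZ$-split sequences $0 \ra M \ra \ZZ[G]\otimes M \ra J_G\otimes M \ra 0$ and $0 \ra I_G\otimes M \ra \ZZ[G]\otimes M \ra M \ra 0$ (whose middle terms are induced, so the connecting maps are isomorphisms in every degree) is exactly what the omitted proof has to be. Two points need to be made explicit beyond your sketch. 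First, in the nonnegative range the only Tate-modified group is in degree $0$, and the descent of the cochain-level product through $H_T^0$ against a class of degree $s>0$ does not follow from the Leibniz rule or from your $r=s=0$ check alone: one must either verify directly that $\Nm_G(M)\smile Z^s(G,N)$ consists of coboundaries (a short computation with the cocycle identity, writing $\Nm_G(c)\otimes\psi$ as $\d\chi$ for $\chi(g_0,\dots,g_{s-1})=\sum_{g\in G} gc\otimes\psi(g,g_0,\dots,g_{s-1})$), or define the $(0,s)$ and $(r,0)$ products by shifting that variable as well. Second, the ``bookkeeping'' you flag in Step 3 has one essential kernel: independence of the order in which the two variables are shifted, which comes down to the anticommutation (up to the sign already built into (3)) of the connecting maps in the two variables for these split sequences; once that is checked, independence of the number of shifts and the validity of (1)--(3) in all degrees follow formally, as you say. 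With those two verifications written out, the argument is complete.
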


We will give some details to the computation of cup product later.

\subsection{Restriction, Inflation, Corestriction}

Here we introduce three important maps that relate cohomology of a module to its submodule and quotient modules. Firstly consider a general situation:
\begin{proposition}\label{cor res general}
Suppose $M$ is a $G$-module and $M'$ is a $G'$-module with $\alpha : G' \ra G$ and $\beta : M \rightarrow M'$ that satisfy a compatibility condition: $\beta(\alpha(g') \cdot m) = g' \cdot \beta(m)$. Then we have an induced map:
\begin{align*}
H_T^r(G,M) \rightarrow H_T^r(G',M')
\end{align*}
\end{proposition}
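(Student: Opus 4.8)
The plan is to realise the map at the level of the standard (inhomogeneous) cochain complexes and then pass to cohomology, after which the Tate-cohomology cases $r \le 0$ will follow by a routine extension via dimension shifting. First I would work in positive degrees. Given the standard resolution, a class in $H^r(G,M)$ is represented by a cochain $\varphi \colon G^r \to M$ satisfying the cocycle condition. Define a pullback cochain $\alpha^*\varphi \colon (G')^r \to M'$ by
$$
(\alpha^*\varphi)(g'_1,\dots,g'_r) := \beta\bigl(\varphi(\alpha(g'_1),\dots,\alpha(g'_r))\bigr).
$$
I would check three things in turn: (i) $\alpha^*\varphi$ lands in the right module and is $\ZZ$-linear — immediate; (ii) $\alpha^*$ commutes with the coboundary $\d^r$ — here the only nontrivial term is the leading one $g'_1\cdot(\text{--})$ versus $\alpha(g'_1)\cdot(\text{--})$, and this is exactly where the compatibility hypothesis $\beta(\alpha(g')\cdot m) = g'\cdot\beta(m)$ is used; all the other terms of $\d$ involve only multiplication in the group and omission of entries, which $\alpha$ respects since it is a homomorphism; (iii) consequently $\alpha^*$ sends cocycles to cocycles and coboundaries to coboundaries, so it descends to $H^r(G,M) \to H^r(G',M')$ for $r \ge 1$. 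This also handles $r=0$ directly: $M^G \to (M')^{G'}$, $m \mapsto \beta(m)$, is well-defined because $\beta(\alpha(g')m) = g'\beta(m)$, so if $m \in M^G$ then $g'\beta(m) = \beta(m)$.

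Next I would treat the remaining Tate degrees $r \le -1$ and reconcile the $r=0$ Tate group (the cokernel $M^G/\Nm_G(M)$) with the honest invariants. The cleanest route is dimension shifting: embed $M$ in a coinduced module $\Hom_{\ZZ}(\ZZ[G],M)$ (or use the canonical $0 \to I_G\otimes M \to \ZZ[G]\otimes M \to M \to 0$ and its dual), which is cohomologically trivial, and use the long exact sequence for Tate cohomology established earlier in the paper to shift any negative-degree statement up into positive degrees where the cochain construction already applies. The one subtlety is naturality of the shift: I need the auxiliary short exact sequence for $M$ over $G$ to map compatibly to the corresponding one for $M'$ over $G'$ — this follows because $\beta$ and $\alpha$ induce a map $\ZZ[G']\otimes M' \leftarrow$ (pullback of) $\ZZ[G]\otimes M$ compatible with the augmentations, so the connecting maps in the two long exact sequences commute with the degree-$r$ pullback maps, and an induction on $|r|$ completes the definition in all degrees. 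Alternatively, one can just directly write down the maps on $M^G/\Nm_G M$, on $\ker\Nm_G/I_GM$, and on the $H_n$ via the homogeneous-chain description $\tilde C_\bullet$, checking compatibility with $\d$ as in the positive-degree case; I would mention this as the hands-on alternative.

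The main obstacle — really the only place any thought is needed — is step (ii): verifying that $\alpha^*$ is a chain map, i.e. that the compatibility condition on $(\alpha,\beta)$ is exactly strong enough to make the leading term of the coboundary transform correctly. Everything else (linearity, behaviour on coboundaries, the $r=0$ case, biadditivity/functoriality bookkeeping) is formal. A secondary point to be careful about is that the construction must be checked to be independent of the chosen resolution, but since we are using the canonical bar resolution on both sides and $\alpha^*$ is manifestly defined on it, this reduces to the standard fact (already recorded in the excerpt) that Ext is independent of the resolution, together with functoriality of that comparison. I would therefore present the positive-degree cochain map as the heart of the argument, note the $r=0$ invariants case as immediate, and invoke dimension shifting plus the Tate long exact sequence for the rest.
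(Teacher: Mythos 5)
Your proposal follows essentially the same route as the paper's proof: in positive degrees the map is the cochain pullback $\varphi \mapsto \beta \circ \varphi \circ \alpha^{\times r}$, with the compatibility hypothesis used precisely to show this commutes with the coboundary, and the remaining Tate degrees are obtained by dimension shifting along $0 \ra I_G \otimes M \ra \ZZ[G] \otimes M \ra M \ra 0$. If anything, you are slightly more careful than the paper in flagging the naturality of the dimension shift, so the argument is fine as it stands.
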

\begin{proof}
For $r>0$ and $\varphi \in C^r(G,M)$, consider the map $\varphi \mapsto \beta \circ \varphi \circ \alpha^n$. The compatibility condition is required for proving that this map commutes with coboundary map $\d$. Once this is done we indeed get an induced map of Tate cohomology for $r>0$.

Now the induced map for the rest of the Tate cohomology groups are obtained by dimension shifts; consider the cohomology long exact sequence associated to
\begin{align*}
& 0 \ra I_G \otimes M \ra \ZZ[G] \otimes M \ra M \ra 0 \\
\implies & H_T^r(G,M) \cong H_T^{r+1}(G,I_G \otimes M)
\end{align*}
and thus we get induced maps $H_T^r(G,M) \rightarrow H_T^r(G',M')$ for all $r \in \ZZ$.
\end{proof}

From this result we define the following maps:
\begin{align*}
& \on{Res} : H_T^r(G,M) \ra H_T^r(H,M) \text{ if $H \le G$} \\
& \on{Inf} : H_T^r(G/H,M^H) \ra H_T^r(G,M) \text{ if $H \trianglelefteq G$} \\
& \on{Cor} : H_T^r(H,M) \ra H_T^r(G,M) \text{ if $[G:H] < \infty$}
\end{align*}
Here $\on{Res}$ literally restricts a cocycle to its subgroup and $\on{Inf}$ simply composes with the canonical projection. The only tricky definition is the $\on{Cor}$ map, which is defined as a composition of isomorphism $H^r(H,M) \cong H^r(G,\on{Ind}_H^G M)$ in Shapiro's lemma and the map obtained by applying Proposition~\ref{cor res general} to the following map:
\begin{align*}
& \Hom_{\ZZ[H]}(\ZZ[G] , M) \rightarrow M \\
& f \mapsto \sum_{k=1}^n g_k \cdot f(g_k^{-1})
\end{align*}
which composes into
\begin{align*}
\on{Cor} : & H^r(H,M) \rightarrow H^r(G,M) \\
& [f] \mapsto \left[ p \mapsto \sum_{k=1}^n g_k f(g_k^{-1} p) \right]
\end{align*}
where $f \in \Hom_{\ZZ[H]}(P_r,A)$. As before, the definition of corestriction map extends to all Tate cohomology by dimension shift.

\begin{proposition}
For a $G$-module $M$ and a finite-index subgroup $H \le G$,
\begin{align*}
\on{Cor} \circ \on{Res} = [G:H]
\end{align*}
\end{proposition}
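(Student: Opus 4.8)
The plan is to verify the claim $\on{Cor} \circ \on{Res} = [G:H]$ in degree $0$ using the explicit formulas already recorded, and then propagate it to all degrees by dimension shift. Working in degree $r=0$ is convenient because $H_T^0(G,M) = M^G/\Nm_G(M)$ and both $\on{Res}$ and $\on{Cor}$ have transparent descriptions there.

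First I would unwind $\on{Res}$ in degree $0$. Since $H_T^0(G,M)$ is computed from the class of an element $m \in M^G$, restriction sends $[m]_G \in M^G/\Nm_G(M)$ to $[m]_H \in M^H/\Nm_H(M)$ — this is just the observation that $M^G \subseteq M^H$ and $\Nm_G(M) \subseteq \Nm_H(M)$, consistent with ``$\on{Res}$ literally restricts a cocycle.'' Next I would unwind $\on{Cor}$ in degree $0$ from the displayed formula $[f] \mapsto [p \mapsto \sum_k g_k f(g_k^{-1} p)]$; tracing through the identification of $H_T^0$ with coinvariants-mod-norm, the corestriction of $[m]_H$ is $[\sum_{k=1}^n g_k \cdot m]_G$, where $g_1,\dots,g_n$ are left coset representatives of $H$ in $G$. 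Composing, $(\on{Cor} \circ \on{Res})[m]_G = [\sum_k g_k \cdot m]_G$, and since $m \in M^G$ we have $g_k \cdot m = m$ for every $k$, so this equals $[n \cdot m]_G = [G:H]\cdot [m]_G$. That settles the degree-$0$ case.

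To push this to arbitrary $r \in \ZZ$, I would invoke the dimension shift used throughout this section: the exact sequence $0 \ra I_G \otimes M \ra \ZZ[G] \otimes M \ra M \ra 0$ gives natural isomorphisms $H_T^r(G,M) \cong H_T^{r+1}(G, I_G \otimes M)$, and likewise for $H$ (using the same sequence, since $I_G \otimes M$ and $\ZZ[G] \otimes M$ are $G$-modules a fortiori $H$-modules; one uses $0 \ra I_G \otimes M \ra \ZZ[G] \otimes M \ra M \ra 0$ for both groups). The point is that $\on{Res}$ and $\on{Cor}$ are built from the machinery of Proposition~\ref{cor res general}, which is compatible with the connecting homomorphisms of long exact sequences, so the squares relating $H_T^r(G,M) \to H_T^r(H,M)$ to $H_T^{r+1}(G, I_G\otimes M) \to H_T^{r+1}(H, I_G \otimes M)$ commute for both $\on{Res}$ and $\on{Cor}$. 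Multiplication by the integer $[G:H]$ commutes with everything in sight, so if $\on{Cor}\circ\on{Res} = [G:H]$ holds in degree $r+1$ for all modules it holds in degree $r$, and similarly one shifts upward using $0 \ra M \ra \ZZ[G]\otimes M \ra J_G \otimes M \ra 0$ to reach positive degrees. Induction in both directions from $r=0$ then covers all of $\ZZ$.

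The main obstacle I anticipate is the degree-$0$ bookkeeping for $\on{Cor}$: one must carefully check that the chain-level formula $f \mapsto \sum_k g_k f(g_k^{-1}p)$, together with Shapiro's isomorphism $H^r(H,M) \cong H^r(G,\on{Ind}_H^G M)$ and the identification $H_T^0 \cong M_G/\text{(image of norm)}$ or $\cong M^G/\Nm_G M$, really does descend to $[m]_H \mapsto [\sum_k g_k\cdot m]_G$ — in particular that the coset-representative sum is independent of the chosen representatives modulo $\Nm_G(M)$, and that the result genuinely lands in $M^G$. This is a routine but slightly fiddly diagram chase; once it is in hand, the rest of the argument is formal. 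An alternative that sidesteps Shapiro entirely is to argue directly on inhomogeneous cochains: for $\varphi \in C^r(H,M)$ a cocycle, $(\on{Cor}\circ\on{Res}\,\varphi)(g_1,\dots,g_r) = \sum_{k} g_k \cdot \varphi(\dots)$ where the arguments are suitable conjugates, and for a cocycle pulled back from $G$ each summand contributes the same class, yielding the factor $n = [G:H]$; but the degree-$0$ route above is cleanest and the dimension shift does the rest.
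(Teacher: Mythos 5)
Your argument is correct, but it is packaged differently from the paper's proof, so a comparison is worth making. The paper proves the identity in one line at the cochain level: a class of $H_T^r(G,M)$ is represented by a $G$-equivariant $\varphi \in \Hom_{\ZZ[G]}(P_r,M)$, restriction is literally the inclusion into $\Hom_{\ZZ[H]}(P_r,M)$, and the corestriction formula then gives $p \mapsto \sum_k g_k\varphi(g_k^{-1}p) = \sum_k g_k g_k^{-1}\varphi(p) = [G:H]\,\varphi(p)$, which settles every degree computed from the standard resolution at once, the remaining Tate degrees following by the same dimension-shift formalism you invoke. You instead anchor the computation at $r=0$, where $\on{Res}$ is induced by $M^G \subseteq M^H$ (note $\Nm_G(M)\subseteq \Nm_H(M)$) and $\on{Cor}$ by $m \mapsto \sum_k g_k\cdot m$, and then propagate in both directions using the shift isomorphisms from $0 \ra I_G\otimes M \ra \ZZ[G]\otimes M \ra M \ra 0$ and $0 \ra M \ra \ZZ[G]\otimes M \ra J_G\otimes M \ra 0$, together with the facts that $\on{Res}$, $\on{Cor}$ and multiplication by the integer $[G:H]$ commute with connecting homomorphisms and that the middle terms are cohomologically trivial for $H$ as well (since $\ZZ[G]$ is $\ZZ[H]$-free); that compatibility is in any case how $\on{Cor}$ is defined outside the range of the cochain description, so your induction is legitimate. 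The underlying identity is the same in both proofs, namely $\sum_k g_k\cdot(g_k^{-1}\cdot x) = [G:H]\,x$ on $G$-invariant data, but your route trades the uniform cochain computation for a concrete degree-$0$ check plus formal naturality; the degree-$0$ formulas you use for $\on{Res}$ and $\on{Cor}$ on $H_T^0$ do agree with the ones induced by the paper's chain-level definitions (for $\on{Cor}$ this follows by evaluating the displayed formula $f \mapsto (p \mapsto \sum_k g_k f(g_k^{-1}p))$ on the degree-$0$ cocycle attached to $m \in M^H$), so the bookkeeping you flag as the main obstacle does go through. The paper's version is shorter; yours makes explicit the naturality that the paper leaves implicit when extending to negative degrees.
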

\begin{proof}
For $\varphi \in \Hom_{\ZZ[G]}(P_n, M)$, 
\begin{align*}
\on{Cor}(\on{Res}([\varphi])) = \left[ p \mapsto \sum_{k=1}^n g_k \varphi(g_k^{-1} p) \right] = \left[ p \mapsto \sum_{k=1}^n g_k g_k^{-1} \varphi(p) \right] = [G:H] [\varphi]
\end{align*}
\end{proof}

\begin{proposition}[Inflation-Restriction Sequence]
Suppose for a normal subgroup $H \trianglelefteq G$ and integer $r>0$, whenever $0<i<r, H^r(H,M)=0$. Then the following sequence is exact:
\begin{align*}
0 \ra H_T^r(G/H, M^H) \xrightarrow{\text{Inf}} H_T^r(G, M) \xrightarrow{\text{Res}} H_T^r(H, M)
\end{align*}
\end{proposition}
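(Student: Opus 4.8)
The plan is to argue by induction on $r$. Since $r>0$, every Tate group that appears equals an ordinary group cohomology group $H^r(-,-)$, and $\on{Inf}$, $\on{Res}$ are the maps furnished by Proposition~\ref{cor res general}; because those maps are extended to all degrees by the uniform dimension shift of that proposition, they commute with the connecting homomorphisms of long exact sequences, which is what will make the induction work.

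\emph{Base case $r=1$.} The hypothesis is vacuous, so one works directly with inhomogeneous $1$-cocycles (crossed homomorphisms). That $\on{Res}\circ\on{Inf}=0$ is clear, since an inflated cocycle $g\mapsto\varphi(gH)$ restricts to the zero map on $H$. For injectivity of $\on{Inf}$: if the inflation of a cocycle $\varphi\colon G/H\to M^H$ is a coboundary $g\mapsto gm-m$, then evaluating at $h\in H$ gives $hm-m=0$, so $m\in M^H$ and $\varphi$ was already a coboundary on $G/H$. For exactness at $H^1(G,M)$: given a cocycle $\psi\colon G\to M$ whose restriction to $H$ is the coboundary $h\mapsto hm_0-m_0$, replace $\psi$ by the cohomologous cocycle $\psi'(g)=\psi(g)-(gm_0-m_0)$, which vanishes on $H$; the cocycle identity together with normality of $H$ then forces $\psi'$ to be constant on the cosets $gH$ and to take values in $M^H$, so $\psi'$ is the inflation of a cocycle on $G/H$.

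\emph{Inductive step.} Suppose $r\ge 2$ and the statement holds for $r-1$; let $M$ be a $G$-module with $H^i(H,M)=0$ for $0<i<r$. Embed $M$ into the coinduced module $M_*=\Hom_\ZZ(\ZZ[G],\underline M)$, where $\underline M$ is the underlying abelian group and $(gf)(x)=f(xg)$, via $m\mapsto(x\mapsto xm)$, and set $M'=M_*/M$, so that $0\to M\to M_*\to M'\to 0$ is exact. I would then record three facts. (i) $M_*$ is coinduced over $G$, hence $H^i(G,M_*)=0$ for $i>0$; and since $\ZZ[G]$ is free as a right $\ZZ[H]$-module, $M_*$ restricted to $H$ is again coinduced (from the trivial group), so $H^i(H,M_*)=0$ for $i>0$ as well. (ii) Taking $H$-invariants of the short exact sequence and using $H^1(H,M)=0$ (valid since $r\ge 2$), the sequence $0\to M^H\to M_*^H\to M'^H\to 0$ of $G/H$-modules is exact. (iii) An $H$-fixed $f\in M_*$ satisfies $f(xh)=f(x)$, hence factors through $\ZZ[G]\surjto\ZZ[G/H]$; normality of $H$ shows the residual $G$-action on $M_*^H$ descends to $G/H$ and is the right-regular action, so $M_*^H\cong\on{Coind}_1^{G/H}(\underline M)$ and $H^i(G/H,M_*^H)=0$ for $i>0$.

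From the long exact sequences of $0\to M\to M_*\to M'\to 0$ over $G$ and over $H$, and of $0\to M^H\to M_*^H\to M'^H\to 0$ over $G/H$, the vanishing in (i) and (iii) (using $r-1\ge 1$) makes the connecting maps into isomorphisms $H^{r-1}(G,M')\cong H^r(G,M)$, $H^{r-1}(H,M')\cong H^r(H,M)$, and $H^{r-1}(G/H,M'^H)\cong H^r(G/H,M^H)$; since $\on{Inf}$ and $\on{Res}$ are compatible with these connecting maps, this identifies the level-$r$ inflation-restriction sequence for $M$ with the level-$(r-1)$ one for $M'$. The long exact sequence over $H$ also yields $H^i(H,M')\cong H^{i+1}(H,M)=0$ for $0<i<r-1$, so $M'$ satisfies the inductive hypothesis; its level-$(r-1)$ sequence is therefore exact, and hence so is the level-$r$ sequence for $M$. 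The step I expect to be the main obstacle is fact (iii) — pinning down $M_*^H$ as a coinduced $G/H$-module, which is exactly where normality of $H$ is used essentially — together with the bookkeeping needed to be sure the maps in all three long exact sequences are genuinely compatible with $\on{Inf}$ and $\on{Res}$, so that the comparison of the two inflation-restriction sequences really is a commutative ladder of isomorphisms.
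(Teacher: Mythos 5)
Your proof is correct and follows essentially the same route as the paper: an explicit cocycle computation for the base case $r=1$, then induction by dimension shifting along a short exact sequence whose middle term is cohomologically trivial for $G$, $H$, and (after taking $H$-invariants) $G/H$. The only difference is cosmetic --- you embed $M$ in the coinduced module $\Hom_\ZZ(\ZZ[G],M)$ whereas the paper uses $0 \ra M \ra \ZZ[G]\otimes M \ra J_G \otimes M \ra 0$ (isomorphic devices for finite $G$) --- and you usefully spell out the points the paper leaves implicit, namely exactness of the invariant sequence via $H^1(H,M)=0$, the identification of the invariants of the middle term as a coinduced $G/H$-module (where normality enters), and the compatibility of $\on{Inf}$ and $\on{Res}$ with the connecting isomorphisms.
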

\begin{proof}
We prove the $r=1$ case and then induct by dimension-shifting. For $r=1$, suppose $\varphi \in Z^1(G,M)$ satisfies $\forall h \in H, \varphi(h) = hm - m$. Let $\varphi_0= \varphi - (gm - m)$, which satisfies $[\varphi] = [\varphi_0]$. Then $\forall h \in H, \varphi_0(h)=0$ and thus it induces $\hat \varphi_0: G/H \rightarrow M^H$, whose inflation is $\varphi_0$. This establishes exactness at $H^1(G,M)$. For exactness at $H^r(G/H,M^H)$, suppose $\varphi: G/H \rightarrow M^H$ satisfies $\on{Inf}(\varphi): g \mapsto gm-m$ with $m \in M^H$ because $\on{Inf}(\varphi) : h \mapsto 0$. Then $\varphi([g]) = \on{Inf}(\varphi) = gm - m = [g] m - m$ as desired.

The inductive step is dimension shift; suppose the hypothesis is true for $r-1$ with $r>1$. Then consider exact sequence
\begin{align*}
& 0 \ra M \ra \ZZ[G] \otimes M \ra J_G \otimes M \ra 0 \\
\implies & H_T^r(G,M) \cong H_T^{r-1}(G,J_G \otimes M)
\end{align*}
from which we get
\begin{align*}
& 0 \ra H_T^{r-1} (G/H, (J_G \otimes M)^H) \xrightarrow{\text{Inf}} H_T^{r-1} (G, J_G \otimes M) \xrightarrow{\text{Res}} H_T^{r-1} (H, J_G \otimes M) \text{ is exact} \\
\implies & 0 \ra H_T^r(G/H, M^H) \xrightarrow{\text{Inf}} H_T^r(G, M) \xrightarrow{\text{Res}} H_T^r(H, M) \text{ is exact}
\end{align*}
and similarly we can use exact sequence
$$0 \ra I_G \otimes M \ra \ZZ[G] \otimes M \ra M \ra 0$$
to dimension-shift from $r=1$ to $r=0, -1, \cdots$.
\end{proof}

\subsection{Cohomology of Profinite Groups}

We define cohomology of profinite groups (infinite Galois groups) too. A profinite group is a Hausdorff, compact, totally disconnected group. We restrict our scope to discrete module over a profinite group, which means to say that group action $G \times M \ra M$ is a continuous map when $M$ is given the discrete topology. This is equivalent to saying that $M = \bigcup M^H$ ($H$ running over open normal subgroups) or stabilizer of any element in $M$ is an open subgroup of $G$.

The category of discrete $G$-modules is a category with enough injectives that we can define cohomology groups by taking injective resolutions. The cohomology groups must be calculated using continuous cocycles and coboundaries. 

The following proposition relates cohomology of a profinite group to its finite quotients, and it will be useful for us later when we relate cohomology of an infinite Galois extension to cohomology of finite Galois extension.
\begin{proposition}\label{profinite limit}
For $M$ a discrete module over a profinite group $G$, we have an isomorphism:
\begin{align*}
H^r(G,M) \cong \lim_\rightarrow H^r(G/H,M^H)
\end{align*}
where $H$ are open normal subgroups and $H^r(G/H,M^H)$ are connected by inflation maps.
\end{proposition}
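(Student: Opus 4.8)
The plan is to establish the isomorphism $H^r(G,M) \cong \varinjlim_H H^r(G/H, M^H)$ by constructing the direct limit map explicitly and then checking it is both injective and surjective using the description of cohomology via continuous cochains. First I would recall that for each open normal subgroup $H$, the pair $(G \twoheadrightarrow G/H, M^H \hookrightarrow M)$ satisfies the compatibility condition of Proposition~\ref{cor res general}, so there is an inflation map $\on{Inf}_H : H^r(G/H, M^H) \to H^r(G,M)$. For $H' \subseteq H$ the inflation maps are compatible with $\on{Inf}_{H'/H}$, so the $H^r(G/H, M^H)$ form a directed system over the poset of open normal subgroups ordered by reverse inclusion, and the $\on{Inf}_H$ assemble into a canonical map $\Theta : \varinjlim_H H^r(G/H, M^H) \to H^r(G,M)$. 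The goal is to show $\Theta$ is an isomorphism.

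The key input is that $H^r(G,M)$ is computed by \emph{continuous} inhomogeneous cochains $\varphi : G^r \to M$, where $G$ carries its profinite topology and $M$ the discrete topology. Since $G^r$ is compact and $M$ discrete, the image of any continuous $\varphi$ is finite, and $\varphi$ is locally constant; hence there is an open normal subgroup $H$ such that $\varphi$ factors through $(G/H)^r$ and moreover takes values in $\bigcup_{H} M^H = M$, so after possibly shrinking $H$ we may assume $\varphi$ has values in $M^H$. For \textbf{surjectivity}: given a continuous cocycle $\varphi$ representing a class in $H^r(G,M)$, pick such an $H$; then $\varphi$ is the inflation of a cocycle $\bar\varphi \in Z^r(G/H, M^H)$ (the cocycle condition descends), so $[\varphi]$ lies in the image of $\Theta$. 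For \textbf{injectivity}: suppose $\bar\varphi \in Z^r(G/H, M^H)$ has $\on{Inf}_H(\bar\varphi) = d\psi$ for some continuous cochain $\psi : G^{r-1} \to M$; choose an open normal $H' \subseteq H$ through which $\psi$ factors with values in $M^{H'}$, and then the image of $\bar\varphi$ in $H^r(G/H', M^{H'})$ is $d\bar\psi$, hence zero, so the class in the direct limit vanishes.

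The main obstacle — and the one step I would be careful about — is the claim that a continuous cocycle representing a given class can be chosen so that it \emph{itself} (not merely some cohomologous cochain) factors through a finite quotient with values in some $M^H$: this uses that $\varphi$ is continuous from a compact space to a discrete space, giving finite image and a single open normal $H$ that works for all finitely many coordinates simultaneously (intersect the finitely many open stabilizers and the open normal subgroup trivializing the finite image). A secondary subtlety is verifying that the directed system is genuinely directed — any two open normal subgroups $H_1, H_2$ have $H_1 \cap H_2$ open normal and contained in both — and that the transition maps $H^r(G/H_1, M^{H_1}) \to H^r(G/(H_1\cap H_2), M^{H_1 \cap H_2})$ are again inflation maps, so that the colimit is well-defined and $\Theta$ is a map out of it; this is a routine application of the functoriality in Proposition~\ref{cor res general} together with transitivity of inflation. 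With these in hand, surjectivity and injectivity of $\Theta$ complete the proof.
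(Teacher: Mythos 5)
Your proposal is correct and follows essentially the same route as the paper: the crucial point in both arguments is that a continuous cochain $G^r \to M$ has finite image (compact source, discrete target) and therefore factors through $(G/H)^r$ with values in $M^H$ for a single open normal subgroup $H$. The only cosmetic difference is that the paper packages this at the level of cochain complexes, establishing $\lim_{\rightarrow} C^r(G/H,M^H) \cong C^r(G,M)$ and then invoking compatibility of direct limits with taking cohomology, whereas you unwind that last step by verifying surjectivity and injectivity directly on cohomology classes (your injectivity argument, factoring the coboundary witness through a further finite quotient, is exactly what that compatibility amounts to here).
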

\begin{proof}
Suppose $\varphi: G^r \rightarrow M$ is a continuous cochain. $\varphi(G^r)$ is compact because $G^r$ is compact. $\varphi(G^r)$ is discrete because $M$ is discrete. Thus $\varphi(G^r)$ is a finite set, and is contained in some $M^{H_0}$ for some open normal subgroup $H_0$ because $M=\bigcup M^{H}$ and intersection of normal subgroups is normal. For each $m \in M$, $\varphi^{-1}(m)$ is open and it contains a translate of $H(m)^r$ for some open normal subgroup $H(m)$. $\varphi$ then factors through $G / \left(\bigcap_{m \in \varphi(G^r)} H(m)\right)$, an open normal subgroup. Letting $H = H_0 \cap \bigcap_{m \in \varphi(G^r)} H(m)$, we see that $\varphi$ factors through some $\tilde \varphi : (G/H)^r \rightarrow M^H$. Thus every continuous cochain arises as an inflation of some map $(G/H)^r \ra M^H$. This gives a surjection
$$\lim_\rightarrow C^r( G/H, M^H) \ra C^r(G,M)$$
This map is injection too. As taking limit is compatible with taking cohomology of a complex, we have then
$$H^r(G,M) \cong \lim_\rightarrow H^r(G/H, M^H)$$
\end{proof}

\subsection{Cohomology Computations}

We discuss several computations of cohomology. Some cohomology of induced modules, coinduced modules, number field (as a Galois module), $\ZZ, \QQ, \QQ/\ZZ$ can be computed.
\begin{proposition}[Shapiro's Lemma]
If $H \le G$ is a subgroup,
\begin{align*}
H^r(H,M) &\cong H^r(G, \Hom_{\ZZ[H]}(\ZZ[G],M) ) \\
H_r(H,M) &\cong H_r(G, \ZZ[G] \otimes_{\ZZ[H]} M )
\end{align*}
\end{proposition}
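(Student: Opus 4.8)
The plan is to prove both isomorphisms by exhibiting a projective resolution over $\ZZ[G]$ whose restriction to $\ZZ[H]$ is still projective, and then identifying the relevant Hom and tensor complexes via adjunction. First I would take a projective (indeed free) resolution $P_\bullet \to \ZZ \to 0$ of $\ZZ$ as a $\ZZ[G]$-module. Since $\ZZ[G]$ is free as a $\ZZ[H]$-module (a basis being any set of right coset representatives of $H$ in $G$), each $P_n$ is also free, hence projective, as a $\ZZ[H]$-module. Therefore $P_\bullet \to \ZZ \to 0$ is simultaneously a projective resolution of $\ZZ$ over $\ZZ[G]$ and over $\ZZ[H]$, so it may be used to compute both $H^r(G,-)$, $H^r(H,-)$ and $H_r(G,-)$, $H_r(H,-)$.

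For the cohomology statement, I would apply $\Hom_{\ZZ[G]}(-,\Hom_{\ZZ[H]}(\ZZ[G],M))$ to $P_\bullet$ and invoke Adjoint Associativity (Proposition~\ref{adjoint associativity}) with $R_1 = \ZZ[G]$, $R_2 = \ZZ[H]$, $A_1 = P_n$, $A_2 = \ZZ[G]$ viewed as a $(\ZZ[H],\ZZ[G])$-bimodule, and $A_3 = M$:
\begin{align*}
\Hom_{\ZZ[G]}(P_n, \Hom_{\ZZ[H]}(\ZZ[G],M)) \cong \Hom_{\ZZ[H]}(\ZZ[G] \otimes_{\ZZ[G]} P_n, M) \cong \Hom_{\ZZ[H]}(P_n, M).
\end{align*}
These isomorphisms are natural in $P_n$, hence assemble into an isomorphism of cochain complexes; taking cohomology gives $H^r(G,\Hom_{\ZZ[H]}(\ZZ[G],M)) \cong H^r(H,M)$. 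For homology, I would instead apply $(\ZZ \otimes_{\ZZ[G]} -)$ to $P_\bullet \otimes_{\ZZ[H]}(\ZZ[G]\otimes_{\ZZ[H]}M)$—more precisely, I would use the chain of natural isomorphisms $\ZZ \otimes_{\ZZ[G]} \bigl(\ZZ[G]\otimes_{\ZZ[H]} M\bigr)\otimes\cdots$; the cleanest route is to observe $P_n \otimes_{\ZZ[G]} (\ZZ[G]\otimes_{\ZZ[H]} M) \cong P_n \otimes_{\ZZ[H]} M$ via $p \otimes (g \otimes m) \mapsto pg \otimes m$, natural in $P_n$, so the two homology complexes coincide and $H_r(G,\ZZ[G]\otimes_{\ZZ[H]}M) \cong H_r(H,M)$.

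The main obstacle is bookkeeping rather than conceptual: one must be careful that $\ZZ[G]$ carries the correct bimodule structure so that Adjoint Associativity applies verbatim (the right $\ZZ[G]$-action used to form $\Hom_{\ZZ[G]}$ against $P_n$ versus the left $\ZZ[H]$-action giving the coinduced module), and that the naturality square in each $P_n$ genuinely commutes with the differentials $d_n$. I would check the latter by noting that all isomorphisms invoked are the canonical adjunction/base-change maps, which are natural transformations of functors in the first argument, so they automatically commute with any morphism of resolutions, in particular with $d_\bullet$. Once naturality is in hand, passing to (co)homology is immediate and the proof concludes.
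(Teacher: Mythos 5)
Your proposal is correct and follows essentially the same route as the paper: restrict a $\ZZ[G]$-projective resolution of $\ZZ$ to $\ZZ[H]$ (using that $\ZZ[G]$ is $\ZZ[H]$-free), identify $\Hom_{\ZZ[G]}(P_n,\Hom_{\ZZ[H]}(\ZZ[G],M))\cong\Hom_{\ZZ[H]}(P_n,M)$ via adjoint associativity, and $P_n\otimes_{\ZZ[G]}(\ZZ[G]\otimes_{\ZZ[H]}M)\cong P_n\otimes_{\ZZ[H]}M$ via associativity of tensor, then pass to (co)homology. Your added attention to naturality of these identifications with respect to the differentials is a welcome refinement of the same argument, not a different one.
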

\begin{proof}
Suppose $P_\bullet \ra \ZZ \ra 0$ is a projective resolution of $\ZZ[G]$-modules. As $\ZZ[G]$ is a free $\ZZ[H]$-module, this is also a projective resolution of $\ZZ[H]$-modules. Apply now the functors $\Hom_{\ZZ[H]}(-,M)$ and $\Hom_{\ZZ[G]}(-,\Hom_{\ZZ[H]}(\ZZ[G],M)))$, and apply adjoint associativity (Proposition~\ref{adjoint associativity}):
\begin{align*}
\Hom_{\ZZ[G]}(P_n, \Hom_{\ZZ[H]}(\ZZ[G],M)) \cong \Hom_{\ZZ[H]}(\ZZ[G] \otimes_{\ZZ[G]} P_n, M) \cong \Hom_{\ZZ[H]}(P_n, M)
\end{align*}
which gives isomorphism of two chain complexes, which implies that their cohomology groups, i.e. $H^r(H,M)$ and $H^r(G,\Hom_{\ZZ[H]}(\ZZ[G],M))$ are isomorphic.

We can also apply functors $\otimes_{\ZZ[G]}(\ZZ[G] \otimes_{\ZZ[H]} M)$ and $\otimes_{\ZZ[H]} M$ to $P_\bullet \ra \ZZ \ra 0$ to get chain complexes that are quite obviously equal because 
$$A \otimes_{\ZZ[G]}(\ZZ[G] \otimes_{\ZZ[H]} M) \cong (A \otimes_{\ZZ[G]}\ZZ[G]) \otimes_{\ZZ[H]} M \cong A \otimes_{\ZZ[H]} M$$
implying that $H_r(H,M) \cong H_r(G,\ZZ[G] \otimes_{\ZZ[H]} M)$ too.
\end{proof}

\begin{corollary}
\begin{align*}
& H^r(G,\Hom_\ZZ(\ZZ[G],M)) = 0 \\
& H_r(G,\ZZ[G]\otimes_\ZZ M) = 0
\end{align*}
\end{corollary}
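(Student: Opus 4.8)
The assertion is precisely Shapiro's Lemma applied to the trivial subgroup, combined with the fact that $\ZZ$ is free over itself. The plan is as follows. First I would take $H=1$ in Shapiro's Lemma. Since $\ZZ[H]=\ZZ[1]=\ZZ$, the coinduced module $\Hom_{\ZZ[H]}(\ZZ[G],M)$ becomes $\Hom_\ZZ(\ZZ[G],M)$ and the induced module $\ZZ[G]\otimes_{\ZZ[H]}M$ becomes $\ZZ[G]\otimes_\ZZ M$, so Shapiro's Lemma yields the isomorphisms
$$H^r(G,\Hom_\ZZ(\ZZ[G],M)) \cong H^r(1,M), \qquad H_r(G,\ZZ[G]\otimes_\ZZ M) \cong H_r(1,M).$$

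Next I would unwind the right-hand sides. By definition $H^r(1,M)=\Ext_{\ZZ[1]}^r(\ZZ,M)=\Ext_\ZZ^r(\ZZ,M)$ and $H_r(1,M)=\Tor_r^{\ZZ[1]}(\ZZ,M)=\Tor_r^\ZZ(\ZZ,M)$, where over the trivial group the action on $\ZZ$ is automatically trivial. Now $\ZZ$ is free over $\ZZ$, hence projective, hence flat; by the propositions characterizing projective and flat modules via vanishing of $\Ext$ and $\Tor$ in positive degrees, we get $\Ext_\ZZ^r(\ZZ,M)=0$ and $\Tor_r^\ZZ(\ZZ,M)=0$ for all $r\ge 1$. (Concretely, $0\ra\ZZ\xrightarrow{\Id}\ZZ\ra 0$ is a projective resolution of $\ZZ$ over $\ZZ$, concentrated in degree $0$.) Feeding this back through the displayed isomorphisms gives $H^r(G,\Hom_\ZZ(\ZZ[G],M))=0$ and $H_r(G,\ZZ[G]\otimes_\ZZ M)=0$ for $r\ge 1$, which is the claim; the degree $r=0$ is of course excluded, since there $H^0(1,M)=M$.

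There is no genuine obstacle here; the only points to watch are the bookkeeping of the degree range (the vanishing is for positive $r$) and the harmless identifications of $H^\bullet(1,-)$ and $H_\bullet(1,-)$ with $\Ext_\ZZ^\bullet(\ZZ,-)$ and $\Tor_\bullet^\ZZ(\ZZ,-)$. Should one wish to avoid invoking Shapiro's Lemma, the same conclusion follows directly: starting from a $\ZZ[G]$-projective resolution $P_\bullet\ra\ZZ\ra 0$, adjoint associativity (Proposition~\ref{adjoint associativity}) gives $\Hom_{\ZZ[G]}(P_n,\Hom_\ZZ(\ZZ[G],M))\cong\Hom_\ZZ(P_n,M)$ and $P_n\otimes_{\ZZ[G]}(\ZZ[G]\otimes_\ZZ M)\cong P_n\otimes_\ZZ M$, and since each $P_n$ is $\ZZ[G]$-free it is in particular $\ZZ$-free, so $P_\bullet$ is a $\ZZ$-free resolution of $\ZZ$; the resulting complexes therefore compute $\Ext_\ZZ^\bullet(\ZZ,M)$ and $\Tor_\bullet^\ZZ(\ZZ,M)$, which vanish in positive degrees.
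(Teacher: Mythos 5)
Your proposal is correct and is exactly the argument the paper intends: the corollary is Shapiro's Lemma specialized to $H=1$, where $H^r(1,M)=\Ext_\ZZ^r(\ZZ,M)$ and $H_r(1,M)=\Tor_r^\ZZ(\ZZ,M)$ vanish in positive degrees because $\ZZ$ is projective and flat over itself. Your remark that the vanishing holds only for $r\ge 1$ (since $H^0(1,M)=M$) is a correct and worthwhile precision of the paper's statement.
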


\begin{proposition}
For a $G$-module $M$,
$$H_1(G,\ZZ) \cong I_G / I_G^2 \cong G^{\on{ab}} = G / \langle\{(gh)/(hg) | g, h \in G\} \rangle$$
\end{proposition}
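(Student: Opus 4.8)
The plan is to prove the two isomorphisms $H_1(G,\ZZ)\cong I_G/I_G^2$ and $I_G/I_G^2\cong G^{\on{ab}}$ separately. The first is a short Tor computation built from the fundamental sequence $0\to I_G\to\ZZ[G]\xrightarrow{\epsilon}\ZZ\to 0$, using that $\ZZ[G]$ is flat. The second is elementary group theory: I would write down mutually inverse maps between $I_G/I_G^2$ and $G^{\on{ab}}=G/\langle (gh)(hg)^{-1}\rangle$, leaning on the fact (already established in the excerpt) that $I_G$ is a free abelian group on $\{g-1 : g\in G,\ g\neq 1\}$.

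For the first isomorphism, recall that $H_1(G,\ZZ)=\Tor_1^{\ZZ[G]}(\ZZ,\ZZ)$. Applying $\Tor_*^{\ZZ[G]}(\ZZ,-)$ to $0\to I_G\to\ZZ[G]\to\ZZ\to 0$ gives the exact sequence
\begin{align*}
\Tor_1^{\ZZ[G]}(\ZZ,\ZZ[G])\to \Tor_1^{\ZZ[G]}(\ZZ,\ZZ)\xrightarrow{\partial}\ZZ\otimes_{\ZZ[G]}I_G\to\ZZ\otimes_{\ZZ[G]}\ZZ[G]\to\ZZ\otimes_{\ZZ[G]}\ZZ\to 0.
\end{align*}
Since $\ZZ[G]$ is free, hence flat, the first term vanishes, so $\partial$ is injective. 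Under the identification $M_G\cong\ZZ\otimes_{\ZZ[G]}M$ the third arrow is the map on coinvariants $(I_G)_G\to(\ZZ[G])_G$ induced by $I_G\hookrightarrow\ZZ[G]$; but $(\ZZ[G])_G=\ZZ[G]/I_G\ZZ[G]=\ZZ[G]/I_G$, and the image of $I_G$ in this quotient is $0$, so that arrow is the zero map. Exactness then forces $\partial$ to be surjective onto $\ZZ\otimes_{\ZZ[G]}I_G$, whence $H_1(G,\ZZ)\cong\ZZ\otimes_{\ZZ[G]}I_G\cong(I_G)_G=I_G/I_GI_G=I_G/I_G^2$.

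For the second isomorphism, define $\lambda\colon G\to I_G/I_G^2$ by $\lambda(g)=(g-1)+I_G^2$. The identity $(gh-1)-(g-1)-(h-1)=(g-1)(h-1)\in I_G^2$ shows $\lambda$ is a homomorphism; since the target is abelian, $\lambda$ annihilates every $(gh)(hg)^{-1}$ and thus factors through $G^{\on{ab}}$, giving $\bar\lambda\colon G^{\on{ab}}\to I_G/I_G^2$. Surjectivity is clear because the $g-1$ span $I_G$ as an abelian group, so their classes span $I_G/I_G^2$. For injectivity I would construct the inverse explicitly: since $I_G$ is free abelian on $\{g-1: g\neq 1\}$, the rule $g-1\mapsto\bar g\in G^{\on{ab}}$ extends uniquely to a $\ZZ$-linear $\psi\colon I_G\to G^{\on{ab}}$, and the same identity $(g-1)(h-1)=(gh-1)-(g-1)-(h-1)$ gives $\psi((g-1)(h-1))=\overline{gh}-\bar g-\bar h=0$, so $\psi$ kills $I_G^2$ and descends to $\bar\psi\colon I_G/I_G^2\to G^{\on{ab}}$. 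Checking $\bar\psi\circ\bar\lambda=\mathrm{id}$ and $\bar\lambda\circ\bar\psi=\mathrm{id}$ on the generators $\bar g$ and $(g-1)+I_G^2$ respectively completes the argument.

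The whole computation is essentially formal; the only real inputs are flatness of $\ZZ[G]$ and $\ZZ$-freeness of $I_G$, both already available. The step most likely to need care is the vanishing of the induced map $(I_G)_G\to(\ZZ[G])_G$ in the long exact sequence — i.e.\ being sure the connecting map $\partial$ is positioned so that $H_1(G,\ZZ)$ is \emph{all} of $\ZZ\otimes_{\ZZ[G]}I_G$ and not a proper subquotient — together with the routine bookkeeping that $\lambda$ and $\psi$ remain two-sided inverses after passing to the quotients.
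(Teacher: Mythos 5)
Your proof is correct and follows essentially the same route as the paper: the first isomorphism via the Tor long exact sequence for $0 \to I_G \to \ZZ[G] \to \ZZ \to 0$ (with the induced map $(I_G)_G \to (\ZZ[G])_G$ vanishing), and the second via the mutually inverse maps $g \mapsto (g-1)+I_G^2$ and $g-1 \mapsto \bar g$. Your version is in fact slightly more carefully spelled out than the paper's (which only sketches the dimension shift), and using abelianness of the target to factor through $G^{\on{ab}}$ cleanly replaces the paper's explicit commutator identity.
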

\begin{proof}
The first isomorphism holds by dimension-shift on the exact sequence:
$$0 \ra I_G \ra \ZZ[G] \ra \ZZ \ra 0$$
($H_0(G,\ZZ[G]) \ra H_0(G,\ZZ)$ is a zero map) 

The second isomorphism can be obtained by the mapping
\begin{align*}
& G \rightarrow I_G / I_G^2 \\
& g \mapsto [g - 1]
\end{align*}
The map is a homomorphism because $[gh -1] = [(g-1)(h-1) + (g-1) + (h-1)] = [g-1] + [h-1]$.

As $ghg^{-1}h^{-1} - 1 = (gh-1)(g^{-1}h^{-1}-1)+(g-1)(h-1)+(g^{-1}-1)(h^{-1}-1)-(g-1)(g^{-1}-1)-(h-1)(h^{-1}-1)$, we see that this factors through $G^{\on{ab}}$.

Let's construct inverse by mapping $g-1 \in I_G$ to $[g] \in G^{\on{ab}}$. As $(g-1)(h-1) = (gh-1)+(g-1)+(h-1)$, the map factors through $I_G^2$, and it can be checked directly that composition of this map with the earlier map is identity.
\end{proof}

In our context we will frequently consider $H^r(G,M)$ when $G = \Gal(L/K)$ and $M= L$ or $L^\times$ where $L,K$ are number fields. Therefore the following are fundamental:
\begin{proposition}[Hilbert's Theorem 90]
For a (possibly infinite) Galois extension $L/K$ and any closed subgroup $H \le \Gal(L/K)$,
$$H^1(H,L^\times)=0$$
\end{proposition}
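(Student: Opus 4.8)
The plan is to prove the statement first for a finite Galois extension $L/K$, and then deduce the general case by passing to the limit using Proposition~\ref{profinite limit}. For the finite case, let $G = \Gal(L/K)$ and let $\varphi : G \to L^\times$ be a $1$-cocycle, which we write multiplicatively: the cocycle condition reads $\varphi(\sigma\tau) = \varphi(\sigma)\cdot \sigma(\varphi(\tau))$. We want to show $\varphi$ is a coboundary, i.e. that there exists $\beta \in L^\times$ with $\varphi(\sigma) = \beta / \sigma(\beta)$ (equivalently $\sigma(\beta) \cdot \varphi(\sigma) = \beta$). The classical device is to produce such a $\beta$ by averaging: set $\beta = \sum_{\tau \in G} \varphi(\tau)\, \tau(\gamma)$ for a suitable $\gamma \in L$. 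One then checks, using the cocycle identity and relabelling the summation index, that $\sigma(\beta)\cdot\varphi(\sigma) = \beta$ for every $\sigma \in G$.

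First I would establish that $\beta$ can be chosen nonzero for some $\gamma$. This is exactly Dedekind's lemma on linear independence of characters: the distinct automorphisms $\tau \in G$, viewed as maps $L^\times \to L^\times \subset L$, are linearly independent over $L$, so the $L$-linear combination $\gamma \mapsto \sum_{\tau} \varphi(\tau)\,\tau(\gamma)$ is not identically zero (the coefficients $\varphi(\tau)$ are all nonzero since they lie in $L^\times$). Pick $\gamma$ with $\beta \neq 0$; then $\beta \in L^\times$, and the computation above shows $\varphi(\sigma) = \beta/\sigma(\beta)$, so $[\varphi] = 0$ in $H^1(G, L^\times)$. This handles all finite Galois $L/K$, and by restricting scalars it handles $H^1(H, L^\times)$ for any subgroup $H \le G$ as well.

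For the general case, suppose $L/K$ is an arbitrary (possibly infinite) Galois extension and $H \le \Gal(L/K)$ is a closed subgroup. Then $H$ is itself a profinite group, $H = \Gal(L/L^H)$, and $L^\times$ is a discrete $H$-module: every element of $L$ lies in a finite subextension, hence is fixed by an open subgroup. By Proposition~\ref{profinite limit},
\[
H^1(H, L^\times) \cong \varinjlim_{N} H^1(H/N, (L^\times)^N),
\]
the colimit taken over open normal subgroups $N \trianglelefteq H$. Each $(L^\times)^N = (L^N)^\times$, and $H/N = \Gal(L^N / L^H)$ is a finite group acting on the units of the finite Galois extension $L^N/L^H$, so the finite case gives $H^1(H/N,(L^N)^\times) = 0$ for every such $N$. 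A colimit of zero groups is zero, hence $H^1(H, L^\times) = 0$.

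The main obstacle is the input from field theory rather than homological algebra: the crux is the nonvanishing of the averaged element $\beta$, which rests on Dedekind's linear independence of characters. Everything else — the cocycle manipulation verifying $\sigma(\beta)\varphi(\sigma) = \beta$, and the limit argument — is formal, the latter being a direct application of the already-established Proposition~\ref{profinite limit} once one observes that $L^\times$ is discrete and that fixed points of units are units of the fixed field.
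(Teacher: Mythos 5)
Your proof is correct and follows essentially the same route as the paper: the finite case via the averaging element $\beta = \sum_{\tau}\varphi(\tau)\tau(\gamma)$, nonzero by linear independence of automorphisms, and the infinite case by passing to the colimit over finite quotients using Proposition~\ref{profinite limit}. The only cosmetic difference is that you apply the limit argument directly to the closed subgroup $H = \Gal(L/L^H)$, while the paper first reduces to $H = \Gal(L/K')$ and takes the limit over finite subextensions; these are the same argument.
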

\begin{proof}
Firstly assume $L/K$ is finite Galois. We want to show that crossed homomorphism $\varphi: H \ra L^\times$ is principal. By linear independence of automorphisms, let $b = \sum_{\sigma \in H} \varphi(\sigma) \sigma (a)$ for some $a \in L^\times$. Then for $\tau \in H$, $\varphi(\tau) = \tau(b^{-1}) / b^{-1}$ since
\begin{align*}
\tau b &= \sum_{\sigma \in H} (\tau \cdot \varphi(\sigma)) \tau \sigma(a) = \sum_{\sigma \in H} \varphi(\tau)^{-1} \varphi(\tau \sigma) \tau \sigma(a) = \varphi(\tau)^{-1} \sum_{\sigma \in H} \varphi(\tau \sigma) \tau \sigma(a) = \varphi(\tau)^{-1} b
\end{align*}

Now consider a general Galois extension $L/K$. We may assume $H=\Gal(L/K)$ since a closed subgroup is of the form $\Gal(L/K')$, where $L/K'$ is also Galois. The proof completes by:
$$H^1( \Gal(L/K), L^\times) \cong \lim_\rightarrow H^1(\Gal(K'/K), (K')^\times) = 0$$
\end{proof}

\begin{proposition}[Normal Basis Theorem]
For a finite Galois extension $L/K$ and $\forall r>0$,
$$H^r(\Gal(L/K),L)=0$$
\end{proposition}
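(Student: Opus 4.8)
The plan is to reduce the statement to the cohomological triviality of an induced module. Write $G := \Gal(L/K)$. The key input is the classical Normal Basis Theorem from Galois theory: there exists $\alpha \in L$ such that $\{\sigma(\alpha) : \sigma \in G\}$ is a $K$-basis of $L$. Equivalently, the $K[G]$-linear map $K[G] \to L$, $\sum_{\sigma} a_\sigma \sigma \mapsto \sum_\sigma a_\sigma \sigma(\alpha)$, is an isomorphism. Restricting scalars along $\ZZ[G] \injto K[G]$, this gives an isomorphism of $G$-modules $L \cong K[G]$, where $G$ acts on $K[G]$ by left translation.

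Next I would recognize $K[G]$ as an induced module. Letting $K$ also denote the underlying abelian group of $K$ equipped with the trivial $G$-action, the map $\ZZ[G] \otimes_\ZZ K \to K[G]$ sending $\sigma \otimes a \mapsto a\sigma$ (extended by linearity, with $G$ acting on the left tensor factor) is a $G$-module isomorphism, so $L \cong \ZZ[G] \otimes_\ZZ K = \on{Ind}^G(K)$. Because $G$ is finite, the trivial subgroup $1 \le G$ has finite index, so induced and coinduced modules agree: $\ZZ[G] \otimes_\ZZ K \cong \Hom_\ZZ(\ZZ[G], K)$.

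Finally, Shapiro's Lemma applied to $H = 1$ gives, for every $r > 0$,
$$H^r(G, L) \cong H^r\bigl(G, \Hom_\ZZ(\ZZ[G], K)\bigr) \cong H^r(1, K) = 0,$$
the last equality because $\ZZ$ is free over $\ZZ[1] = \ZZ$, so $\Ext^r_\ZZ(\ZZ, K) = 0$ for $r \ge 1$; equivalently one invokes the Corollary to Shapiro's Lemma directly. Every step after the first is a formal application of machinery already developed in the excerpt, so the real content — and the main obstacle, should one decline to cite it — is the classical Normal Basis Theorem itself, i.e.\ the assertion that $L$ is free of rank one over $K[G]$; its proof is a separate matter (nonvanishing of a suitable determinant together with Dedekind's independence of characters in the infinite-field case, and a counting/semisimplicity argument in the finite-field case).
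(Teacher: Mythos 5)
Your proposal is correct and follows essentially the same route as the paper: invoke the classical Normal Basis Theorem to get $L \cong K[G]$ as $G$-modules, identify $K[G]$ with the (co)induced module $\ZZ[G]\otimes_\ZZ K \cong \Hom_\ZZ(\ZZ[G],K)$, and conclude by Shapiro's Lemma that $H^r(G,L) \cong H^r(1,K) = 0$ for $r>0$. Your write-up merely spells out the intermediate identifications more explicitly than the paper does.
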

\begin{proof}
Classical normal basis theorem states that there is $\alpha \in L$ such that its Galois orbit is a basis of the extension. This translates into existence of isomorphism of $G$-modules $L \cong K[G]$
\begin{align*}
\sum_{k=1}^n a_k \sigma_k(\alpha) \mapsto \sum_{k=1}^n a_k \sigma_k
\end{align*}
Therefore cohomologically we see that
\begin{align*}
H^r(G,L) \cong H^r(G, K[G]) \cong H^r(G, \ZZ[G] \otimes_\ZZ K) \cong H^r(G, \Hom_{\ZZ}(\ZZ[G],K)) \cong H^r(1,K) = 0
\end{align*}
\end{proof}

\begin{proposition}\label{integer cohomology}
For a finite group $G$ acting on $\QQ, \ZZ, \QQ/\ZZ$ trivially,
\begin{align*}
\forall r, & H_T^r(G,\QQ)=0 \\
& H_T^0(G,\ZZ) \cong \ZZ/|G| \\
& H_T^1(G,\ZZ)=0 \\
& H_T^2(G, \ZZ) \cong \Hom(G,\QQ/\ZZ)
\end{align*}
\end{proposition}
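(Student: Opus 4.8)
The plan is to deduce everything from the vanishing of $H_T^r(G,\QQ)$ in all degrees together with the short exact sequence of trivial $G$-modules
\[ 0 \ra \ZZ \ra \QQ \ra \QQ/\ZZ \ra 0, \]
which will supply dimension-shift isomorphisms $H_T^r(G,\QQ/\ZZ) \cong H_T^{r+1}(G,\ZZ)$ for every $r \in \ZZ$ once the $\QQ$-terms are killed.

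First I would prove $H_T^r(G,\QQ)=0$ for all $r\in\ZZ$. Note that $H_T^r(1,M)=0$ in every degree for any module $M$: for $r>0$ this is $H^r(1,M)=\Ext_\ZZ^r(\ZZ,M)=0$ (since $\ZZ$ is its own projective $\ZZ[1]$-resolution), for $r<-1$ it is $H_{-(r+1)}(1,M)=\Tor^\ZZ_{-(r+1)}(\ZZ,M)=0$, and for $r\in\{0,-1\}$ it follows directly from the definition since $\Nm_{1}=\on{id}$ and $I_{1}=0$. Applying $\on{Cor}\circ\on{Res}=[G:1]=|G|$ with the trivial subgroup, the composite $H_T^r(G,\QQ)\to H_T^r(1,\QQ)\to H_T^r(G,\QQ)$ is multiplication by $|G|$; since the middle group is $0$, multiplication by $|G|$ is the zero endomorphism of $H_T^r(G,\QQ)$. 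But multiplication by $|G|$ is an automorphism of $\QQ$, hence, by functoriality of $H_T^r(G,-)$, an automorphism of $H_T^r(G,\QQ)$. An abelian group on which one endomorphism is simultaneously zero and bijective is trivial, so $H_T^r(G,\QQ)=0$.

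Plugging this into the long exact Tate sequence of the displayed short exact sequence, the $\QQ$-terms flanking each $\QQ/\ZZ$-term vanish, so the connecting maps are the asserted isomorphisms $H_T^r(G,\QQ/\ZZ)\xrightarrow{\ \sim\ }H_T^{r+1}(G,\ZZ)$. Now I would finish the individual computations. Directly, $H_T^0(G,\ZZ)=\ZZ^G/\Nm_G(\ZZ)=\ZZ/|G|\ZZ$, since the action is trivial so $\ZZ^G=\ZZ$ and $\Nm_G$ is multiplication by $|G|$. For $H_T^1$, the shift gives $H_T^1(G,\ZZ)\cong H_T^0(G,\QQ/\ZZ)=(\QQ/\ZZ)/|G|(\QQ/\ZZ)=0$ as $\QQ/\ZZ$ is divisible (equivalently $H^1(G,\ZZ)=\Hom(G,\ZZ)=0$ since $G$ is finite and $\ZZ$ is torsion-free). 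For $H_T^2$, the shift gives $H_T^2(G,\ZZ)\cong H_T^1(G,\QQ/\ZZ)=H^1(G,\QQ/\ZZ)$; and for any trivial $G$-module $A$ the cocycle identity $\varphi(gh)=g\varphi(h)+\varphi(g)=\varphi(g)+\varphi(h)$ forces $Z^1(G,A)=\Hom(G,A)$ while every coboundary $g\mapsto ga-a$ vanishes, whence $H^1(G,\QQ/\ZZ)=\Hom(G,\QQ/\ZZ)$.

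The only place that needs genuine care is the opening move: one must know that $\on{Cor}$ and the identity $\on{Cor}\circ\on{Res}=[G:H]$ are available in all (including negative) Tate degrees — which is fine, since both are extended to all degrees by the same dimension-shifting procedure used throughout this section — and that $H_T^r(1,M)$ really does vanish in every degree. Everything downstream of that is formal manipulation of the long exact sequence and the definition of $H_T^0$ and $H^1$ on trivial modules.
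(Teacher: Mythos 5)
Your proof is correct and follows essentially the same route as the paper: kill $H_T^r(G,\QQ)$ via $\on{Cor}\circ\on{Res}=|G|$ together with the fact that multiplication by $|G|$ is invertible on $\QQ$, read off $H_T^0(G,\ZZ)=\ZZ/|G|\ZZ$ from the definition, and use $0\ra\ZZ\ra\QQ\ra\QQ/\ZZ\ra0$ to identify $H_T^2(G,\ZZ)$ with $H^1(G,\QQ/\ZZ)=\Hom(G,\QQ/\ZZ)$. The only (harmless) variation is that you obtain $H_T^1(G,\ZZ)=0$ by shifting to $H_T^0(G,\QQ/\ZZ)$ and invoking divisibility, whereas the paper notes directly that a crossed homomorphism into the trivial module $\ZZ$ is a homomorphism from a finite group and hence zero — you mention this alternative yourself.
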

\begin{proof}
Multiplication by $|G|$ on $H_T^r(G,\QQ)$ factors through Res and Cor, and it's isomorphism (inverse is $\frac1{|G|}$). But through Res, we get a zero map. Since zero map is isomorphism, $H_T^r(G,\QQ)=0$.

Since $G$-action on $\ZZ$ is trivial, a 1-cocycle (crossed homomorphism) is a group homomorphism, but since $G$ is torsion, there is no nontrivial map to $\ZZ$. Thus $H^1(G,\ZZ)=0$. $H^0_T(G,\ZZ) = \ZZ / |G|$ follows by definition, considering that $G$-action is trivial. Lastly the exact sequence
$$0 \ra \ZZ \ra \QQ \ra \QQ / \ZZ \ra 0 $$
gives $H^2_T(G,\ZZ) \cong H_T^1(G,\QQ/\ZZ) \cong \Hom(G,\QQ/\ZZ)$ by connecting homomorphism and noting that $G$ acts on $\QQ/\ZZ$ trivially.
\end{proof}

Interestingly, Tate cohomology is 2-periodic for cyclic $G$. Thus only two of the cohomology groups matter, and we can develop some neat results about the ratio $|H_T^1| / |H_T^0|$.

\begin{proposition}
For $M$, a module over finite cyclic group $G$, we have an isomorphism
$$H_T^r(G,M) \cong H_T^{r+2}(G,M) $$
\end{proposition}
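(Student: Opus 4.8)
The plan is to exhibit a periodicity-inducing free resolution of $\ZZ$ as a $\ZZ[G]$-module when $G$ is finite cyclic, say $G = \langle \sigma \rangle$ with $|G| = n$. The key objects are the two elements of $\ZZ[G]$: the ``difference'' $D := \sigma - 1$ and the ``norm'' $N := \Nm_G = 1 + \sigma + \cdots + \sigma^{n-1}$. One checks immediately that $DN = ND = 0$ in $\ZZ[G]$, that $\ker(D : \ZZ[G] \to \ZZ[G]) = \ZZ \cdot N = \on{im}(N)$, and that $\ker(N) = I_G = \on{im}(D)$ (the latter because $I_G$ is free on $\sigma - 1$ and $a \in \ker N \iff \epsilon(a) = 0$). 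These identities assemble into a $2$-periodic free resolution
\begin{align*}
\cdots \xrightarrow{N} \ZZ[G] \xrightarrow{D} \ZZ[G] \xrightarrow{N} \ZZ[G] \xrightarrow{D} \ZZ[G] \xrightarrow{\epsilon} \ZZ \ra 0.
\end{align*}

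First I would verify the three kernel/image identities above, which is routine bookkeeping in $\ZZ[G]$; this establishes that the displayed complex is exact, hence a legitimate free (thus projective) resolution of $\ZZ$. Next, since group cohomology and homology may be computed from \emph{any} projective resolution (this was checked in the ``Independence from choice of resolution'' discussion), I apply $\Hom_{\ZZ[G]}(-, M)$ and $- \otimes_{\ZZ[G]} M$ to this resolution. Using $\Hom_{\ZZ[G]}(\ZZ[G], M) \cong M$ and $\ZZ[G] \otimes_{\ZZ[G]} M \cong M$, the cochain complex becomes $M \xrightarrow{N} M \xrightarrow{D} M \xrightarrow{N} \cdots$ (for cohomology, in degrees $\ge 0$) and similarly $\cdots \xrightarrow{D} M \xrightarrow{N} M \xrightarrow{D} M$ for homology. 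Both are manifestly $2$-periodic, so $H^r(G,M) \cong H^{r+2}(G,M)$ for $r \ge 1$ and $H_r(G,M) \cong H_{r+2}(G,M)$ for $r \ge 1$ at once.

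The remaining task is to patch across the ``junction'' so that the isomorphism holds for \emph{all} $r \in \ZZ$, i.e. to handle $H_T^{-1}, H_T^0, H_T^1$. I would compute directly from the periodic complex: $H_T^0(G,M) = \ker D / \on{im} N = M^G / \Nm_G M$ and $H_T^{-1}(G,M) = \ker N / \on{im} D = \ker \Nm_G / I_G M$, which match the definitions of Tate cohomology given earlier, while $H^1(G,M) = \ker N / \on{im} D$ computed one step further along is visibly the same as $H_T^{-1}$. More cleanly, since the \emph{doubly infinite} complex $\cdots \xrightarrow{N} M \xrightarrow{D} M \xrightarrow{N} M \xrightarrow{D} \cdots$ has homology in each degree equal to the corresponding Tate cohomology group (one checks the three special degrees against the definition, the rest being ordinary $H^r$ or $H_r$), $2$-periodicity of this complex gives the result for every $r \in \ZZ$. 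The main obstacle is purely the junction: making sure the Tate groups $H_T^{-1}$ and $H_T^0$ as \emph{defined} (via $\widehat{\Nm}_G$) really do agree with the homology of the spliced periodic complex at those two spots, rather than being off by the sign conventions or an index shift; everything else is formal.
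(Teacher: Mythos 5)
Your argument is correct, but it takes a genuinely different route from the paper. The paper never writes down the full periodic resolution: it takes the four-term exact sequence $0 \ra \ZZ \xrightarrow{\mu} \ZZ[G] \xrightarrow{\sigma-1} \ZZ[G] \xrightarrow{\epsilon} \ZZ \ra 0$, tensors it with $M$ (exactness preserved since $\ZZ$ and $I_G$ are $\ZZ$-free), observes that the two middle terms $\ZZ[G]\otimes M$ are induced and hence have vanishing Tate cohomology, and then dimension-shifts twice through the Tate long exact sequence; this yields $H_T^r \cong H_T^{r+2}$ uniformly in all degrees, with the junction degrees $r=-1,0$ handled automatically by the machinery already built (splicing of the long exact sequences). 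You instead unroll the same algebraic germ ($D=\sigma-1$, $N=\Nm_G$, $DN=ND=0$, $\ker D=\on{im}N$, $\ker N=\on{im}D=I_G$) into an explicit two-periodic complete resolution and compute: this costs you the manual check at the junction that the spliced doubly infinite complex really computes $H_T^0=M^G/\Nm_G M$ and $H_T^{-1}=\ker\Nm_G/I_GM$ as defined, which you do correctly, and it buys you more than the paper's proof does, namely the explicit description $H_T^{2i}(G,M)\cong M^G/\Nm_G M$ and $H_T^{2i+1}(G,M)\cong \ker\Nm_G/I_GM$ that the paper in effect re-derives later (e.g.\ in the Herbrand quotient computations). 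Two small slips to fix: the cochain complex obtained by applying $\Hom_{\ZZ[G]}(-,M)$ to your resolution starts $M \xrightarrow{D} M \xrightarrow{N} M \ra \cdots$ in degrees $0,1,2,\dots$ (you wrote the maps in the opposite order there, though your junction computation uses the correct convention, and periodicity in degrees $\ge 1$ is unaffected); and $I_G$ is not \emph{free} on $\sigma-1$ --- it is the principal ideal generated by $\sigma-1$ (equivalently, $\ZZ$-free on the $\sigma^i-1$), which is what the identity $\on{im}D=I_G$ actually needs.
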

\begin{proof}
Suppose $G = \langle \sigma \rangle$. The following sequence is exact:
$$0 \rightarrow \ZZ \xrightarrow{\mu} \ZZ[G] \xrightarrow{\sigma - 1} \ZZ[G] \xrightarrow{\epsilon} \ZZ \ra 0$$
and as $\ZZ$ and $I_G = \ker(\ZZ[G] \ra \ZZ)$ are free abelian groups, their first Tor groups vanish and thus by applying $\otimes M$, we get an exact sequence
$$0 \ra M \ra \ZZ[G] \otimes M \ra \ZZ[G] \otimes M \ra M \ra 0$$
The middle two modules have vanishing cohomology and thus we can apply dimension-shift twice to obtain
$$H_T^r(G,M) \cong H_T^{r+2}(G,M)$$
\end{proof}

\begin{definition}
The Herbrand quotient of a module $M$ over finite cyclic group $G$ is defined as
\begin{align*}
h(M) := \frac{|H_T^0(G,M)|}{|H_T^1(G,M)|} (= \frac{H_T^{2n}(G,M)}{H_T^{2n+1}(G,M))})
\end{align*}
whenever the cardinalities considered are finite. 
\end{definition}

Herbrand quotient can be thought as an analogue of the Euler characteristic.

\begin{proposition}
Given exact sequence $0 \rightarrow M' \rightarrow M \rightarrow M'' \rightarrow 0$, if any of the two Herbrand quotients are defined, then so is the third and the following equality holds:
$$h(M) = h(M') h(M'') $$
\end{proposition}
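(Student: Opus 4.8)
The plan is to feed the short exact sequence into the Tate cohomology long exact sequence, collapse it into a six-term exact cycle using $2$-periodicity, and then count orders.

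First, $0 \to M' \to M \to M'' \to 0$ produces, as constructed above, a long exact sequence of Tate cohomology groups, exact in every degree $r \in \ZZ$. Since $G$ is cyclic, the preceding proposition gives natural isomorphisms $H_T^r(G,-) \cong H_T^{r+2}(G,-)$ — naturality because this identification is a two-fold dimension shift and dimension-shift isomorphisms are built from natural connecting homomorphisms. Splicing these identifications into the long exact sequence folds it into an exact hexagon
\[
\begin{tikzcd}[column sep=scriptsize]
H_T^0(G,M') \ar[r] & H_T^0(G,M) \ar[r] & H_T^0(G,M'') \ar[d] \\
H_T^1(G,M'') \ar[u] & H_T^1(G,M) \ar[l] & H_T^1(G,M') \ar[l]
\end{tikzcd}
\]
whose six arrows, read cyclically, form an exact sequence $A_1 \to A_2 \to \cdots \to A_6 \to A_1$ with $(A_1,\dots,A_6) = (H_T^0(G,M'), H_T^0(G,M), H_T^0(G,M''), H_T^1(G,M'), H_T^1(G,M), H_T^1(G,M''))$.

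Next I would settle the claim that if two of $h(M'), h(M), h(M'')$ are defined then so is the third, where ``defined'' means both the degree-$0$ and degree-$1$ entry are finite. The key observation is that exactness of the hexagon gives, for each vertex $V$ with incoming arrow $\alpha$ and outgoing arrow $\beta$, a short exact sequence $0 \to \on{im}(\alpha) \to V \to \on{im}(\beta) \to 0$, exhibiting $V$ as an extension of a subgroup of one neighbour by a quotient of the other; hence $V$ is finite once both its neighbours are. Applying this to the two vertices that are left unconstrained under each hypothesis — and using that $M', M, M''$ enter the hexagon symmetrically — shows the remaining two cohomology groups are finite, so the third Herbrand quotient is defined.

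Finally, with all six groups finite, I invoke the elementary counting lemma: for an exact hexagon $A_1 \to \cdots \to A_6 \to A_1$ of finite abelian groups, $|A_1|\,|A_3|\,|A_5| = |A_2|\,|A_4|\,|A_6|$. This follows by setting $C_i = \on{im}(A_i \to A_{i+1})$, using the short exact sequences $0 \to C_{i-1} \to A_i \to C_i \to 0$ to get $|A_i| = |C_{i-1}|\,|C_i|$, and observing that each $|C_j|$ cancels in the alternating product. Substituting the vertices listed above and rearranging yields exactly $\frac{|H_T^0(G,M)|}{|H_T^1(G,M)|} = \frac{|H_T^0(G,M')|}{|H_T^1(G,M')|}\cdot\frac{|H_T^0(G,M'')|}{|H_T^1(G,M'')|}$, i.e. $h(M) = h(M')\,h(M'')$. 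The only step that needs genuine care is the finiteness bookkeeping in the middle; folding the long exact sequence and the counting lemma itself are routine.
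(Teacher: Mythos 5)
Your proposal is correct and follows essentially the same route as the paper: both feed the short exact sequence into the Tate cohomology long exact sequence, use $2$-periodicity for the cyclic group, and conclude by counting orders in a finite exact sequence. The only cosmetic difference is that you fold the sequence into an exact hexagon (which requires the naturality of the periodicity isomorphism you rightly note), whereas the paper truncates the long exact sequence into an eight-term sequence whose two end terms are isomorphic cokernels.
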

\begin{proof}
Truncate the cohomology long exact sequence as follows:
\begin{align*}
0 \ra K \ra H_T^0(M') \ra H_T^0(M) \ra H_T^0 (M'') \ra H_T^1(M') \ra H_T^1(M) \ra H_T^1(M'') \ra K' \ra 0
\end{align*}
where $K = \on{coker}(H_T^{-1} M \ra H_T^{-1}(M''))$ and $K' = \on{coker}(H_T^1(M) \ra H_T^1(M''))$ (the two are isomorphic). 

This immediately tells us that when two Herbrand quotients are defined (i.e. when cohomologies are finite) then the third is also defined, because cohomologies are sandwiched between each other. The identity $h(M) = h(M') h(M'')$ follows from the following computation:
\begin{align*}
& 0 \ra A_0 \ra A_1 \ra \cdots \ra A_r \ra 0 \text{ exact} \\
\implies & \frac{|A_0| \cdot |A_2| \cdots}{|A_1| \cdot |A_3| \cdots} = 1
\end{align*}
(when $A_j$ are finite) which follows by splitting the exact sequence into short exact sequences.
\end{proof}

\begin{proposition}
If $M$ is a finite $G$-module, then $h(M)=1$. 
\end{proposition}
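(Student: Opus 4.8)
The plan is to deduce this from the elementary fact that any endomorphism $\phi$ of a finite abelian group $M$ satisfies $|M| = |\ker\phi|\cdot|\on{im}\phi|$, which is just the first isomorphism theorem $M/\ker\phi \cong \on{im}\phi$ applied to a finite group.

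Write $G = \langle\sigma\rangle$. Since $G$ is cyclic, the augmentation ideal $I_G$ is generated by $\sigma - 1$, so $I_G M = (\sigma-1)M$. Accordingly I would set $D := \sigma - 1$ and $N := \Nm_G$ and regard both as endomorphisms of $M$; one checks immediately that $\on{im} N \subseteq M^G = \ker D$ and $\on{im} D \subseteq \ker N$, so that, by the very definition of Tate cohomology,
\[ H_T^{-1}(G,M) = \frac{\ker N}{\on{im} D}, \qquad H_T^0(G,M) = \frac{\ker D}{\on{im} N}. \]

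Next I would invoke the $2$-periodicity of Tate cohomology over a cyclic group to identify $H_T^1(G,M) \cong H_T^{-1}(G,M)$. Since $M$ is finite, all the groups in sight are finite, so the Herbrand quotient is certainly defined, and
\[ h(M) = \frac{|H_T^0(G,M)|}{|H_T^1(G,M)|} = \frac{|H_T^0(G,M)|}{|H_T^{-1}(G,M)|} = \frac{|\ker D|\,|\on{im} D|}{|\on{im} N|\,|\ker N|}. \]
Applying the counting identity to $\phi = D$ and to $\phi = N$ gives $|\ker D|\,|\on{im} D| = |M| = |\ker N|\,|\on{im} N|$, whence $h(M) = 1$.

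This argument is essentially pure bookkeeping, so there is no serious obstacle to anticipate; the only point requiring a moment's care is the identification $I_G M = \on{im}(\sigma - 1)$, which genuinely uses cyclicity of $G$ (for a general finite group one would pass to a cyclic subgroup, but the Herbrand quotient — hence this proposition — is defined only for cyclic $G$ in the first place), together with the remark that finiteness of $M$ is exactly what legitimizes the cardinality arithmetic.
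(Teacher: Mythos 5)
Your proof is correct and is in substance the same as the paper's: the paper encodes the identity $|\ker|\cdot|\on{im}| = |M|$ for $\sigma-1$ and the norm map via the two four-term exact sequences $0 \ra M^G \ra M \xrightarrow{\sigma-1} M \ra M_G \ra 0$ and $0 \ra H_T^{-1}(G,M) \ra M_G \xrightarrow{\widehat{\Nm}_G} M^G \ra H_T^0(G,M) \ra 0$ together with the alternating-cardinality lemma, which is exactly your bookkeeping with $D$ and $N$. Both arguments rest on the same points you flag: cyclicity giving $I_GM=(\sigma-1)M$, $2$-periodicity to replace $H_T^1$ by $H_T^{-1}$, and finiteness of $M$ to count.
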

\begin{proof}
Write $G = \langle \sigma \rangle$. Then
\begin{align*}
& 0 \ra M^G \ra M \xrightarrow{\sigma - 1} M \ra M_G \ra 0 \\
& 0 \ra H_T^{-1}(G, M) \ra M_G \xrightarrow{\on{Nm}_G} M^G \ra H_T^0(G,M) \ra 0
\end{align*}
Now apply the $\frac{|A_0| \cdot |A_2| \cdots}{|A_1| \cdot |A_3| \cdots} = 1$ to each exact sequence to obtain $|H_T^{-1}(G,M)| = |H_T^0(G,M)|$.
\end{proof}

\begin{corollary}
Let $\alpha$ be a $G$-module map with finite kernel and cokernel. If either $h(M)$ or $h(N)$ is defined, then so is another, and $h(M) = h(N)$.
\end{corollary}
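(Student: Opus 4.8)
The plan is to factor $\alpha$ through its image and reduce everything to the two preceding propositions (multiplicativity of the Herbrand quotient in short exact sequences, and $h(\text{finite})=1$). Writing $\alpha \colon M \ra N$, set $K := \ker \alpha$ and $C := \on{coker} \alpha$ — both finite $G$-modules by hypothesis — and $I := \on{im} \alpha \subseteq N$. These fit into two short exact sequences of $G$-modules
\[
0 \ra K \ra M \ra I \ra 0 \qquad \text{and} \qquad 0 \ra I \ra N \ra C \ra 0 .
\]

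By the proposition that a finite $G$-module has Herbrand quotient $1$, both $h(K)$ and $h(C)$ are defined (and equal $1$). Now apply multiplicativity to the first sequence: since $h(K)$ is defined unconditionally, the hypothesis that \emph{one} of $h(M)$, $h(I)$ is defined forces the other to be defined as well, and then $h(M) = h(K)\,h(I) = h(I)$. Applying multiplicativity to the second sequence in exactly the same way, if one of $h(I)$, $h(N)$ is defined then so is the other, and $h(N) = h(I)\,h(C) = h(I)$. Chaining these two equivalences: if $h(M)$ is defined then $h(I)$ is defined, hence $h(N)$ is defined, and $h(M) = h(I) = h(N)$; by the symmetric argument the same conclusion follows starting from the assumption that $h(N)$ is defined. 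This is precisely the assertion of the corollary.

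I do not expect any real obstacle. The only point that needs a moment's care is the bookkeeping around the multiplicativity statement, which requires \emph{two} of the three Herbrand quotients in a short exact sequence to be defined: one has to observe that the finiteness of $K$ (respectively $C$) furnishes one of those two for free, so that a single hypothesis on $h(M)$ or $h(N)$ suffices to propagate through both sequences. Beyond that, the proof is just the standard image-factorisation of a module map.
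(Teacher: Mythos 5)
Your proposal is correct and matches the paper's proof: the paper also factors $\alpha$ through its image and applies multiplicativity of $h$ together with $h(\text{finite})=1$ to the two short exact sequences $0 \ra \ker(\alpha) \ra M \ra \alpha(M) \ra 0$ and $0 \ra \alpha(M) \ra N \ra \on{coker}(\alpha) \ra 0$. Your extra remark on the bookkeeping (that finiteness of the kernel and cokernel supplies one of the two required defined quotients in each sequence) is exactly the point the paper leaves implicit.
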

\begin{proof}
Apply previous two propositions to the following exact sequences:
\begin{align*}
& 0 \ra \alpha(M) \ra N \ra \on{coker}(\alpha) \ra 0 \\
& 0 \ra \on{ker}(\alpha) \ra M \ra \alpha(M) \ra 0
\end{align*}
\end{proof}

\pagebreak
\section{Local Reciprocity Law}

Denote $G = \Gal(L/K)$ where $L/K$ is a finite Galois extension. We also denote 
$$H^r(L/K):=H^r(\Gal(L/K),L^\times)$$

In this section, we state and prove the local reciprocity law.

\subsection{Statement}

Local reciprocity law is an isomorphism between the Galois group of an abelian extension $L/K$ of local fields and $K^\times$ modulo norm subgroup $\Nm_{L/K}(L^\times)$. It allows a characterization of extensions of $K$ using information internal to $K$.

\begin{theorem}[Local Reciprocity Law]
For a local field $K$, there is a homomorphism
$$\phi_K: K^\times \rightarrow \Gal(K^{\on{ab}}/K)$$
such that
\begin{enumerate}
\item For uniformizer $\pi \in K^\times$ and a finite unramified extension $L/K$, $\phi_K(\pi)$ acts on $L$ as $\on{Frob}_{L/K}$.
\item For a finite abelian extension $L/K$, by composing $\phi$ with restriction to $\Gal(L/K)$ we get a surjection whose kernel is $\Nm_{L/K}(L^\times)$ an induced isomorphism
$$\phi_{L/K} : \frac{K^\times}{\Nm_{L/K}(L^\times)} \cong \Gal(L/K)$$
\end{enumerate}
\end{theorem}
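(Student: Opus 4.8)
The standard route to the Local Reciprocity Law in this cohomological framework is through the theory of \emph{class formations}. The plan is to establish first that the pair $(\Gal(K^{\on{sep}}/K), K^{\on{sep},\times})$ (or rather the system of groups $L^\times$ for finite $L/K$) forms a class formation: this amounts to showing that for every finite Galois $L/K$ one has $H^1(\Gal(L/K), L^\times) = 0$ (this is Hilbert 90, already available in the excerpt) and that $H^2(\Gal(L/K), L^\times)$ is cyclic of order $[L:K]$, equipped with a canonical ``invariant'' isomorphism $\on{inv}: H^2(L/K) \hookrightarrow \QQ/\ZZ$ compatible with inflation and multiplied by $[L:K]$ under restriction. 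The cyclicity and order computation is the genuinely hard local-field input: one reduces via the inflation-restriction sequence and the solvability of $\Gal(L/K)$ (Proposition~\ref{local solvable}) to the cyclic unramified case, where one computes $H^2$ of the unramified extension directly using the Herbrand quotient machinery and the valuation exact sequence $0 \to U_L \to L^\times \xrightarrow{v} \ZZ \to 0$, together with the fact that $H_T^r(G, U_L)$ vanishes for unramified $G$ (this uses that $U_L$ has a filtration with quotients $\ell$ and $\ell^\times$, whose cohomology vanishes by the Normal Basis Theorem and Hilbert 90 applied to the residue fields). The main obstacle, then, is precisely this computation that $H^2(L/K)$ is cyclic of order $[L:K]$ with the Frobenius giving a canonical generator in the unramified case.

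**Constructing the reciprocity map.** Once the class formation axioms are in place, the Tate--Nakayama theorem (or its hands-on ancestor for class formations) supplies, for each finite Galois $L/K$, a canonical \emph{fundamental class} $u_{L/K} \in H^2(\Gal(L/K), L^\times)$ characterized by $\on{inv}(u_{L/K}) = 1/[L:K]$. Cup product with $u_{L/K}$ then induces an isomorphism
$$H_T^{-2}(\Gal(L/K), \ZZ) \xrightarrow{\ \smile\, u_{L/K}\ } H_T^0(\Gal(L/K), L^\times),$$
and since the left side is $\Gal(L/K)^{\on{ab}}$ and the right side is $K^\times/\Nm_{L/K}(L^\times)$, inverting this map (and restricting to abelian $L/K$, where $\Gal(L/K)^{\on{ab}} = \Gal(L/K)$) yields the isomorphism $\phi_{L/K}$ of part~(2). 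Compatibility of the fundamental classes under inflation lets these $\phi_{L/K}$ be assembled into a single homomorphism $\phi_K: K^\times \to \Gal(K^{\on{ab}}/K)$ by passing to the inverse limit over finite abelian $L/K$ (using Proposition~\ref{profinite limit}), and surjectivity/kernel in part~(2) is then immediate from the isomorphism statement.

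**The Frobenius normalization.** It remains to verify part~(1): that a uniformizer $\pi$ maps to $\on{Frob}_{L/K}$ for unramified $L/K$. The plan here is to trace through the cup-product definition of $\phi_{L/K}$ in the unramified case, where $\Gal(L/K)$ is cyclic generated by $\on{Frob}_{L/K}$ and all Tate cohomology is $2$-periodic; one identifies the fundamental class $u_{L/K}$ explicitly via the invariant map (which on the unramified side is built from the valuation $v: L^\times \to \ZZ$ and the canonical generator of $H^2(\Gal(L/K),\ZZ) \cong \Hom(\Gal(L/K), \QQ/\ZZ)$ sending $\on{Frob}$ to $1/[L:K]$, cf.\ Proposition~\ref{integer cohomology}), and checks that the resulting periodicity isomorphism sends the class of $\pi$ in $K^\times/\Nm_{L/K}(L^\times)$ to $\on{Frob}_{L/K}$. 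This is a bookkeeping computation with the cup product and the periodicity isomorphism for cyclic groups; the only subtlety is keeping signs and the direction of the invariant map straight, and confirming that the two normalizations (the $1/[L:K]$ convention and the Frobenius convention) are the compatible ones. Finally one must check consistency of these normalizations as $L$ grows, which follows from the inflation-compatibility of invariants already built into the class formation axioms.
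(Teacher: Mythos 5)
Your overall architecture is the same as the paper's: verify Hilbert 90 and that $H^2(\Gal(L/K),L^\times)$ is cyclic of order $[L:K]$ with an invariant map compatible with inflation and restriction, obtain $\Gal(L/K)^{\on{ab}}\cong K^\times/\Nm_{L/K}(L^\times)$ from the Tate--Nakayama isomorphism (the paper constructs it by a double dimension shift through the splitting module and only afterwards identifies it with cup product against the fundamental class, which is the form you quote), assemble $\phi_K$ from the compatibility of the $\phi_{L/K}$, and pin down the normalization by the explicit unramified computation sending $\on{Frob}_{L/K}$ to a uniformizer. So the skeleton is sound.

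However, there is a genuine gap at the step you yourself flag as the hard input. You claim that inflation--restriction plus solvability of $\Gal(L/K)$ (Proposition~\ref{local solvable}) reduces the computation of $H^2(L/K)$ ``to the cyclic unramified case,'' where the valuation sequence and the vanishing of $H_T^r(G,U_L)$ finish the job. That reduction does not exist: the cyclic subextensions produced by solvability are in general ramified (e.g.\ $\QQ_p(\zeta_p)/\QQ_p$), and for ramified extensions $H_T^r(G,U_L)$ does \emph{not} vanish, so the filtration argument with quotients $\ell^\times$ and $\ell^+$ is unavailable. What is actually needed, and what the paper supplies, is two separate inputs: (i) for an \emph{arbitrary} cyclic extension, the Herbrand quotient computation $h(U_L)=1$ and hence $h(L^\times)=[L:K]$, which requires constructing a cohomologically trivial open subgroup $V=\exp(\pi^M V_0)$ of $U_L$ from a normal basis (Lemma~\ref{cyclic brauer}); combined with Hilbert 90 and periodicity this gives the upper bound $|H^2(L/K)|\le n$ by induction along a solvable tower; and (ii) the unramified theory, used not to compute $H^2(L/K)$ directly but to exhibit a canonical copy of $\frac1n\ZZ/\ZZ$ inside $H^2(L/K)$, via the diagram comparing $H^2(K^{\on{ur}}/K)$, $H^2(L^{\on{ur}}/L)$ and $H^2(\bar K/K)$, which in turn needs the fact that $\on{inv}$ turns restriction into multiplication by $n$ (Proposition~\ref{unramified qz}). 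Without (i) your upper bound fails, and without (ii) you have no canonical generator or invariant map on $H^2(L/K)$ for ramified $L/K$, so the class formation axioms you want to invoke are not established by the argument as you describe it.
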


We prove this theorem by establishing $\frac{K^\times}{\Nm_{L/K}(L^\times)} \cong \Gal(L/K)^{\on{ab}}$ first, and then using a compatibility condition to get the map $\phi_K$. The relation $\frac{K^\times}{\Nm_{L/K}(L^\times)} \cong \Gal(L/K)^{\on{ab}}$ is in turn proven as a cohomological result:
$$\Gal(L/K)^{\on{ab}} \cong H_T^{-2}(G,\ZZ) \cong H_T^0(G,L^\times) = \frac{K^\times}{\Nm_{L/K}(L^\times)}$$
using dimension shifts. We also prove that $\phi_{L/K}^{-1}$ is given by:
$$\phi_{L/K}^{-1} : [\sigma] \mapsto \left[ \prod_{\tau \in \Gal(L/K)} \varphi(\tau,\sigma) \right]$$
where $[\varphi]$ is a generator of $H^2(L/K)$. This map is a boundary map induced by a certain short exact sequence of $G$-modules. 

We briefly summarize the proof. This summary can be read before and after studying the proof itself to fortify one's understanding.
\begin{enumerate}
    \item We first prove Tate's theorem, which is an an abstract veresion of the local reciprocity law. Tate's theorem relates $r$-th cohomology group and $(r+2)$-th cohomology group, and this is proven via two dimension shifts. The dimension shifts are done using the modules $\ZZ[G]$ and $L^\times (\varphi)$, whose cohomology groups vanish. But the vanishing is shown to hold subject to the condition that $H^2(L/K)\cong \frac1{[L:K]}\ZZ / \ZZ$, which is the task to establish in next two steps.
    \item We show that $H^2(L/K) \cong \frac1{[L:K]}\ZZ/\ZZ$ holds for a finite unramified extension $L/K$ by changing the coefficient module from $L^\times$ to $\ZZ$, and then to $\QQ/\ZZ$ using exact sequences
    \begin{align*}
        & 0 \ra U_L \ra L^\times \xrightarrow{\on{ord}_L} \ZZ \ra 0\\
        & 0 \ra \ZZ \ra \QQ \ra \QQ/\ZZ \ra 0
    \end{align*}
    Thus we demand that $H_T^r(G,U_L)=0$, and the nontrivial task is to show this for $r=0$, i.e. that $\Nm_{L/K}$ is surjective. Construction of preimage in surjectivity proof is done by a limit process, using quotient groups of the filtration $U_L \supset U_L^{(2)} \supset U_L^{(3)} \supset \cdots$ which are $\ell^+$ or $\ell^\times$. Norm maps $\ell^\times \rightarrow k^\times$ and $\ell^+ \rightarrow k^+$ are shown to be surjective and by limit process we obtain the desired surjectivity relation.
    \item We show that $H^2(L/K) \cong \frac1{[L:K]}\ZZ/\ZZ$ holds for a general finite Galois extension by embedding into $H^2(\bar K / K)$. More precisely, we consider the following diagram:
    \[\begin{tikzcd}
        0 \ar[r] & \frac1{[L:K]}\ZZ/\ZZ \ar[r] \ar[d] & H^2(K^{\on{ur}}/K) \ar[r,"\on{Res}"] \ar[d,"\on{Inf}"] & H^2(L^{\on{ur}} / L) \ar[d,"\on{Inf}"] \\
        0 \ar[r] & H^2(L/K) \ar[r] & H^2(\bar K / K) \ar[r,"\on{Res}"] & H^2(\bar L / L)
    \end{tikzcd}\]
    Groups of the map $H^2(K^{\on{ur}}/K) \rightarrow H^2(L^{\on{ur}}/L)$ embed into groups of the map $H^2(\bar K / K) \xrightarrow{\on{Inf}} H^2(\bar L/L)$, whose kernel is isomorphic to $\frac1{[L:K]}\ZZ/\ZZ$, which embeds to $H^2(L/K)$. Thus it suffices to show $|H^2(L/K)|\le n$, which we prove by induction and solvability; we find a cyclic subextension $K'/K$ and then
    $$0 \ra H^2(K'/K) \xrightarrow{\on{Inf}} H^2(L/K) \xrightarrow{\on{Res}} H^2(L/K')$$
    and we get $|H^2(L/K)|\le |H^2(K'/K)| \cdot |H^2(L/K')| \le n$.
\end{enumerate}

\subsection{Proof Part I: Tate's Theorem}

Tate's theorem is a generalized cohomological version of the local reciprocity law. In this section we prove Tate's theorem, and in later sections we prove a condition that allows us to apply Tate's theorem to our situation.

\begin{theorem}[Tate]\label{tate}
For a finite group $G$ and a $G$-module $C$, suppose that $\forall H \le G$, $H^1(H,C)=0$ and $H^2(H,C) \cong \ZZ/|H|$. Then
$$H_T^r(G,\ZZ) \cong H_T^{r+2}(G, C)$$
\end{theorem}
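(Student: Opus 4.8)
The plan is to produce a cohomology class in $H^2(G,C)$ that "looks like" a generator and to use cup product with it to build the isomorphism, following the standard pattern: first a dimension-shifting lemma (Tate–Nakayama), then a reduction to $r=0$ via two dimension shifts. More concretely, I would first observe that by the hypothesis applied to $H=G$ we have $H^2(G,C)\cong\ZZ/|G|$; pick a generator $u\in H^2(G,C)$. The compatibility of restriction with the cyclic structure gives that $\on{Res}^G_H u$ generates $H^2(H,C)\cong\ZZ/|H|$ for every subgroup $H\le G$ (since $\on{Cor}\circ\on{Res}=[G:H]$ pins down the order). The heart of the argument is then the \emph{Tate–Nakayama lemma}: if $u\in H^2(G,C)$ restricts to a generator of $H^2(H,C)\cong\ZZ/|H|$ for all $H\le G$, and $H^1(H,C)=0$ for all $H$, then cup product $x\mapsto x\smile u$ induces an isomorphism $H_T^r(G,\ZZ)\xrightarrow{\sim}H_T^{r+2}(G,C)$ for all $r\in\ZZ$ (using $\ZZ\otimes C\cong C$).

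To prove the Tate–Nakayama lemma I would use dimension shifting to reduce to a single degree. The key is a uniqueness/rigidity statement: a family of homomorphisms $H_T^r(G,-)\to H_T^{r+2}(G,-\otimes C)$, commuting with the connecting maps of short exact sequences, is determined by its value in any one degree; this is the abstract principle behind derived-functor uniqueness, applied to both of the exact sequences $0\to I_G\otimes M\to\ZZ[G]\otimes M\to M\to 0$ and $0\to M\to\ZZ[G]\otimes M\to J_G\otimes M\to 0$, whose middle terms are cohomologically trivial (the Corollary after Shapiro's Lemma). So it suffices to check that $\smile u$ is an isomorphism in \emph{one} degree — I would take $r=-1$ or $r=0$. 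But here the hypotheses are only about $G$-subgroups, so to run the induction I must know the analogous statement for all $H\le G$ simultaneously; this is exactly why the restriction-to-generator hypothesis is phrased for all $H$. The induction is: prove $\smile(\on{Res}^G_H u)\colon H_T^{-1}(H,\ZZ)\to H_T^1(H,C)$ is an isomorphism for all $H$ (the base case), then dimension-shift up and down, using that at each stage the new coefficient module $J_G\otimes M$ or $I_G\otimes M$ still satisfies $H^1(H,-)=0$ after the shift because the shift is an isomorphism of $\delta$-functors.

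For the base case I would unwind what $\smile u$ does in degree $-1$: $H_T^{-1}(G,\ZZ)=0$ since $H_T^{-1}(G,\ZZ)=\ker(\Nm_G)/I_G\ZZ$ and $\Nm_G$ acts as multiplication by $|G|$ on $\ZZ$, which is injective, so the source is $0$; and $H_T^1(G,C)=H^1(G,C)=0$ by hypothesis — so the base case in degree $-1$ is the trivial statement $0\cong 0$, and similarly for all $H$. That forces me to anchor the induction at $r=-2$ or $r=0$ instead. In degree $r=-2$: $H_T^{-2}(G,\ZZ)=H_1(G,\ZZ)\cong G^{\on{ab}}$ and $H_T^0(G,C)=C^G/\Nm_G C$, and I must check $\smile u$ is an isomorphism there; alternatively, at $r=0$, $H_T^0(G,\ZZ)=\ZZ/|G|$ and $H_T^2(G,C)\cong\ZZ/|G|$, and I need $\smile u$ to send $1$ to $u$, i.e. to a generator — which holds \emph{by construction}, and the compatibility of cup products with restriction then upgrades this to all $H$. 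I expect the main obstacle to be the bookkeeping in the double dimension shift: verifying that the cup-product maps genuinely commute (up to the sign $(-1)^r$ recorded in the cup-product proposition) with the connecting homomorphisms of \emph{both} exact sequences, and that cohomological triviality of $\ZZ[G]\otimes M$ survives tensoring and restriction, so that the five-lemma can be applied at each inductive step to both subgroups $H$ and the whole group $G$ in tandem.
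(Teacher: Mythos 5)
Your overall strategy (cup product with a distinguished class $u\in H^2(G,C)$, i.e.\ Tate--Nakayama) can be made to work, but the reduction you rely on has a genuine gap. You claim that because $x\mapsto x\smile u$ commutes with connecting homomorphisms, it suffices to verify it is an isomorphism in a single degree, anchored at $r=0$ where $1\mapsto u$. The problem is that dimension shifting does not stay inside the pair of coefficient modules $(\ZZ, C)$: the sequences $0\to I_G\otimes M\to \ZZ[G]\otimes M\to M\to 0$ and $0\to M\to \ZZ[G]\otimes M\to J_G\otimes M\to 0$ shift the degree \emph{and} change the coefficients. The compatibility $(\delta a)\smile u=\delta(a\smile u)$ gives a commuting square relating $\smile u$ on $H_T^r(G,\ZZ)$ to $\smile u$ on $H_T^{r+1}(G,I_G)$ (with target $H_T^{r+3}(G,I_G\otimes C)$), not to $\smile u$ on $H_T^{r+1}(G,\ZZ)$. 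So the isomorphism at $r=0$ with $\ZZ$-coefficients (which is indeed easy: $[1]\smile u=u$, and your $\on{Cor}\circ\on{Res}=[G:H]$ argument correctly shows $\on{Res}^G_H u$ generates $H^2(H,C)$) does not propagate to other degrees with the same coefficients. To close the induction you would have to prove the stronger statement that $\smile u\colon H_T^r(G,A)\to H_T^{r+2}(G,A\otimes C)$ is an isomorphism for a class of coefficient modules $A$ closed under tensoring with $I_G$ and $J_G$ (e.g.\ all $\ZZ$-free $A$); but then the one-degree base case for arbitrary such $A$ is no longer the trivial computation $1\mapsto u$ --- it is essentially the whole theorem, and the "derived-functor uniqueness" principle you invoke only pins down the map, not its bijectivity.

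The paper avoids this circle entirely: it takes a cocycle $\varphi$ representing the generator and builds the splitting module $C(\varphi)=C\oplus\bigoplus_{\sigma\neq 1}\ZZ\,x_\sigma$ with $\sigma\cdot x_\tau=x_{\sigma\tau}-x_\sigma+\varphi(\sigma,\tau)$, fitting into $0\to C\to C(\varphi)\to I_G\to 0$. The hypotheses give $H^1(H,C(\varphi))=H^2(H,C(\varphi))=0$ for every $H\le G$ (the point being that $\varphi$ becomes the coboundary of $\sigma\mapsto x_\sigma$ in $C(\varphi)$, and $H^1(H,I_G)\cong H_T^0(H,\ZZ)\cong\ZZ/|H|$ maps onto $H^2(H,C)$), and then a cohomological-triviality criterion --- if $H^1(H,M)=H^2(H,M)=0$ for all $H\le G$ then $H_T^r(G,M)=0$ for all $r$, proved first for solvable groups by induction through a cyclic quotient and then in general by the Sylow argument with $\on{Cor}\circ\on{Res}=[G:G_p]$ --- kills all Tate cohomology of $C(\varphi)$. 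Two dimension shifts through $0\to I_G\to\ZZ[G]\to\ZZ\to 0$ and $0\to C\to C(\varphi)\to I_G\to 0$ then give $H_T^r(G,\ZZ)\cong H_T^{r+1}(G,I_G)\cong H_T^{r+2}(G,C)$. This triviality criterion is the engine that is missing from your proposal; the five-lemma bookkeeping you anticipate does not substitute for it. (The cup-product description of the resulting isomorphism, which motivates your approach, is derived in the paper only afterwards, once the isomorphism already exists.)
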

\begin{proof}
The key idea is to use dimension shifts twice, and to do that we need to construct an appropriate module with vanishing cohomology. Letting $[\varphi]$ be a generator of $H^2(G,C) \cong \ZZ/|G|$, define the \emph{splitting module}
\begin{align*}
& C(\varphi) = C \oplus \bigoplus_{\sigma \in G, \sigma \neq 1} \ZZ \cdot x_\sigma \\
& \sigma \cdot x_\tau := x_{\sigma\tau} - x_\sigma + \varphi(\sigma,\tau)
\end{align*}
(we use formal symbol $x_1=\varphi(1,1)=1$ here) Let's verify that this is a $G$-module action:
\begin{align*}
(\rho \sigma) \cdot x_\tau &= x_{\rho \sigma \tau} - x_{\rho \sigma} + \varphi(\rho\sigma, \tau) \\
\rho \cdot (\sigma \cdot x_\tau) &= \rho(x_{\sigma \tau} - x_\sigma + \varphi(\sigma, \tau)) \\
&= x_{\rho \sigma \tau} - x_\rho + \varphi(\rho, \sigma \tau) - (x_{\rho \sigma} - x_\rho + \varphi(\rho, \sigma)) + \rho \varphi(\sigma, \tau)
\end{align*}
and these are equal by the 2-cocycle condition:
$$\rho \varphi(\sigma, \tau) + \varphi(\rho, \sigma \tau) = \varphi(\rho \sigma, \tau) +\varphi(\rho, \sigma)$$

We also define a map $\alpha: C(\varphi) \rightarrow I_G$ given by $\alpha(c) = 0$ for each $c \in C$ and $\alpha(x_\sigma) = \sigma-1$. Kernel of the map is $C$ and thus we get an exact sequence
$$0 \ra C \xrightarrow{\subset} C(\varphi) \xrightarrow{\alpha} I_G \ra 0$$
We will show that $\forall r, H_T^r(G,C(\varphi))=0$ and therefore we get two exact sequences
\begin{align*}
& 0 \ra I_G \ra \ZZ[G] \ra \ZZ \ra 0 \\
& 0 \ra C \ra C(\varphi) \ra I_G \ra 0
\end{align*}
whose associated long exact sequence of Tate cohomology allows dimension shifting twice:
$$H_T^r(G,\ZZ) \cong H_T^{r+1}(G,I_G) \cong H_T^{r+2}(G,C)$$
which is our desired isomorphism. 

Before we show $\forall r, H_T^r(G,C(\varphi))=0$, we first show that $H^1(H,C(\varphi)) = H^2(H,C(\varphi)) = 0$. Consider 
$$0 \ra C \ra C(\varphi) \ra I_G \ra 0$$
and a part of its associated long exact sequence
\begin{align*}
0 \ra H^1(H,C(\varphi)) \ra H^1(H, I_G) \ra H^2(H,C) \xrightarrow 0 H^2(H,C(\varphi)) \ra 0
\end{align*}
Here the leftmost term $H^1(H,C)=0$ by hypothesis and the rightmost term $H^2(H,I_G)\cong H^1(H,\ZZ)=0$ by dimension shifting and Proposition~\ref{integer cohomology}. The map $H^2(H,C) \rightarrow H^2(H,C(\varphi))$ induced by inclusion $C \rightarrow C(\varphi)$ is a zero map because the generator $[\varphi]$ maps to a coboundary; $\d (\sigma \mapsto x_\sigma) = \varphi$. Due to exactness we see that $H^2(H,C(\varphi))=0$. Since $H^1(H,I_G) \cong H^0(H,\ZZ) \cong \ZZ/|H|$ by dimension shifting and Proposition~\ref{integer cohomology}, $H^2(H,C) \cong \ZZ/|H|$ too, and $H^1(H,I_G) \rightarrow H^2(H,C)$ is a surjection, it is a bijection and therefore the kernel $H^1(H,C(\varphi))$ is zero too.

Now we obtain $H_T^r(G,M)=0$ by the following general result, and Tate's theorem is proven.

\begin{proposition}
For a finite group $G$ and a $G$-module $M$, if $\forall H \le G, H^1(H,M)=H^2(H,M)=0$ then
$$\forall r, H_T^r(G,M)=0$$
\end{proposition}
\begin{proof}

\textbf{Case for solvable $G$:}

If $G$ is solvable, then there is a proper subgroup $H$ such that $G/H$ is cyclic. Assume induction hypothesis on order of $G$. Then $\forall r, H_T^r(H,M)=H_T^r(G/H,M^H)=0$. Consider the inflation-restriction sequence for $r\ge 1$:
\begin{align*}
& 0 \ra H_T^r(G/H, M^H) \xrightarrow{\on{Inf}} H_T^r(G,M) \xrightarrow{\on{Res}} H_T^r(H,M) = 0 \\
\implies & H_T^r(G/H, M^H) \cong H_T^r(G,M)
\end{align*}
By assumption, $H_T^1(G,M) = 0 = H_T^2(G,M)$ and thus
\begin{align*}
H_T^1(G/H,M^H) \cong H_T^1(G,M) = 0 = H_T^2(G,M) \cong H_T^2(G/H, M^H)
\end{align*}
By periodicity of cohomology of cyclic $G/H$,
$$ \forall r \in \ZZ, H_T^r(G/H, M^H)=0$$
and thus
\begin{align*}
H_T^r(G,M) \cong H_T^r(G/H,M^H) = 0 = H_T^2(G/H,M^H) \cong H_T^2(G,M)
\end{align*}
and therefore $r \ge 1 \implies H_T^r(G,M)=0$.

Now we show that $H_T^0(G,M) = M^G / \on{Nm}_G(M) = 0$. Suppose $x \in M^G$. Because $H_T^0(G/H, M^H) = 0$, there is $y \in M^H \subseteq M$ such that $\on{Nm}_{G/H}(y)=x$. Because $H_T^0(H,M)=0$, there exists a $z \in M$ such that $\on{Nm}_H (z) = y$. Thus $\on{Nm}_G(z) = (\on{Nm}_{G/H} \circ \on{Nm}_H)(z) = \on{Nm}_{G/H}(y) = x \implies H_T^0(G,M)=0$.

Tensor the exact sequence $0 \ra I_G \ra \ZZ[G] \ra \ZZ \ra 0$ with $M$ to get exact
\begin{align*}
0 \ra I_G \otimes_\ZZ M \ra \ZZ[G] \otimes_\ZZ M \ra M \ra 0
\end{align*}
Let's apply dimension shifting to this. Recall that $\forall H \le G, r\in \ZZ, H_T^r(H, \ZZ[G] \otimes_\ZZ M) = 0$ because $H_T^r(H, \ZZ[G] \otimes_\ZZ M) = H_T^r(H, (\bigoplus_{|G/H|} \ZZ[H]) \otimes_\ZZ M) = \bigoplus_{|G/H|} H_T^r(H,\ZZ[H] \otimes_\ZZ M) = \bigoplus 0 = 0$.

In particular, $I_G \otimes_\ZZ M$ satisfies the hypothesis of the theorem, and so $\forall r \ge 0, H_T^r(G,I_G \otimes_\ZZ M)=0$. But $H_T^{-1}(G,M) \cong H_T^0(G,I_G \otimes_\ZZ M)=0$ gives $H_T^{-2}(G,M)=0, H_T^{-3}(G,M)=0$, and so on.

\textbf{Case for any $G$:} Sylow $p$-subgroup is solvable and we factor through Cor and Res.

Suppose $G$ is any finite group. Find a Sylow $p$-subgroup $G_p \le G$. Since every finite $p$-group is nilpotent, and every nilpotent group is solvable, the above argument shows that $H_T^r(G_p,M)=0$ for all $r$. Now restriction map $H_T^r(G,M) \ra H_T^r(G_p,M)$ is an injection on the $p$-primary component (Cor$\circ$Res is multiplication by $[G:G_p]$, which is prime to $p$.). As $\text{Cor}\circ\text{Res}$ factors through $H_T^r(G_p,M)=0$, every $p$-primary component of $H_T^r(G,M)$ is zero. Thus $H_T^r(G,M)=0$.
\end{proof}

\end{proof}

\subsection{Proof Part II: $H^2(L/K)$ is cyclic (unramified case)}

We must prove that $H^2(\Gal(L/K), L^\times) \cong \frac1{[L:K]}\ZZ/\ZZ$ in order to apply Tate's theorem to prove local reciprocity. We'll first establish this for finite unramified $L/K$ and then extend the proof to ramified $L/K$.

For a finite unramified extension $L/K$ of local fields, consider the following composition of maps:
\begin{align*}
H^2(L/K) \xrightarrow{\on{ord}_L} H^2(G,\ZZ) \xleftarrow{\delta} H^1(G,\QQ/\ZZ) \cong \Hom(G, \QQ/\ZZ) \cong \frac1{[L:K]}\ZZ/\ZZ
\end{align*}
where we use the exact sequences 
\begin{align*}
& 0 \ra U_L \ra L^\times \xrightarrow{\on{ord}_L} \ZZ \ra 0\\
& 0 \ra \ZZ \ra \QQ \ra \QQ/\ZZ \ra 0
\end{align*}
for the first two maps. We already know that the second map ($\delta$) is an isomorphism, and we will soon show that the first map is an isomorphism. The third map is isomorphism because $G$ acts on $\QQ/\ZZ$ trivially. The fourth map is an isomorphism because $\Hom(G,\QQ/\ZZ)$ is determined by where a generator of $G$ (Frobenius map) maps to in $\QQ/\ZZ$, which has to be a multiple of $\frac1{[L:K]}$ due to torsion. Thus showing that $H^2(L/K)$ is finite cyclic of order $[L:K]$ will be complete by showing that cohomology of $U_L$ vanishes.

\begin{proposition}
For a finite unramified extension $L/K$,
$$H_T^r(G,U_L) = 0$$
In particular, the norm map restricted to unit group $\Nm_{L/K} :U_L \ra U_K$ is surjective.
\end{proposition}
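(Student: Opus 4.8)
The plan is to run the argument along the filtration of $U_L$ by higher unit groups. Since $L/K$ is unramified, $G=\Gal(L/K)$ is cyclic, generated by $\on{Frob}_{L/K}$, and the reduction map identifies $G$ with $\Gal(\ell/k)$; in particular the Tate cohomology of $G$ is $2$-periodic. Set $U_L = U_L^{(0)} \supseteq U_L^{(1)} \supseteq U_L^{(2)} \supseteq \cdots$ with $U_L^{(n)} = 1 + \mf m_L^n$ for $n \ge 1$. Because $L/K$ is unramified, a uniformizer $\pi$ of $K$ is also a uniformizer of $L$ and is fixed by $G$, so multiplication by $\pi^n$ gives a $G$-equivariant isomorphism $\mf m_L^n / \mf m_L^{n+1} \cong \ell$; this yields $G$-module isomorphisms $U_L^{(0)}/U_L^{(1)} \cong \ell^\times$ and $U_L^{(n)}/U_L^{(n+1)} \cong \ell^+$ for $n \ge 1$, with $G$ acting through $\Gal(\ell/k)$.

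First I would compute the Tate cohomology of these graded pieces. For $\ell^\times$: Hilbert's Theorem 90 gives $H_T^1(G,\ell^\times) = H^1(G,\ell^\times) = 0$; the norm map of finite fields $\ell^\times \to k^\times$ is surjective (a cardinality count), so $H_T^0(G,\ell^\times) = k^\times/\Nm(\ell^\times) = 0$; and $2$-periodicity then forces $H_T^r(G,\ell^\times) = 0$ for all $r$. For $\ell^+$: the Normal Basis Theorem gives $\ell \cong k[G]$ as $G$-modules, which is an induced module, so $H_T^r(G,\ell^+) = 0$ for all $r$. Then a d\'evissage along the finite-length filtration: from the short exact sequences $0 \to U_L^{(n-1)}/U_L^{(n)} \to U_L/U_L^{(n)} \to U_L/U_L^{(n-1)} \to 0$ and the Tate cohomology long exact sequence, induction on $n$ gives $H_T^r(G, U_L/U_L^{(n)}) = 0$ for all $r$ and all $n \ge 1$.

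The step I expect to be the main obstacle is passing from the finite quotients to $U_L$ itself. Since $U_L$ is complete for its filtration, $U_L = \varprojlim_n U_L/U_L^{(n)}$ with surjective transition maps, and each $U_L/U_L^{(n)}$ is finite; hence the inverse system of finite complexes $C^\bullet(G, U_L/U_L^{(n)})$ is Mittag-Leffler, $\varprojlim_n$ is exact on it and commutes with $C^\bullet(G,-)$ (since $C^r(G,M)$ is a finite product of copies of $M$) and with the Tate modifications in degrees $0$ and $-1$, whence $H_T^r(G,U_L) \cong \varprojlim_n H_T^r(G, U_L/U_L^{(n)}) = 0$. I would additionally spell out the case $r=0$ as a successive-approximation argument, since that is the form actually used afterwards: given $u \in U_K$, surjectivity of $\ell^\times \to k^\times$ yields $v_0 \in U_L$ with $\Nm(v_0)u^{-1} \in U_K^{(1)}$; then surjectivity of the trace $\ell^+ \to k^+$ at each level yields $v_n \in U_L^{(n)}$ correcting the error modulo $U_K^{(n+1)}$; and the convergent product $v = \prod_{n\ge 0} v_n \in U_L$ satisfies $\Nm_{L/K}(v) = u$. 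The final assertion is exactly $H_T^0(G,U_L) = U_K/\Nm_{L/K}(U_L) = 0$.
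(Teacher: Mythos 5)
Your proof is correct, and its core is the same as the paper's: the $G$-module identifications $U_L/U_L^{(1)}\cong \ell^\times$, $U_L^{(n)}/U_L^{(n+1)}\cong \ell^+$ (using that a uniformizer of $K$ stays a uniformizer of $L$), vanishing of $H_T^r$ for $\ell^\times$ and $\ell^+$, and the successive-approximation construction of a norm preimage, which is exactly the paper's proof of $H_T^0(G,U_L)=0$. Where you genuinely differ is in the other degrees: the paper observes that the $G$-fixed uniformizer gives a splitting of $G$-modules $L^\times\cong U_L\times\ZZ$, so Hilbert 90 yields $H^1(G,U_L)=0$ in one line, and $2$-periodicity for the cyclic group $G$ then reduces everything to $H_T^0$; you instead prove vanishing in all degrees by d\'evissage along the filtration followed by passage to the inverse limit $U_L=\varprojlim_n U_L/U_L^{(n)}$. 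That route is valid, but the limit step is the delicate point: one needs the ${\varprojlim}^{1}$-obstruction to vanish, which is exactly where the finiteness of the quotients $U_L/U_L^{(n)}$ (Mittag--Leffler) enters, and for the Tate degrees $0$ and $-1$ it is cleaner to argue on the $2$-periodic complex $M\xrightarrow{\sigma-1}M\xrightarrow{\Nm_G}M$ of the cyclic group than on ``Tate modifications'' of the standard cochains, since cokernels do not commute with inverse limits in general. What your approach buys is uniformity (no appeal to Hilbert 90 for $L^\times$ or to the splitting of $L^\times$); what the paper's buys is brevity, isolating the only genuinely analytic input --- the $r=0$ approximation argument --- which is also the only statement used later.
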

\begin{proof}
$G$ is a cyclic group since $L/K$ is unramified. Since Tate cohomology of a cyclic group is 2-periodic, it suffices to show that $H_T^0(G, U_L)= H_T^1(G,U_L)=0$. Since $L/K$ is unramified, a uniformizer $\pi$ of $K$ is also a uniformizer of $L$. This implies that uniformizer is fixed by Galois element, such that we have a decomposition of $G$-modules $L^\times \cong U_L \times \ZZ$, and therefore
$$0 = H^1(G,L^\times) \cong H^1(G, U_L) \times H^1(G,\ZZ) \implies H^1(G,U_L) = 0$$

Thus it remains to prove that $H_T^0(G,U_L)=U_L^G / \on{Nm}_G(U_L) = U_K / \on{Nm}_{L/K}(U_L)=0$, i.e. $\on{Nm}_{L/K} : U_L \ra U_K$ is surjective. We prove this by passing to quotient fields of unit groups.

Denoting $\ell, k$ by residue fields of $L,K$ and $U_K^{(m)} = 1 + \mf{m}_K^m$, observe that $U_L / U_L^{(1)} \cong \ell^\times$ and $U_L^{(m)} / U_L^{(m+1)} \cong \ell^+$ as $G$-modules because $u \mapsto u \text{ mod }\mf{m}_L$ and $1+ a \pi^m \mapsto a\text{ mod } \mf{m}_L$ induce the isomorphisms.

We claim that trace and norm maps $\on{Nm}: \ell^\times \ra k^\times$ and $\on{Tr}: \ell^+ \ra k^+$ are surjective. This follows from cohomology computation 
$$H_T^r(\Gal(\ell/k),\ell^\times)=H_T^r(\Gal(\ell/k),\ell^+)=0$$
The first follows from Hilbert 90: $H_T^1(\Gal(\ell/k),\ell^\times) = 0$ and $H_T^0(\Gal(\ell/k),\ell^\times)=0$ too due to the Herbrand quotient being 1 for finite module over finite cyclic group. The second is due to normal basis theorem and periodicity of Tate cohomology for a finite cyclic group.

Now consider the commutative diagrams
\[\begin{tikzcd}
U_L \ar[r] \ar[d,"\on{Nm}"] & \ell^\times \ar[d,"\on{Nm}"] \\
U_K \ar[r] & k^\times
\end{tikzcd}
\begin{tikzcd}
U_L^{(m)} \ar[r] \ar[d,"\on{Nm}"] & \ell^+ \ar[d,"\on{Tr}"] \\
U_K^{(m)} \ar[r] & k^+
\end{tikzcd}\]
Given $u \in U_K$, we will construct $v \in U_L$ with $\Nm(v)=u$ by passing to quotients and taking a limit. By surjectivity we can find $v_0 \in U_L$ such that $u / \Nm(v_0) \in U_K^{(1)}$. By surjectivity of trace $\ell^+ \ra k^+$, we can find $v_1 \in U_L^{(1)}$ such that $ u / \Nm(v_0v_1) \text{ (mod $U_K^{(2)}$)}$. Continuing this way, we can find $v_0, \cdots v_m$ such that $u / \Nm(v_0 \cdots v_n) \in U_K^{(m+1)}$. Now let
$$v = \lim_{m \ra \infty} v_1 \cdots v_m$$
(the limit converges since $v_1 \cdots v_m - v_1 \cdots v_s $ for large $m,s$ has small value in the local field) and we have $u / \Nm(v) \in \bigcap_{m=1}^\infty U_K^{(m)} = 1$ and thus $u = \Nm(v)$.
\end{proof}

Now we have isomorphisms $\on{inv}_{L/K}: H^2(L/K) \ra \frac1{[L:K]}\ZZ/\ZZ$. Let $E/L/K$ finite unramified extensions. By regarding $\frac1{[L:K]}\ZZ/\ZZ$ as a subgroup of $\QQ / \ZZ$, the following diagram commutes:
\[\begin{tikzcd}
H^2(L/K) \ar[rd,"\on{inv}_{L/K}"] \ar[rr,"\on{Inf}"] & & H^2(E/K) \ar[ld,"\on{inv}_{E/K}"] \\
& \QQ/\ZZ & 
\end{tikzcd}\]
and therefore by Proposition~\ref{profinite limit} we have the following isomorphism:
\begin{align*}
\on{inv}_K : H^2(K^{\on{ur}} / K) \ra \QQ/\ZZ
\end{align*}

We call $(\on{inv}_{L/K})^{-1}(\frac1{[L:K]}) = u_{L/K} \in H^2(L/K)$ the fundamental class of $L/K$. We can compute $u_{L/K}$ and even the local reciprocity map explicitly.

\begin{proposition}
$u_{L/K} \in H^2(L/K)$ is represented by the cocycle 
$$\varphi(\sigma^i, \sigma^j) = \begin{cases} 1 & \text{ if } i+j \le n-1 \\ \pi & \text{ if }i+j>n-1 \end{cases}$$
\end{proposition}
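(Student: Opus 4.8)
The plan is to evaluate $\on{inv}_{L/K}([\varphi])$ in $\tfrac1n\ZZ/\ZZ$, where $n=[L:K]$, and check that it equals $\tfrac1n$; since the fundamental class was \emph{defined} as $u_{L/K}=\on{inv}_{L/K}^{-1}(\tfrac1n)$, this identifies $[\varphi]=u_{L/K}$ at once. Write $\sigma=\on{Frob}_{L/K}$, so that $G=\langle\sigma\rangle$ is cyclic of order $n$ and each element of $G$ is $\sigma^i$ for a unique $i\in\{0,\dots,n-1\}$. First I would recognize $\varphi$ as a twisted carry cocycle: let $c\colon G\times G\to\ZZ$ be the carry function $c(\sigma^i,\sigma^j)=\lfloor(i+j)/n\rfloor\in\{0,1\}$ on the canonical representatives $0\le i,j\le n-1$, so $c(\sigma^i,\sigma^j)=1$ exactly when $i+j>n-1$, and observe $\varphi(\sigma^i,\sigma^j)=\pi^{c(\sigma^i,\sigma^j)}$. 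Since $L/K$ is unramified, $\pi$ is a uniformizer of $L$ and lies in $K^\times$, hence is fixed by $G$; consequently the inhomogeneous $2$-cocycle identity for $\varphi$ collapses to the additive $2$-cocycle identity for $c$ with trivial action, so it suffices to know $c$ is a cocycle, which is covered below.

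Next I would pin down the class of $c$ via a connecting map. Let $\chi\colon G\to\QQ/\ZZ$ be the homomorphism $\chi(\sigma^i)=\tfrac in+\ZZ$ (this is precisely the standard isomorphism $G\cong\tfrac1n\ZZ/\ZZ$, so $\chi(\sigma)=\tfrac1n$), and lift it set-theoretically to the $1$-cochain $\tilde\chi\colon G\to\QQ$, $\tilde\chi(\sigma^i)=\tfrac in$ for $0\le i<n$. A short computation with $(d^1\tilde\chi)(g_1,g_2)=\tilde\chi(g_1)+\tilde\chi(g_2)-\tilde\chi(g_1g_2)$ (trivial action), using $\sigma^i\sigma^j=\sigma^{(i+j)\bmod n}$, yields $d^1\tilde\chi=c$. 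In particular $c$ is a $2$-cocycle, and by the explicit formula for the connecting homomorphism (Proposition~\ref{ext tor delta}) applied to $0\ra\ZZ\ra\QQ\ra\QQ/\ZZ\ra0$, the map $\delta\colon H^1(G,\QQ/\ZZ)\to H^2(G,\ZZ)$ sends $[\chi]$ to $[c]$.

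Finally I would assemble the pieces. The valuation map $\on{ord}_L\colon L^\times\to\ZZ$ is $G$-equivariant (Galois preserves valuation) with $\on{ord}_L(\pi)=1$, so the induced map $H^2(L/K)\to H^2(G,\ZZ)$ carries $[\varphi]=[\pi^{c}]$ to $[c]=\delta[\chi]$. Recalling that $\on{inv}_{L/K}$ is the composite $H^2(L/K)\xrightarrow{\on{ord}_L}H^2(G,\ZZ)\xleftarrow{\ \delta\ }H^1(G,\QQ/\ZZ)\cong\Hom(G,\QQ/\ZZ)\xrightarrow{\ \chi\mapsto\chi(\sigma)\ }\tfrac1n\ZZ/\ZZ$, we get $\on{inv}_{L/K}([\varphi])=\chi(\sigma)=\tfrac1n$, hence $[\varphi]=u_{L/K}$. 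The one delicate point is the sign bookkeeping in the middle step: one must match the conventions of the differential $d^1$ and of the connecting-map formula so that $c$ comes out as $\delta[\chi]$ for the $\chi$ with $\chi(\sigma)=\tfrac1n$ — a flipped sign would name the reciprocal generator as the fundamental class, corresponding to using $\sigma^{-1}$ in place of $\on{Frob}_{L/K}$. Checking that $\varphi$ is a cocycle and that $\on{ord}_L(\pi)=1$ is routine.
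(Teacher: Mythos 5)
Your proposal is correct and follows essentially the same route as the paper: lift the character $\sigma^i\mapsto \tfrac in$ to $\QQ$, compute that its coboundary is the carry cocycle, and pull back through $\on{ord}_L$ (with $\on{ord}_L(\pi)=1$) to see $\on{inv}_{L/K}([\varphi])=\tfrac1n$. Your write-up just makes explicit the identification $\varphi=\pi^{c}$ and the direction of the connecting map, which the paper leaves implicit.
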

\begin{proof}
Consider uniformizer $\pi \in K$, which is also a prime of $L$. $H^r(G,L^\times) \cong H^r(G,U_L) \oplus H^r(G,\ZZ)$. Take a generator $\sigma$ of $G$. Choose $f \in H^1(G, \QQ/\ZZ) = \Hom(G, \QQ/\ZZ)$ such that $f(\sigma^i) = \frac in \text{ mod }\ZZ$. Define a lift of $f$ by $\tilde f : \sigma^i \mapsto \frac in$ ($\tilde f \in \Hom(G,\QQ)$). Then
\begin{align*}
(\delta \tilde f) (\sigma^i, \sigma^j) := \sigma^i \tilde f(\sigma^j) - \tilde f(\sigma^i \sigma^j) + \tilde f (\sigma^i) = \begin{cases} 0 & \text{if $i+j \le n-1$} \\ 1 & \text{if $i+j > n-1$} \end{cases}
\end{align*}
\end{proof}

\begin{proposition}\label{unramified lrl explicit}
In the map $G \cong H_T^{-2}(G,\ZZ) \ra H_T^0(G,L^\times) \cong \frac{K^\times}{\on{Nm}(L^\times)}$, the Frobenius element $\sigma \in G$ maps to the class of $\pi \text{ mod }\on{Nm}(L^\times)$.
\end{proposition}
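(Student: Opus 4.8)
The map in question is, by its very construction in the proof of Theorem~\ref{tate} applied with $r=-2$ and $C=L^\times$, the composite of the two dimension-shift connecting homomorphisms
$$H_T^{-2}(G,\ZZ)\ \xrightarrow{\ \delta\ }\ H_T^{-1}(G,I_G)\ \xrightarrow{\ \delta\ }\ H_T^{0}(G,L^\times),$$
associated respectively to $0\to I_G\to\ZZ[G]\to\ZZ\to0$ and to $0\to L^\times\to L^\times(\varphi)\xrightarrow{\alpha}I_G\to0$, where $\varphi$ represents the fundamental class $u_{L/K}$ of the preceding proposition and $\alpha(x_\tau)=\tau-1$. So the plan is to chase the Frobenius $\sigma$ through these two maps and then substitute the explicit cocycle $\varphi$.

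For the first map, I would use that the dimension-shift isomorphism $H_T^{-2}(G,\ZZ)=H_1(G,\ZZ)\xrightarrow{\delta}H_T^{-1}(G,I_G)$ is precisely the isomorphism $H_1(G,\ZZ)\cong I_G/I_G^2$ established earlier (note $\Nm_G$ annihilates all of $I_G$, so that $H_T^{-1}(G,I_G)=I_G/I_G^2$); under it the class of $\sigma$ goes to $[\sigma-1]$. For the second map, using the snake-lemma formula $\delta([a])=[\Nm_G(\tilde a)]$ with $\tilde a$ a lift of $a$ through $\alpha$, I take the lift $\tilde a=x_\sigma$ since $\alpha(x_\sigma)=\sigma-1$, and then compute $\Nm_G(x_\sigma)$ from the splitting-module action $\tau\cdot x_\sigma=x_{\tau\sigma}-x_\tau+\varphi(\tau,\sigma)$. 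Summing over $\tau\in G$ and reindexing, the formal symbols telescope away:
$$\Nm_G(x_\sigma)=\sum_{\tau\in G}\bigl(x_{\tau\sigma}-x_\tau+\varphi(\tau,\sigma)\bigr)=\prod_{\tau\in G}\varphi(\tau,\sigma)\ \in\ (L^\times)^G=K^\times .$$
Thus $\sigma$ maps to the class of $\prod_{\tau\in G}\varphi(\tau,\sigma)$ in $H_T^0(G,L^\times)=K^\times/\Nm_{L/K}(L^\times)$; incidentally this already yields the general reciprocity formula $\phi_{L/K}^{-1}([\sigma])=[\prod_{\tau}\varphi(\tau,\sigma)]$ recorded in the overview.

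Finally I would substitute the cocycle from the previous proposition, namely $\varphi(\sigma^i,\sigma^j)=1$ for $i+j\le n-1$ and $\varphi(\sigma^i,\sigma^j)=\pi$ for $i+j>n-1$, with $\sigma$ the Frobenius generator (so $\sigma=\sigma^1$ and $n=[L:K]$). Then $\varphi(\sigma^i,\sigma)=\pi$ exactly when $i=n-1$ and equals $1$ otherwise, hence $\prod_{i=0}^{n-1}\varphi(\sigma^i,\sigma)=\pi$, and therefore $\sigma$ maps to $\pi\bmod\Nm_{L/K}(L^\times)$, which is the assertion.

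The substitution and the telescoping cancellation are routine; the step to be careful about is the bookkeeping of conventions --- the directions of the two $\delta$'s and the sign-free forms of the connecting-homomorphism formulas used above --- so that the final class comes out as $\pi$ rather than, say, $\pi^{-1}$. Alternatively one could identify the composite with cup product by $u_{L/K}$ using compatibility of $\smile$ with $\delta$, and then run the corresponding cup-product computation; but the direct chase above is self-contained given what precedes.
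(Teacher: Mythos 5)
Your proposal is correct and follows essentially the same route as the paper: identify $H_T^{-2}(G,\ZZ)\cong I_G/I_G^2$ via $[\sigma]\mapsto[\sigma-1]$, lift through $\alpha$ to $x_\sigma$ in the splitting module $L^\times(\varphi)$, apply the snake-lemma formula $\delta([a])=[\Nm_G(\tilde a)]$ so that the symbols telescope to $\prod_{\tau}\varphi(\tau,\sigma)$, and then substitute the explicit unramified cocycle to get $\pi$. The only cosmetic difference is that the paper writes out the telescoping product term by term while you reindex the sum, and your remark that $\Nm_G$ kills $I_G$ (so $H_T^{-1}(G,I_G)=I_G/I_G^2$) is a small justification the paper leaves implicit.
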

\begin{proof}
As seen earlier, $H^{-2}_T(G,\ZZ) \cong H_T^{-1}(G,I_G) = I_G/I_G^2$, $\sigma \mapsto [\sigma-1]$. The boundary map $H_T^{-1}(G, I_G) \cong H_T^0(G,L^\times)$ is given by the snake lemma. 

The element $(\sigma-1)+I_G^2$ is the image of $x_\sigma + I_G L^\times (\varphi)$ in $L^\times(\varphi)_G$. $\on{Nm}_G(x_\sigma + I_G L^\times (\varphi))$ is the product of
\begin{align*}
x_\sigma &= x_\sigma \\
\sigma x_\sigma &= x_{\sigma^2}x_\sigma^{-1} \varphi(\sigma,\sigma) \\
\sigma^2 x_\sigma &= x_{\sigma^3}x_{\sigma^2}^{-1} \varphi(\sigma^2,\sigma) \\
& \vdots \\
\sigma^{n-1} x_\sigma &= x_1x_{\sigma^{n-1}}^{-1} \varphi(\sigma^{n-1}, \sigma)
\end{align*}
(where $x_1=\varphi(1,1)=1$ here) and thus
$$\on{Nm}_G(x_\sigma) = \varphi(\sigma^1, \sigma) \cdots \varphi(\sigma^{n-1}, \sigma)=\pi$$
by the earlier calculation of $\varphi$.
\end{proof}

Since all units of $K$ are norms of $L$, $[\pi]$ and $\on{Nm}(L^\times)$ is independent of the choice of $\pi$. But $L^\times(\varphi)$ and the map depend on $\sigma \in G$.

We prove the following assertion for usage in the next section.

\begin{proposition}\label{unramified qz}
Suppose $L/K$ is a finite extension of degree $n$. The following commutes:
\[\begin{tikzcd}
H^2(K^{\on{ur}}/K) \ar[r,"\text{Res}"] \ar[d,"\on{inv}_K"] & H^2(L^{\on{ur}}/L) \ar[d,"\on{inv}_K"] \\
\QQ/\ZZ \ar[r, "\cdot n"] & \QQ/\ZZ
\end{tikzcd}\]
(here $\on{Res}$ is defined using compatible maps $\Gal(L^{\on{ur}}/L) \rightarrow \Gal(K^{\on{ur}}/K)$ and $K^{\on{ur} \times} \rightarrow L^{\on{ur} \times}$)
\end{proposition}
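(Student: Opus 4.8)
The plan is to unwind the definition of the invariant map into a composite of three maps, each natural in the relevant sense, and to track which one picks up the ramification index $e=e(L/K)$ and which one picks up the residue degree $f=f(L/K)$; since $[L:K]=ef$, these multiply to the asserted factor $n$.

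First I would recall, using Proposition~\ref{profinite limit} to pass to the limit over the finite unramified layers, that $\on{inv}_K$ is the composite $\on{ev}_{\on{Frob}_K}\circ\delta_K^{-1}\circ(\on{ord}_K)_*$, where $(\on{ord}_K)_*\colon H^2(\Gal(K^{\on{ur}}/K),K^{\on{ur}\times})\to H^2(\Gal(K^{\on{ur}}/K),\ZZ)$ is induced by the valuation $\on{ord}\colon K^{\on{ur}\times}\to\ZZ$ (the value group of $K^{\on{ur}}$ is $\ZZ$ since $K^{\on{ur}}/K$ is unramified), $\delta_K\colon H^1(\Gal(K^{\on{ur}}/K),\QQ/\ZZ)\xrightarrow{\ \sim\ }H^2(\Gal(K^{\on{ur}}/K),\ZZ)$ is the connecting homomorphism of $0\to\ZZ\to\QQ\to\QQ/\ZZ\to0$ (an isomorphism since $H^r(\,\cdot\,,\QQ)=0$ by Proposition~\ref{integer cohomology}), and $\on{ev}_{\on{Frob}_K}$ evaluates an element of $H^1(\Gal(K^{\on{ur}}/K),\QQ/\ZZ)=\Hom_{\mathrm{cont}}(\Gal(K^{\on{ur}}/K),\QQ/\ZZ)$ at the topological generator $\on{Frob}_K$. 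The same recipe describes $\on{inv}_L$, with $L^{\on{ur}}$ and $\on{Frob}_L$.

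Next I would establish the two arithmetic inputs and feed them through naturality. Let $\alpha\colon\Gal(L^{\on{ur}}/L)\to\Gal(K^{\on{ur}}/K)$ be restriction of automorphisms and $\beta\colon K^{\on{ur}\times}\hookrightarrow L^{\on{ur}\times}$ the inclusion; these form the compatible pair that defines $\on{Res}$ via Proposition~\ref{cor res general}. The first input is the identity $\on{ord}_{L^{\on{ur}}}\circ\beta = e\cdot\on{ord}_{K^{\on{ur}}}$ of $\Gal$-maps $K^{\on{ur}\times}\to\ZZ$, because a uniformizer of $K$ has valuation $e$ in $L$ and hence in $L^{\on{ur}}$. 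The second is $\alpha(\on{Frob}_L)=\on{Frob}_K^{\,f}$, since $\on{Frob}_L$ acts on residue fields as the $q^f$-power map where $q=|k|$ and $q^f=|\ell|$, whereas $\on{Frob}_K$ is the $q$-power map. Writing $\on{Res}^\alpha$ for the change-of-group map on cohomology with unchanged (trivially acted) coefficients, functoriality of the construction of Proposition~\ref{cor res general} and naturality of $\delta$ give, in turn: $(\on{ord}_L)_*\circ\on{Res} = e\cdot\bigl(\on{Res}^\alpha\circ(\on{ord}_K)_*\bigr)$; then $\on{Res}^\alpha\circ\delta_K^{-1} = \delta_L^{-1}\circ\on{Res}^\alpha$; and finally, since $\on{Res}^\alpha$ is precomposition with $\alpha$ on $\Hom_{\mathrm{cont}}$, $\on{ev}_{\on{Frob}_L}\circ\on{Res}^\alpha = f\cdot\on{ev}_{\on{Frob}_K}$. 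Chaining these three equalities yields $\on{inv}_L\circ\on{Res} = ef\cdot\on{inv}_K = n\cdot\on{inv}_K$, which is the desired commutativity.

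The main obstacle is not any one deep point but the bookkeeping: one must verify that the factor $e$ emerges precisely from $(\on{ord}_L)_*\circ\on{Res}$ and nowhere else, that $\delta$ contributes no factor because it only changes the group and leaves the coefficient sequence $0\to\ZZ\to\QQ\to\QQ/\ZZ\to0$ untouched, and --- so that Proposition~\ref{profinite limit} legitimately reduces everything to $K^{\on{ur}}$ and $L^{\on{ur}}$ --- that $\on{Res}$ for profinite cohomology built from the pair $(\alpha,\beta)$ restricts at each finite unramified layer to the ordinary restriction between $H^2(K'/K)$ and $H^2(K'L/L)$. The only genuinely substantive ingredients are the two identities $\on{ord}_{L^{\on{ur}}}|_{K^{\on{ur}\times}}=e\cdot\on{ord}_{K^{\on{ur}}}$ and $\alpha(\on{Frob}_L)=\on{Frob}_K^{\,f}$, together with $[L:K]=ef$; granting these, the rest is a formal diagram chase.
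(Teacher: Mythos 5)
Your proposal follows essentially the same route as the paper: both decompose $\on{inv}$ through $(\on{ord})_*$, the connecting map $\delta$ for $0\to\ZZ\to\QQ\to\QQ/\ZZ\to 0$, and evaluation at Frobenius, extracting the factor $e$ from the valuation square and the factor $f$ from $\on{Frob}_L|_{K^{\on{ur}}}=\on{Frob}_K^{\,f}$, so that $n=ef$. The argument is correct and matches the paper's three-square diagram chase.
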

\begin{proof}
The maximal unramified extension of a local field is obtained by adjoining all $m$th root of unity where $m$ is coprime to characteristic of the base field. Thus $L^{\on{ur}} = L \cdot K^{\on{ur}}$. Thus we have injectivity of the restriction map $\Gal(L^{\on{ur}}/L) \ra \Gal(K^{\on{ur}}/K)$. 

Consider 
\[\begin{tikzcd}
H^2(K^{\on{ur}}/K) \ar[r,"\on{ord}_K"] \ar[d,"\on{Res}"] & H^2(\Gal(K^{\on{ur}}/K),\ZZ) \ar[d,"e \cdot \on{Res}"] & H^1(\Gal(K^{\on{ur}}/K),\QQ/\ZZ) \ar[l,"\delta"] \ar[r,"\cong"] \ar[d,"e \cdot \on{Res}"] & \QQ/\ZZ \ar[d,"n = ef"] \\
H^2(L^{\on{ur}}/L) \ar[r,"\on{ord}_L"] & H^2(\Gal(L^{\on{ur}}/L),\ZZ) & H^1(\Gal(L^{\on{ur}}/L),\QQ/\ZZ) \ar[l,"\delta"] \ar[r,"\cong"] & \QQ/\ZZ 
\end{tikzcd}\]
We now check commutativity. The first square comes from the commutative square
\[\begin{tikzcd}
(K^{\on{ur}})^\times \ar[r,"\on{ord}_K"] \ar[d] & \ZZ \ar[d,"e"] \\
(L^{\on{ur}})^\times \ar[r,"\on{ord}_L"] & \ZZ
\end{tikzcd}\]
The second square is commutative because restriction map commutes with connecting homomorphism. To check commutativity of the third square, first note that $H^1(\Gal(K^{\on{ur}}/K),\QQ/\ZZ) \cong \Hom(\Gal(K^{\on{ur}}/K),\QQ/\ZZ)$ and the row isomorphisms are obtained by evaluation at Frobenius generator. $\on{Frob}_L|_K = (\on{Frob}_K)^f$ because Frobenius is determined by its action on the residue fields. Thus the third square commutes too.
\end{proof}

\subsection{Proof Part III: $H^2(L/K)$ is cyclic (general)}

We extend the previously obtained results on unramified extensions to ramified extensions and complete the proof of local reciprocity.

In this section, we denote by $\bar K$ the separable closure of $K$ (which is just algebraic closure in the case of characteristic 0).

\begin{theorem}
For every local field, there is a canonical isomorphism
$$\on{inv}_{K} : H^2(\bar K/K) \cong \QQ/\ZZ $$ and if $L/K$ is finite of degree $n$, then we have a commutative diagram with vertical maps being isomorphisms:
\[\begin{tikzcd}
0 \ar[r] & H^2(L/K) \ar[r] \ar[d,"\on{inv}_{L/K}"] & H^2(\bar K / K) \ar[r,"\on{Res}"] \ar[d, "\on{inv}_K"] & H^2(\bar K / L) \ar[d, "\on{inv}_L"] \\
0 \ar[r] & \frac1n \ZZ/\ZZ \ar[r]& \QQ/\ZZ \ar[r,"\cdot n"] & \QQ/\ZZ
\end{tikzcd}\]
where in particular
$$\on{inv}_{L/K}: H^2(L/K) \cong \frac1n \ZZ / \ZZ$$
\end{theorem}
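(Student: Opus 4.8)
The plan is to reduce everything to the unramified case, where we already have $\on{inv}_K\colon H^2(K^{\on{ur}}/K)\cong\QQ/\ZZ$ and, by Proposition~\ref{unramified qz}, the fact that $\on{Res}\colon H^2(K^{\on{ur}}/K)\ra H^2(L^{\on{ur}}/L)$ corresponds to multiplication by $n$. The bottom row of the theorem is exact because $\ker(\cdot n\colon\QQ/\ZZ\ra\QQ/\ZZ)=\tfrac1n\ZZ/\ZZ$. For the top row, apply the inflation--restriction sequence to the closed normal subgroup $\Gal(\bar K/L)\trianglelefteq\Gal(\bar K/K)$ acting on $\bar K^\times$: since $(\bar K^\times)^{\Gal(\bar K/L)}=L^\times$ and $H^1(\Gal(\bar K/L),\bar K^\times)=0$ by Hilbert's Theorem~90, the sequence
$$0\ra H^2(L/K)\xrightarrow{\on{Inf}}H^2(\bar K/K)\xrightarrow{\on{Res}}H^2(\bar K/L)$$
is exact. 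The same argument, applied to the closed normal subgroups $\Gal(\bar K/K^{\on{ur}})$ and $\Gal(\bar K/L^{\on{ur}})$, shows $\on{Inf}\colon H^2(K^{\on{ur}}/K)\injto H^2(\bar K/K)$ and $\on{Inf}\colon H^2(L^{\on{ur}}/L)\injto H^2(\bar K/L)$ are injective.

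Next I form the diagram
\[\begin{tikzcd}
0\ar[r] & \tfrac1n\ZZ/\ZZ\ar[r] & H^2(K^{\on{ur}}/K)\ar[r,"\on{Res}"]\ar[d,"\on{Inf}"] & H^2(L^{\on{ur}}/L)\ar[d,"\on{Inf}"]\\
0\ar[r] & H^2(L/K)\ar[r] & H^2(\bar K/K)\ar[r,"\on{Res}"] & H^2(\bar K/L)
\end{tikzcd}\]
with both rows exact (for the top, $\ker\on{Res}=\tfrac1n\ZZ/\ZZ$ by Proposition~\ref{unramified qz}, using $L^{\on{ur}}=L\cdot K^{\on{ur}}$). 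The right-hand square commutes because both composites $H^2(K^{\on{ur}}/K)\ra H^2(\bar K/L)$ are induced, via Proposition~\ref{cor res general}, by the same compatible pair: the composite group homomorphism $\Gal(\bar K/L)\ra\Gal(K^{\on{ur}}/K)$ (restriction of automorphisms) together with the inclusion $(K^{\on{ur}})^\times\injto\bar K^\times$. A diagram chase then gives: because the right-hand $\on{Inf}$ is injective, an element of $H^2(K^{\on{ur}}/K)$ inflates into $\ker(\on{Res}\colon H^2(\bar K/K)\ra H^2(\bar K/L))=\on{Inf}(H^2(L/K))$ precisely when it lies in $\tfrac1n\ZZ/\ZZ$; hence $\on{Inf}$ induces an injection $\tfrac1n\ZZ/\ZZ\injto H^2(L/K)$, and in particular $|H^2(L/K)|\ge n$.

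It remains to show $|H^2(L/K)|\le n$ for every finite Galois $L/K$; I do this by induction on $n=[L:K]$. The case $n=1$ is trivial. Otherwise $G=\Gal(L/K)$ is solvable (Proposition~\ref{local solvable}), so it admits a normal subgroup $N$ with $G/N$ cyclic and nontrivial; put $K'=L^{N}$, a cyclic Galois extension of $K$. Inflation--restriction (using $H^1(\Gal(L/K'),L^\times)=0$) gives exactness of
$$0\ra H^2(K'/K)\xrightarrow{\on{Inf}}H^2(L/K)\xrightarrow{\on{Res}}H^2(L/K'),$$
whence $|H^2(L/K)|\le|H^2(K'/K)|\cdot|H^2(L/K')|\le[K':K]\cdot[L:K']=n$ once the bound is known in the two smaller degrees. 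The factor $H^2(L/K')$ is covered by the inductive hypothesis; for the cyclic extension $K'/K$, $2$-periodicity gives $|H^2(K'/K)|=|H_T^0(\Gal(K'/K),(K')^\times)|=h((K')^\times)$ (as $H^1=0$ by Hilbert~90), and from $0\ra U_{K'}\ra(K')^\times\xrightarrow{\on{ord}}\ZZ\ra0$ with $h(\ZZ)=[K':K]$ (Proposition~\ref{integer cohomology}) and $h(U_{K'})=1$ we obtain $h((K')^\times)=[K':K]$. The identity $h(U_{K'})=1$ is the one genuinely arithmetic input: a sufficiently deep layer $U_{K'}^{(m)}$ of the unit filtration is $\Gal(K'/K)$-isomorphic, via the $p$-adic logarithm, to $\mc O_{K'}$, which by the normal basis theorem contains a finite-index $\Gal(K'/K)$-submodule isomorphic to the induced module $\mc O_K[\Gal(K'/K)]$; induced modules are cohomologically trivial, and the Herbrand quotient is insensitive to finite modules and to maps with finite kernel and cokernel.

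Putting the two inequalities together, $|H^2(L/K)|=n$ and the injection $\tfrac1n\ZZ/\ZZ\injto H^2(L/K)$ is an isomorphism, which we name $\on{inv}_{L/K}$. Letting $L$ range over all finite Galois extensions and using $H^2(\bar K/K)=\lim_\rightarrow H^2(L/K)$ (Proposition~\ref{profinite limit}, since $\bar K^\times$ is a discrete $\Gal(\bar K/K)$-module), every class of $H^2(\bar K/K)$ lies in $\on{Inf}(H^2(K^{\on{ur}}/K))$; hence $\on{Inf}\colon H^2(K^{\on{ur}}/K)\xrightarrow{\sim}H^2(\bar K/K)$, and we define $\on{inv}_K$ to be its inverse composed with the unramified invariant, and $\on{inv}_L$ analogously. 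Commutativity of the theorem's right square is then precisely Proposition~\ref{unramified qz} transported through these identifications (using the right-hand square verified above), and restricting to kernels reads off $\on{inv}_{L/K}\colon H^2(L/K)\cong\tfrac1n\ZZ/\ZZ$. The main obstacle in the whole argument is the cyclic case $h(U_{K'})=1$, equivalently the local norm-index bound $[K^\times:\on{Nm}_{K'/K}((K')^\times)]\le[K':K]$; everything else is inflation--restriction bookkeeping and diagram chasing.
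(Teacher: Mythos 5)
Your proposal is correct and follows essentially the same route as the paper: the same inflation--restriction diagram embedding $\ker(\on{Res})\cong\frac1n\ZZ/\ZZ$ into $H^2(L/K)$, the same solvability induction reducing the bound $|H^2(L/K)|\le n$ to the cyclic case, and the same Herbrand-quotient computation $h(U_{K'})=1$, $h((K')^\times)=[K':K]$ (your $p$-adic logarithm onto a normal-basis lattice is just the inverse of the paper's exponential construction). The concluding direct-limit argument identifying $H^2(\bar K/K)$ with $H^2(K^{\on{ur}}/K)\cong\QQ/\ZZ$ likewise matches the paper's sandwiching step.
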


\begin{proof}
We first prove $H^2(L/K) \cong \frac1n \ZZ / \ZZ$ by embedding it into $H^2(\bar K / K)$ and using the unramified case. Observe the following commutative diagram:
\[\begin{tikzcd}
0 \ar[r] & \ker(\on{Res}) \ar[r] \ar[d] & H^2(K^{\on{ur}}/K) \ar[r,"\on{Res}"] \ar[d,"\on{Inf}"] & H^2(L^{\on{ur}} / L) \ar[d,"\on{Inf}"] \\
0 \ar[r] & H^2(L/K) \ar[r] & H^2(\bar K / K) \ar[r,"\on{Res}"] & H^2(\bar L / L)
\end{tikzcd}\]
Here, we use Hilbert 90 for infinite extensions to apply Inflation-Restriction sequence twice and see that the bottom row is exact and the vertical $\on{Inf}$ maps are injections. Thus $\ker(\on{Res}) \hookrightarrow H^2(L/K)$ too, and by Proposition~\ref{unramified qz}, $\ker(\on{Res}) \cong \frac 1n \ZZ / \ZZ$, there is a copy of $\frac1n \ZZ/\ZZ$ in $H^2(L/K)$. Therefore it now suffices to prove $|H^2(L/K)| \le n$ to prove $H^2(L/K) \cong \frac1n \ZZ / \ZZ$.

We prove $H^2(L/K) \cong \frac1n\ZZ / \ZZ$ by induction on degree $[L:K]$ and using solvability. Proposition~\ref{local solvable} tells us that $\Gal(L/K)$ is solvable, and thus we may find a an intermediate extension $K'$ with $\Gal(K'/K)$ finite cyclic. By Hilbert 90, we can use the inflation-restriction sequence:
$$0 \ra H^2(K'/K) \xrightarrow{\on{Inf}} H^2(L/K) \xrightarrow{\on{Res}} H^2(L/K')$$
It can be shown that $|H^2(L/K)|=[L:K]$ for finite cyclic extension $L/K$ of local fields, the proof of which we postpone. Assuming this fact, we see by the induction hypothesis that $|H^2(L/K)| \le |H^2(K'/K)| \cdot |H^2(L/K')| = n$. This along with the earlier observation that $\frac1n\ZZ / \ZZ$ injects into $H^2(L/K)$ tells us that the injection is an isomorphism.

Now $H^2(\bar K / K) \cong \QQ/\ZZ$ holds by `sandwiching' since $\forall L/K$ finite, $H^2(L/K) \cong \ker(\on{Res}) \xrightarrow{\on{Inf}} H^2(\bar K / K) \cong \lim_\rightarrow H^2(L/K) = \bigcup H^2(L/K)$.

We complete the proof by proving the following result that we previously postponed:
\begin{lemma}\label{cyclic brauer}
For a cyclic extension of local fields $L/K$ of degree $n$, $h(U_L)=1$ and $h(L^\times)=n$. In particular, $|H^2(L/K)|=n$
\end{lemma}
\begin{proof}
To prove $h(U_L)=1$, we'll later construct an open subgroup of $U_L$ such that $\forall r>0, H^r(G,V)=0$ in particular satisfying $h(V)=1$. Once this is done, we get an exact sequence
$$0 \ra V \ra U_L \ra U_L / V \ra 0$$
from which we deduce that $h(U_L) = h(V) h(U_L/V)$, but since $U_L$ is compact, $U_L / V$ is finite, we see that $h(U_L/V) = 1$ and therefore $h(U_L)=1 \cdot 1 = 1$. 

To prove $h(L^\times)=n$, we simply use the exact sequence
$$1 \ra U_L \ra L^\times \xrightarrow{\on{ord}_L} \ra \ZZ \ra 1$$
from which we deduce $h(L^\times)=h(U_L) h(\ZZ) = h(\ZZ) = \frac{|H_T^0(G,\ZZ)|}{|H_T^1(G,\ZZ)|} = \frac{|H_T^0(G,\ZZ)|}{|\Hom(G,\ZZ)|} = |H_T^0(G,\ZZ)| = |\ZZ/n\ZZ| = n$ and we're done.

Now we construct $V$, but first we construct $V_0 \subseteq \mathcal O_L$ with vanishing cohomology. Take a normal basis of $L$ over $K$, $\{x_\tau | \tau \in G\}$. $x_\tau$ have a common denominator $d$ in $\mc O_K$. By replacing $x_\tau$ by $d \cdot x_\tau$, we may suppose they lie in $\mc O_L$. Letting $V_0 = \bigoplus \mc O_K x_\tau \subseteq \mc O_L$, $V_0 \cong \mc O_K[G] \cong \Hom_{\ZZ}(\ZZ[G], \mc O_K)$ and thus $V_0$ has vanishing cohomology.

$V$ is obtained by moving $V_0$ by exponential map (here we only prove the characteristic zero case). The exponential map $\exp x = \sum_{m=0}^\infty \frac{x^m}{m!}$ converges when $\on{ord}(x) > \frac{\on{ord}(p)}{p-1}$ and moves an open neighborhood of $0$ to an open neighborhood of $1$ isomoprhically. Let $V = \exp(\pi^M V_0)$ where $M$ is large enough so that $\exp x$ is well-defined.
\end{proof}
\end{proof}

This proves that we have an isomorphism
$$\phi_{L/K}^{-1} : \Gal(L/K) \cong \frac{K^\times}{\Nm_{L/K}(L^\times)} $$
To prove the full reciprocity law, we will need the following compatibility result:

\begin{proposition}
For finite Galois extensions of local fields $E/L/K$, the following commutes:
\[\begin{tikzcd}
& K^\times \ar[ld,"\tilde \phi_{L/K}"]\ar[rd,"\tilde \phi_{E/K}"] & \\
\Gal(L/K)^{\on{ab}}  \ar[rr,"\text{restrict}"] & & \Gal(E/K)^{\on{ab}} 
\end{tikzcd}\]
where $\tilde \phi_{L/K}:K^\times \ra \Gal(L/K)^{\on{ab}}$ is composition of $\phi_{L/K}$ with projection $K^\times \ra K^\times/\Nm_{L/K}(L^\times)$.
\end{proposition}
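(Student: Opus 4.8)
The plan is to reduce the triangle to a cohomological compatibility and then exploit the description of $\phi_{L/K}^{-1}$ as cup product with the fundamental class, tested against characters. Write $\mathcal G=\Gal(E/K)$, $G=\Gal(L/K)$ and $\mathcal H=\Gal(E/L)\trianglelefteq\mathcal G$, so that $\mathcal G/\mathcal H=G$ and restriction of automorphisms is the quotient $\pi:\mathcal G\to G$, $\sigma\mapsto\sigma|_L$, inducing $\lambda:\mathcal G^{\on{ab}}\to G^{\on{ab}}$ (the triangle asserts $\tilde\phi_{L/K}(a)=\tilde\phi_{E/K}(a)|_L$ for all $a\in K^\times$). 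Since $\Nm_{E/K}=\Nm_{L/K}\circ\Nm_{E/L}$ we have $\Nm_{E/K}(E^\times)\subseteq\Nm_{L/K}(L^\times)$, so both $\tilde\phi_{E/K}$ and $\tilde\phi_{L/K}$ factor through the tautological surjection $q:K^\times/\Nm_{E/K}(E^\times)\to K^\times/\Nm_{L/K}(L^\times)$ induced by $\on{id}_{K^\times}$; hence the proposition is equivalent to $\lambda\circ\phi_{E/K}=\phi_{L/K}\circ q$, i.e.\ to showing $\sigma|_L=\phi_{L/K}(a)$ whenever $\sigma=\phi_{E/K}(a)$.

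First I would recast $\phi_{L/K}^{-1}$ as a cup product. Tracing the proof of Tate's theorem, the composite $G^{\on{ab}}\cong H_T^{-2}(G,\ZZ)\to H_T^0(G,L^\times)=K^\times/\Nm_{L/K}(L^\times)$ coming from the two dimension shifts along $0\ra I_G\ra\ZZ[G]\ra\ZZ\ra0$ and $0\ra L^\times\ra L^\times(\varphi)\ra I_G\ra0$ is precisely $\xi\mapsto\xi\smile[\varphi]$, where $[\varphi]$ is taken to be the fundamental class $u_{L/K}$ (the choice forcing property~(1) of the Local Reciprocity Law, by Proposition~\ref{unramified lrl explicit}). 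Combining this with associativity and graded commutativity of the cup product, with $\on{inv}_{L/K}(u_{L/K})=\tfrac1{[L:K]}$, and with the fact that the pairing $G^{\on{ab}}\times\Hom(G,\QQ/\ZZ)\to\QQ/\ZZ$, $(\sigma,\chi)\mapsto\chi(\sigma)$, is, up to the identification $H_T^0(G,\ZZ)=\ZZ/|G|\hookrightarrow\QQ/\ZZ$, computed by $\sigma\smile\delta\chi$ (with $\delta$ the connecting map of $0\ra\ZZ\ra\QQ\ra\QQ/\ZZ\ra0$), one obtains the dual characterization
$$\chi\bigl(\phi_{L/K}(a)\bigr)=\on{inv}_{L/K}\bigl(\bar a\smile\delta\chi\bigr),\qquad a\in K^\times,\ \chi\in\Hom(G,\QQ/\ZZ),\ \bar a\in H_T^0(G,L^\times).$$
Since characters separate points of $G^{\on{ab}}$, this pins down $\phi_{L/K}(a)$, and the analogue holds at level $E/K$.

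Now I would combine the two. For $\chi\in\Hom(G,\QQ/\ZZ)$, inflation along $\pi$ gives $\on{Inf}\chi=\chi\circ\pi\in\Hom(\mathcal G,\QQ/\ZZ)$, and $\chi(\sigma|_L)=\chi(\lambda(\sigma))=(\on{Inf}\chi)(\sigma)$. Applying the dual characterization at level $E/K$ to $\sigma=\phi_{E/K}(a)$ and the character $\on{Inf}\chi$, and using that $\delta$ commutes with inflation (both $\ZZ$ and $\QQ/\ZZ$ carry trivial action), gives $\chi(\sigma|_L)=\on{inv}_{E/K}\bigl(\bar a_E\smile\on{Inf}(\delta\chi)\bigr)$ with $\bar a_E\in H_T^0(\mathcal G,E^\times)$ and $\on{Inf}:H^2(G,\ZZ)\to H^2(\mathcal G,\ZZ)$. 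At the level of cocycles $\bar a_E\smile\on{Inf}(\delta\chi)$ is represented by $(\rho,\tau)\mapsto a^{(\delta\chi)(\rho|_L,\tau|_L)}$, which is exactly the image, under the inflation map $H^2(G,L^\times)\to H^2(\mathcal G,E^\times)$ (well defined by Hilbert~90, since $L^\times=(E^\times)^{\mathcal H}$ and $H^1(\mathcal H,E^\times)=0$), of the cocycle representing $\bar a_L\smile\delta\chi$. As $\on{inv}_{E/K}$ and $\on{inv}_{L/K}$ are both restrictions of $\on{inv}_K$ on $H^2(\bar K/K)$ along inflation, and inflation is transitive, $\on{inv}_{E/K}\circ\on{Inf}=\on{inv}_{L/K}$ on $H^2(G,L^\times)$. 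Hence $\chi(\sigma|_L)=\on{inv}_{L/K}(\bar a_L\smile\delta\chi)=\chi(\phi_{L/K}(a))$ for every $\chi$, and therefore $\sigma|_L=\phi_{L/K}(a)$.

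The main obstacle is the middle step: establishing the dual characterization $\chi(\phi_{L/K}(a))=\on{inv}_{L/K}(\bar a\smile\delta\chi)$ from the explicit formula $\phi_{L/K}^{-1}([\sigma])=[\prod_{\tau}\varphi(\tau,\sigma)]$ with all sign conventions pinned down. This amounts to (a) identifying $\prod_\tau\varphi(\tau,\sigma)$ with $\on{Nm}_G(x_\sigma)$ in the splitting module, and hence identifying the composite of the two dimension-shift connecting homomorphisms with cup product against $[\varphi]$ — the compatibility between dimension shifting and cup products — and (b) fixing the normalization $[\varphi]=u_{L/K}$ and the sign of the duality pairing $\sigma\mapsto\sigma\smile\delta\chi$. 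Once this is in place the final step is essentially formal: the needed facts — that cup products are compatible with inflation and that $\on{inv}_{E/K}\circ\on{Inf}=\on{inv}_{L/K}$ on $H^2$ — are already available, and since only $\ZZ$- and $L^\times$-coefficient classes are ever inflated here (never a fundamental class), the index factor appearing in $\on{Inf}(u_{L/K})=[E:L]\,u_{E/K}$ never intervenes.
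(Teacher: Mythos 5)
The paper states this compatibility without proof, so there is no internal argument to compare against; your proposal supplies the standard one, and its structure is sound. You reduce the triangle to $\lambda\circ\phi_{E/K}=\phi_{L/K}\circ q$ (implicitly correcting the direction of the ``restrict'' arrow, which in the paper's diagram points the wrong way for $E\supseteq L$), characterize the reciprocity map dually by $\chi(\phi_{L/K}(a))=\on{inv}_{L/K}(\bar a\smile\delta\chi)$, and conclude using that $\on{inv}_{L/K}$ and $\on{inv}_{E/K}$ are both restrictions of $\on{inv}_K$ along (transitive) inflation --- exactly what the theorem of Part III provides --- together with the cocycle-level identity $\bar a_E\smile\on{Inf}(\delta\chi)=\on{Inf}(\bar a_L\smile\delta\chi)$ in degrees $(0,2)$ and separation of points of $\Gal(L/K)^{\on{ab}}$ by characters. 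Your closing observation that no fundamental class is ever inflated, so the relation between $u_{L/K}$ and $u_{E/K}$ never enters, is precisely what makes this route work.

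Two remarks on the step you flag as the main obstacle. Part (a) is already in the paper: Proposition~\ref{lr explicit} together with the proposition ``Local Reciprocity as Cup Product'' identify the composite of the two dimension-shift boundary maps with $[\sigma]\mapsto[\varphi]\smile[\sigma]$, so you may quote this rather than re-derive it, provided you insist (as you do) on the normalization $\on{inv}_{L/K}[\varphi]=\tfrac1n$, about which the paper is loose. Part (b) is the genuine missing ingredient relative to the paper's toolbox: passing from $\bar a=u_{L/K}\smile[\sigma]$ to $\on{inv}_{L/K}(\bar a\smile\delta\chi)=\chi(\sigma)$ uses associativity (and a sign-free graded commutation, since $(-1)^{(-2)\cdot 2}=1$) of the Tate cup product, which the paper's existence/uniqueness statement with properties (1)--(3) never records. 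Granting associativity, the pairing itself is computable from what the paper does prove: by the $H^1\times H^{-2}$ formula and property (2), $(\delta\chi)\smile[\sigma]=\delta[\chi(\sigma)]\in H_T^0(G,\ZZ)\cong\ZZ/n$, and $u_{L/K}\smile k=k\,u_{L/K}$ by the $H^p\times H^0$ formula, whence the invariant is $k/n=\chi(\sigma)$. (You could not instead appeal to property (3) with $0\ra\ZZ\ra\QQ\ra\QQ/\ZZ\ra 0$ tensored with $L^\times$, since torsion in $L^\times$ destroys the required exactness, so the associativity route is the right one.) With that cup-product input supplied --- or with the dual characterization verified directly on cocycles --- your argument is complete.
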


We thus obtain the local reciprocity map:
$$\phi_K: K^\times \rightarrow \Gal(L/K)^{\on{ab}}$$
We also showed earlier in Proposition~\ref{unramified lrl explicit} that inverse of local reciprocity maps Frobenius element to the uniformizer, so this completes the proof of local reciprocity law.

\subsection{Description of Local Reciprocity}

Here we describe the (inverse of) local reciprocity map in two different ways and show that the two are equal. The first description is as a composition of two connecting homomorphisms, as obtained in Tate's theorem (Theorem~\ref{tate}). Another description is obtained by taking cup product with the fundamental class.

The following is an explicit description of the map obtained in Tate's theorem.
\begin{proposition}\label{lr explicit}
For finite Galois extension of local fields $L/K$ and Galois group $G = \Gal(L/K)$, inverse of local reciprocity map
$$G^{\on{ab}} \rightarrow \frac{K^\times}{\Nm_{L/K}(L^\times)}$$
obtained as a composition of isomorphisms
\begin{align*}
G^{\on{ab}} \rightarrow H_T^{-2}(G,\ZZ) \rightarrow H_T^{-1}(G,I_G) \rightarrow H_T^0(G,L^\times) = \frac{K^\times}{\Nm_{L/K}(L^\times)}
\end{align*}
is given by
\begin{align*}
[\sigma] \mapsto \left[ \prod_{\tau \in G} \varphi(\tau,\sigma) \right]
\end{align*}
where $[\varphi]$ is a generator of $H^2(G, L^\times)$.
\end{proposition}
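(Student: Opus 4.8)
The plan is to trace an element $[\sigma] \in G^{\on{ab}}$ through each of the three isomorphisms in turn, using the explicit cocycle-level descriptions already recorded in the excerpt. The three maps are: (i) the identification $G^{\on{ab}} \cong H_T^{-2}(G,\ZZ) \cong H_T^{-1}(G,I_G) = I_G/I_G^2$, which by the computation of $H_1(G,\ZZ)$ sends $[\sigma] \mapsto [\sigma - 1]$; (ii) the connecting homomorphism $\delta : H_T^{-1}(G,I_G) \to H_T^0(G, L^\times(\varphi))$ coming from the short exact sequence $0 \to L^\times \to L^\times(\varphi) \xrightarrow{\alpha} I_G \to 0$ of the splitting module (so that $H_T^0(G, L^\times(\varphi)) \cong H_T^0(G,I_G)$... more precisely we dimension-shift through $L^\times(\varphi)$ whose Tate cohomology vanishes); and (iii) the resulting identification with $H_T^0(G,L^\times) = K^\times/\Nm_{L/K}(L^\times)$. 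The key input is that each dimension shift's connecting map is given by the Snake-Lemma recipe "lift through the surjection, then apply $\Nm_G$," exactly as spelled out for $H_T^{-1} \to H_T^0$ in the Tate-cohomology subsection.

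Concretely, first I would lift $\sigma - 1 \in I_G$ through $\alpha : L^\times(\varphi) \to I_G$ to the element $x_\sigma \in L^\times(\varphi)$ (using $\alpha(x_\sigma) = \sigma - 1$), just as in the proof of Proposition~\ref{unramified lrl explicit}. Then the connecting homomorphism sends $[\sigma-1]$ to $[\Nm_G(x_\sigma)] \in H_T^0(G, L^\times)$, where $\Nm_G(x_\sigma) = \prod_{\tau \in G} \tau \cdot x_\sigma$. Next I would compute $\tau \cdot x_\sigma$ using the module law $\rho \cdot x_\tau = x_{\rho\tau} - x_\rho + \varphi(\rho,\tau)$ (written multiplicatively, since $L^\times(\varphi) = L^\times \oplus \bigoplus_{\sigma \neq 1} x_\sigma^{\ZZ}$ is a multiplicative group here), giving $\tau \cdot x_\sigma = x_{\tau\sigma}\, x_\tau^{-1}\, \varphi(\tau,\sigma)$. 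Taking the product over all $\tau \in G$, the terms $x_{\tau\sigma}$ and $x_\tau^{-1}$ cancel telescopically (as $\tau$ ranges over $G$, so does $\tau\sigma$), leaving exactly $\prod_{\tau \in G} \varphi(\tau,\sigma) \in L^\times$. This telescoping is the heart of the argument and is entirely analogous to the cyclic-case computation already done, only now $G$ need not be cyclic so one must be careful that the bijection $\tau \mapsto \tau\sigma$ is what makes the cancellation work without ordering the product.

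The main obstacle — really a bookkeeping obstacle rather than a conceptual one — is being precise about where the class $\prod_\tau \varphi(\tau,\sigma)$ actually lands and why it represents a well-defined element of $H_T^0(G,L^\times) = K^\times/\Nm_{L/K}(L^\times)$: one must check that $\Nm_G(x_\sigma)$ is genuinely $G$-invariant (so that it represents a class in $H_T^0$, which it does since $\Nm_G$ of anything is $G$-invariant) and that its image is independent of the lift $x_\sigma$ chosen, the choice of generator $[\varphi]$ of $H^2(G,L^\times)$, and the choice of cocycle representative $\varphi$ within its class — all of which follow from the Snake-Lemma well-definedness already established, together with the fact that changing $\varphi$ by a coboundary $\delta\psi$ changes $\prod_\tau \varphi(\tau,\sigma)$ by $\Nm_G$ of something. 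I would also need to address the passage from $H_T^0(G, L^\times(\varphi))$ to $H_T^0(G,L^\times)$: since $H_T^r(G, L^\times(\varphi)) = 0$ for all $r$ (shown in the proof of Tate's theorem), the sequence $0 \to L^\times \to L^\times(\varphi) \to I_G \to 0$ gives $H_T^{-1}(G, I_G) \cong H_T^0(G, L^\times)$ directly, so the "intermediate" group $H_T^0(G,L^\times(\varphi))$ does not actually appear — the connecting map lands straight in $H_T^0(G,L^\times)$, and the computation above gives its value. Assembling these pieces, the composite sends $[\sigma] \mapsto \bigl[\prod_{\tau \in G} \varphi(\tau,\sigma)\bigr]$ as claimed.
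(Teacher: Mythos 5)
Your proposal is correct and follows essentially the same route as the paper's own proof: identify $[\sigma]$ with $[\sigma-1]\in H_T^{-1}(G,I_G)$, lift through $\alpha$ to $x_\sigma$ in the splitting module, apply $\Nm_G$ via the snake-lemma description of the connecting map, and use the action $\tau\cdot x_\sigma = x_{\tau\sigma}x_\tau^{-1}\varphi(\tau,\sigma)$ with the reindexing $\tau\mapsto\tau\sigma$ to obtain $\bigl[\prod_{\tau\in G}\varphi(\tau,\sigma)\bigr]$. Your extra remarks on well-definedness and on the connecting map landing directly in $H_T^0(G,L^\times)$ are consistent with what the paper already establishes in Tate's theorem and the Tate-cohomology section.
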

\begin{proof}
The isomorphism $G^{\on{ab}} \rightarrow H_T^{-2}(G,\ZZ)$ is a composition of isomorphisms $G^{\on{ab}} \rightarrow I_G / I_G^2 = H_T^{-1}(G,I_G) \rightarrow H_T^{-2}(G,\ZZ)$, and thus composing this with isomorphism $H_T^{-2}(G,\ZZ) \rightarrow H_T^{-1}(G,I_G)$ is the same as the isomorphism
\begin{align*}
& G^{\on{ab}} \rightarrow H_T^{-1}(G,I_G) \\
& [\sigma] \mapsto [\sigma -1]
\end{align*}
We also have
\begin{align*}
& H_T^{-1}(G,I_G) \rightarrow H_T^0(G,L^\times) \\
& [a] \mapsto [\Nm_G (\tilde a)]
\end{align*}
where $\tilde a \in L^\times (\varphi)$ is a lift of $a$ under the surjection $L^\times (\varphi) \xrightarrow{\alpha} I_G$. As $\alpha$ is given by sending all elements of $L^\times$ to zero and sending $x_\sigma$ to $\sigma - 1$, composition of the two maps above gives
\begin{align*}
& G^{\on{ab}} \rightarrow H_T^0(G,L^\times) \\
& [\sigma] \mapsto [\Nm_G(\sigma)]
\end{align*}
and we further compute:
\begin{align*}
[\Nm_G(\sigma)] &= \left[ \sum_{\tau \in G} \tau \cdot x_\sigma \right] = \left[ \sum_{\tau \in G} x_{\tau \sigma} - x_\tau + \varphi(\tau, \sigma) \right] = \left[ \sum_{\tau \in G} x_{\tau \sigma} - x_\tau \right] + \left[ \sum_{\tau \in G} \varphi(\tau, \sigma) \right] = \left[ \prod_{\tau \in G} \varphi(\tau, \sigma) \right]
\end{align*}
by definition of $G$-action on the splitting module $L^\times(\varphi)$ and noting that $\sum_{\tau \in G} x_{\tau \sigma} = \sum_{\tau \in G} x_{\tau}$. Here, at the very end, sum switched to product as we use multiplicative notation for elements of $L^\times$. 
\end{proof}

Now we compute local reciprocity using cup product of Tate cohomology. The assertion regards cup product of elements from $H^2$ and $H^{-2}$, and for such computation we will first need computation of cup product for ($H^1$ and $H^{-1}$) , ($H^1$ and $H^{-2}$), and ($H^0$ and $H^p$, $p>0$). 

\begin{proposition}
Cup product of elements $[a_p] \in H_T^p(G,A)$ and $[b_{0}] \in H_T^0(G,B)$ is given by
$$[a_p] \smile [b_0] = [a_p \otimes b_0]$$
\end{proposition}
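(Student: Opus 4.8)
The plan is to verify the identity $[a_p] \smile [b_0] = [a_p \otimes b_0]$ by reducing to the already-known case $r = s = 0$ via dimension shifting in the first variable, exactly as the cup product was constructed. First I would recall that the cup product is characterized by: (i) on $H^0 \times H^0$ it is induced by $a \otimes b \mapsto a \otimes b$, (ii) it is natural in both coefficient arguments, and (iii) it is compatible with connecting homomorphisms in the sense of the two $\delta$-formulas in the cup product proposition. The strategy is: for $p > 0$ use the exact sequence $0 \to A \to \ZZ[G] \otimes A \to J_G \otimes A \to 0$ (or its iterate) to write $H_T^p(G,A) \cong H_T^{p-1}(G, J_G \otimes A)$ via the boundary map $\delta$, and for $p < 0$ use $0 \to I_G \otimes A \to \ZZ[G] \otimes A \to A \to 0$ similarly; in either direction $\ZZ[G] \otimes A$ has vanishing Tate cohomology, so $\delta$ is an isomorphism. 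Since tensoring a short exact sequence with the flat (indeed free-over-$\ZZ$) module $B$ keeps it exact, property (ii) of the cup product proposition applies, and I get a commuting square relating $\smile$ in degree $p$ with $\smile$ in degree $p-1$ (or $p+1$).

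The inductive step then looks as follows. Suppose the formula holds for all modules in degree $p-1$ (the base case $p = 0$ being property (1) of the cup product proposition, and the base case $p=1$ reducing to a direct check on crossed homomorphisms if preferred). Given $[a_p] \in H_T^p(G,A)$ with $p > 0$, write $[a_p] = \delta[c]$ for some $[c] \in H_T^{p-1}(G, J_G \otimes A)$. By the compatibility (2), $[a_p] \smile [b_0] = (\delta[c]) \smile [b_0] = \delta([c] \smile [b_0])$, where on the right the cup product lands in $H_T^{p-1}(G, (J_G \otimes A) \otimes B)$. By the induction hypothesis $[c] \smile [b_0] = [c \otimes b_0]$. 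Now I must check that under the boundary isomorphism $\delta: H_T^{p-1}(G, (J_G \otimes A)\otimes B) \to H_T^p(G, A \otimes B)$ coming from $0 \to A \otimes B \to (\ZZ[G]\otimes A)\otimes B \to (J_G \otimes A)\otimes B \to 0$, the class $[c \otimes b_0]$ maps to $[a_p \otimes b_0]$; this is immediate because $\delta$ is computed by lifting cochains, and a lift of $c$ tensored with the fixed element $b_0$ is a lift of $c \otimes b_0$, so the coboundary commutes with $(-) \otimes b_0$. The case $p < 0$ is entirely parallel, using the sequence $0 \to I_G \otimes A \to \ZZ[G] \otimes A \to A \to 0$ and running the dimension shift in the other direction; the case $p = 0$ is nothing but property (1).

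The one point requiring care — and the main obstacle — is the bookkeeping of the tensor-associativity isomorphisms: the module appearing after shifting is literally $(J_G \otimes A) \otimes B$, which must be identified with $J_G \otimes (A \otimes B)$, and one needs that this canonical isomorphism intertwines the two relevant short exact sequences and the cup product pairings. This is the standard "naturality of $\smile$ under coefficient isomorphisms" bookkeeping; it is routine but must be stated, since without it the two $\delta$'s in the induction are a priori defined on formally different modules. I would handle it by fixing once and for all the identifications $(X \otimes A) \otimes B \cong X \otimes (A \otimes B)$ for $X \in \{\ZZ[G], I_G, J_G\}$, noting these are $G$-module isomorphisms compatible with the structure maps of the sequences, and then invoking the naturality clause of the cup product proposition. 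With that in hand the induction closes and the formula $[a_p] \smile [b_0] = [a_p \otimes b_0]$ holds for all $p \in \ZZ$.
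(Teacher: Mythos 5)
Your argument is correct, but it runs in the opposite direction from the paper's proof: the paper simply declares the pairing $([a_p],[b_0])\mapsto[a_p\otimes b_0]$, checks that it satisfies the defining properties of the cup product, and invokes the uniqueness clause of the cup product proposition, whereas you \emph{derive} the formula from those properties by an explicit dimension-shifting induction on $p$, using $0\to A\to\ZZ[G]\otimes A\to J_G\otimes A\to 0$ for $p>0$ and $0\to I_G\otimes A\to\ZZ[G]\otimes A\to A\to 0$ for $p<0$, together with compatibility (2) and the vanishing of Tate cohomology of induced modules. The two routes are close in spirit (the uniqueness statement is itself established by just such shifts), but yours has the merit of not presupposing that the degree-$(p,0)$ slice of a pairing satisfying the axioms is already pinned down, and it makes explicit the only nontrivial verifications: that $\delta$ commutes with $(-)\otimes b_0$ (valid because $b_0\in B^G$ makes $m\mapsto m\otimes b_0$ a morphism of $G$-modules and of the two short exact sequences) and the associativity bookkeeping $(J_G\otimes A)\otimes B\cong J_G\otimes(A\otimes B)$, which you rightly isolate. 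One justification should be repaired: you assert that the shifted sequences stay exact after $\otimes B$ because $B$ is ``flat (indeed free-over-$\ZZ$)'' --- an arbitrary $G$-module $B$ need not be torsion-free, so this reason is wrong as stated. The needed exactness is still true, because $0\to\ZZ\to\ZZ[G]\to J_G\to 0$ and $0\to I_G\to\ZZ[G]\to\ZZ\to 0$ are sequences of free $\ZZ$-modules and hence split over $\ZZ$; tensoring first with $A$ and then with $B$ therefore preserves exactness (equivalently, $\Tor_1^\ZZ$ of the $\ZZ$-free third term vanishes). With that substitution your induction closes and agrees with the paper's conclusion.
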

\begin{proof}
The maps defined this way satisfy the conditions required for cup product, and by uniqueness of cup product these are the cup product maps.
\end{proof}

\begin{proposition}
Cup product of elements $[a_1] \in H_T^{1}(G,A)$ and $[b_{-1}] \in H_T^{-1}(G,B)$ is given by
\begin{align*}
[a_1] \smile [b_{-1}] = \sum_{\tau \in G} a_1(\tau) \otimes \tau b_{-1}
\end{align*}
\end{proposition}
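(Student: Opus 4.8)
The plan is to reduce the identity to the two cases already in hand — the $r=s=0$ rule $[a]\smile[b]=[a\otimes b]$ and the formula $[a_p]\smile[b_0]=[a_p\otimes b_0]$ just proved — by dimension-shifting the second argument from degree $-1$ to degree $0$, and then matching the two sides through the explicit snake-lemma description of a connecting map.

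First I would check that the right-hand side is well defined. Fixing a $1$-cocycle representing $[a_1]$ and an element $b_{-1}\in\ker(\Nm_G)$ representing $[b_{-1}]$, the crossed-homomorphism relation $a_1(\sigma\tau)=\sigma a_1(\tau)+a_1(\sigma)$ together with $\Nm_G(b_{-1})=0$ shows $\sum_{\tau\in G}a_1(\tau)\otimes\tau b_{-1}$ lies in $(A\otimes B)^G$, so it defines a class in $H_T^0(G,A\otimes B)=(A\otimes B)^G/\Nm_G(A\otimes B)$; replacing $a_1$ by a coboundary $\tau\mapsto\tau a-a$ changes the sum by $\Nm_G(a\otimes b_{-1})$, and replacing $b_{-1}$ by $(\sigma-1)b'$ changes it by $-\Nm_G(\sigma^{-1}a_1(\sigma)\otimes b')$, so the class does not depend on the representatives.

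Next I would dimension-shift. Tensoring $0\to I_G\to\ZZ[G]\xrightarrow{\epsilon}\ZZ\to 0$ over $\ZZ$ with $B$ (and, later, with $A\otimes B$) leaves it exact because $I_G,\ZZ[G]$ are $\ZZ$-free, giving $0\to I_G\otimes B\to\ZZ[G]\otimes B\to B\to 0$ with middle term induced, hence cohomologically trivial. The snake-lemma formula for $\delta\colon H_T^{-1}\to H_T^0$ recalled earlier, applied to the lift $1\otimes b_{-1}$, gives $\delta[b_{-1}]=\bigl[\sum_\tau\tau\otimes\tau b_{-1}\bigr]\in H_T^0(G,I_G\otimes B)$. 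Applying the $H^1\times H^0$ formula yields $[a_1]\smile\delta[b_{-1}]=\bigl[\sigma\mapsto a_1(\sigma)\otimes\sum_\tau\tau\otimes\tau b_{-1}\bigr]$ in $H_T^1(G,A\otimes I_G\otimes B)$, while the third compatibility property of the cup product — $a\smile\delta b=(-1)^{r}\delta(a\smile b)$, whose hypothesis on exactness of $A\otimes(-)$ is again automatic by $\ZZ$-freeness — gives $[a_1]\smile\delta[b_{-1}]=-\,\delta'\bigl([a_1]\smile[b_{-1}]\bigr)$, where $\delta'\colon H_T^0(G,A\otimes B)\to H_T^1(G,A\otimes I_G\otimes B)$ is an isomorphism since $A\otimes\ZZ[G]\otimes B\cong\ZZ[G]\otimes(A\otimes B)$ is cohomologically trivial.

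It then remains to verify that $\delta'$ sends the class $\bigl[\sum_\tau a_1(\tau)\otimes\tau b_{-1}\bigr]$ to $-\bigl[\sigma\mapsto a_1(\sigma)\otimes\sum_\tau\tau\otimes\tau b_{-1}\bigr]$, after which injectivity of $\delta'$ concludes. For this I would lift $\sum_\tau a_1(\tau)\otimes\tau b_{-1}$ to $\sum_\tau a_1(\tau)\otimes 1\otimes\tau b_{-1}$, apply $\sigma-1$, and simplify with the cocycle relation and $\Nm_G(b_{-1})=0$ to the cocycle $\sigma\mapsto\sum_\tau a_1(\tau)\otimes(\sigma-1)\otimes\tau b_{-1}$; using $\sum_\tau\tau\otimes\tau b_{-1}=\sum_\tau(\tau-1)\otimes\tau b_{-1}$ inside $I_G\otimes B$, the difference between this cocycle and (minus) the target turns out to be the coboundary of $\sum_\tau a_1(\tau)\otimes(\tau-1)\otimes\tau b_{-1}$, the stray terms again vanishing by $\Nm_G(b_{-1})=0$. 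This last coboundary computation, together with keeping the identifications $A\otimes(I_G\otimes B)\cong I_G\otimes(A\otimes B)$ and the sign in the third compatibility property consistent, is the only delicate point; the rest is formal.
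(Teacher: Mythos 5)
Your argument is correct, but it runs in the opposite direction from the paper's. The paper shifts the degree-$1$ class \emph{down}: using $0 \ra A \ra \ZZ[G]\otimes A \ra J_G\otimes A \ra 0$ it writes $[a_1]=\delta[a_0]$ with $a_0\in H_T^0(G,J_G\otimes A)$, applies compatibility property (2) ($\delta a \smile b = \delta(a\smile b)$, no sign), and then a single application of the explicit norm formula for the connecting map $H_T^{-1}\ra H_T^0$ gives $[\Nm_G(a_0'\otimes b_{-1})]=\l[\sum_\tau (a_1(\tau)+a_0')\otimes\tau b_{-1}\r]$, which collapses to the stated class since $\Nm_G b_{-1}=0$. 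You instead shift $[b_{-1}]$ \emph{up} through $0\ra I_G\otimes B\ra\ZZ[G]\otimes B\ra B\ra 0$, use the already-proved $H_T^1\times H_T^0$ formula, invoke property (3) with its sign $(-1)^1$, and then must both identify the image of the candidate class under the connecting isomorphism $\delta'\colon H_T^0(G,A\otimes B)\ra H_T^1(G,A\otimes I_G\otimes B)$ and check an explicit coboundary (of $\pm\sum_\tau a_1(\tau)\otimes(\tau-1)\otimes\tau b_{-1}$; the sign of this primitive is immaterial since you only need the difference to be a coboundary, and your computation does go through using the cocycle relation and $\Nm_G b_{-1}=0$). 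The trade-off: your route costs more bookkeeping (the sign in property (3), injectivity of $\delta'$, the final cocycle-vs-coboundary verification, and the $\ZZ$-split exactness needed for the hypothesis of property (3)), but it only ever uses cup-product facts the paper has literally established, whereas the paper's shorter computation quietly uses the $H_T^0\times H_T^{-1}$ evaluation $[a_0]\smile[b_{-1}]=[a_0\otimes b_{-1}]$, a case not strictly among the stated ones; your well-definedness check of the right-hand side is also a worthwhile addition that the paper omits.
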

\begin{proof}
We'll find $a_0$ such that $[a_1] = \delta [a_0]$ to calculate cup product using connecting homomorphisms. Since $H_T^1(G, \ZZ[G] \otimes A)=0$, we can find $a_0' \in \ZZ[G] \otimes A$ such that $$\d a_0' = (\sigma \mapsto \sigma a_0' - a_0') = a_1$$
Let $a_0 \in J_G \otimes A$ be the image of $a_0'$ under map $\ZZ[G] \otimes A \rightarrow J_G \otimes A$. Then $[a_1] = \delta[a_0]$ and therefore
\begin{align*}
[a_1] \smile [b_{-1}] &= \delta[a_0] \smile [b_{-1}] = \delta([a_0] \smile [b_{-1}]) = \delta([a_0 \otimes b_{-1}]) = [\Nm_G(a_0' \otimes b_{-1} ) ] \\
&= \left[ \sum_{\tau \in G} \tau a_0' \otimes \tau b_{-1} \right] \\
&= \left[ \sum_{\tau \in G} (a_1(\tau) + a_0') \otimes \tau b_{-1} \right] \\
&= \left[ \sum_{\tau \in G} (a_1(\tau) \otimes \tau b_{-1}) \right] + \left[ N_G(a_0' \otimes \Nm_G b_{-1}) \right] \\
&= \left[ \sum_{\tau \in G} (a_1(\tau) \otimes \tau b_{-1}) \right] + \left[ a_0' \otimes 0 \right] \\
&= \left[ \sum_{\tau \in G} (a_1(\tau) \otimes \tau b_{-1}) \right]
\end{align*}
\end{proof}

\begin{proposition}
Cup product of elements $[a_1] \in H_T^1(G,A)$ and $[\sigma] \in G^{\on{ab}} \cong H_T^{-2}(G,A)$ is given by
$$[a_1] \smile [\sigma] = [a_1(\sigma)] \in H^{-1}(G,A)$$
\end{proposition}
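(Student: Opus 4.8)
The plan is to reduce the claim to the previously established formula for the cup product $H_T^1 \times H_T^{-1} \to H_T^0$ by dimension-shifting the second factor. Concretely, I would use the short exact sequence $0 \ra I_G \ra \ZZ[G] \ra \ZZ \ra 0$, whose connecting homomorphism $\delta \colon H_T^{-2}(G,\ZZ) \to H_T^{-1}(G,I_G)$ is the dimension-shift isomorphism and, under the identifications recalled in Proposition~\ref{lr explicit}, carries the class $[\sigma]$ to $[\sigma-1]$. Tensoring this sequence with $A$ leaves it exact (as $\ZZ$ is $\ZZ$-free, equivalently because $\ZZ[G]\cong I_G\oplus\ZZ$ as abelian groups), so the compatibility $a\smile\delta b = (-1)^r\delta(a\smile b)$ from the cup-product proposition gives
$$[a_1]\smile[\sigma-1] = [a_1]\smile\delta[\sigma] = -\,\delta\bigl([a_1]\smile[\sigma]\bigr),$$
where the outer $\delta$ is now the dimension-shift isomorphism $H_T^{-1}(G,A)\to H_T^0(G,A\otimes I_G)$ attached to $0\ra A\otimes I_G\ra A\otimes\ZZ[G]\ra A\ra 0$.

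Next I would compute the left-hand side from the preceding proposition computing $[a_1]\smile[b_{-1}]$, applied with $B=I_G$ and representative $b_{-1}=\sigma-1$, obtaining the class of $\sum_{\tau\in G} a_1(\tau)\otimes\tau(\sigma-1)$ in $H_T^0(G,A\otimes I_G)$. The core of the proof is rewriting this representative: splitting $\tau\sigma-\tau=(\tau\sigma-1)-(\tau-1)$, reindexing $\rho=\tau\sigma$ in the first piece, applying the crossed-homomorphism identity $a_1(\rho)=(\rho\sigma^{-1})a_1(\sigma)+a_1(\rho\sigma^{-1})$, reindexing once more by $\mu=\rho\sigma^{-1}$, and using $\mu\sigma-1=\mu(\sigma-1)+(\mu-1)$ to peel off the norm term $\Nm_G\bigl(a_1(\sigma)\otimes(\sigma-1)\bigr)$, which is trivial in Tate degree $0$. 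The upshot is that $\sum_\tau a_1(\tau)\otimes\tau(\sigma-1)$ represents $-\bigl[\sum_{\mu\in G}\mu\,a_1(\sigma)\otimes(\mu-1)\bigr]$. Along the way I would record that $a_1(\sigma)\in\ker\Nm_G$ — summing the cocycle identity $a_1(g\sigma)=g\,a_1(\sigma)+a_1(g)$ over $g\in G$ and reindexing forces $\Nm_G(a_1(\sigma))=0$ — and that its class is independent of the chosen cocycle (two cohomologous cocycles differ at $\sigma$ by $(\sigma-1)c\in I_G A$), so that $[a_1(\sigma)]\in H_T^{-1}(G,A)$ is well defined.

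Finally, I would identify $\bigl[\sum_\mu \mu\,a_1(\sigma)\otimes(\mu-1)\bigr]$ as $\delta[a_1(\sigma)]$: lifting $a_1(\sigma)$ to $a_1(\sigma)\otimes 1\in A\otimes\ZZ[G]$, the snake-lemma formula gives $\delta[a_1(\sigma)]=[\Nm_G(a_1(\sigma)\otimes 1)]=\bigl[\sum_\mu\mu\,a_1(\sigma)\otimes(\mu-1)\bigr]$ after moving the sum into $A\otimes I_G$ (the $A\otimes\ZZ$-component vanishes since $\Nm_G(a_1(\sigma))=0$). Combining with the first display yields $-\delta\bigl([a_1]\smile[\sigma]\bigr)=-\delta[a_1(\sigma)]$, and since $\delta$ is an isomorphism we conclude $[a_1]\smile[\sigma]=[a_1(\sigma)]$. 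I expect the only real obstacle to be the middle step: keeping the two reindexings and the crossed-homomorphism substitution straight, tracking the sign coming from the cup-product compatibility, and correctly recognizing which terms are norms and hence vanish in $H_T^0$ — there is no conceptual difficulty once both dimension-shift isomorphisms are written out explicitly.
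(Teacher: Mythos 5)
Your proposal is correct and follows essentially the same route as the paper: reduce via the connecting isomorphism $\delta\colon H_T^{-1}(G,A)\to H_T^{0}(G,A\otimes I_G)$ attached to $0\to A\otimes I_G\to A\otimes\ZZ[G]\to A\to 0$, use $a\smile\delta b=(-1)^r\delta(a\smile b)$ together with the $H_T^{1}\times H_T^{-1}$ formula applied to $[\sigma-1]$, and compare with $\delta[a_1(\sigma)]=[\Nm_G(a_1(\sigma)\otimes 1)]$ using the crossed-homomorphism identity, discarding norm terms in Tate degree $0$. Your extra checks (that $\Nm_G(a_1(\sigma))=0$ and that $[a_1(\sigma)]$ is independent of the chosen cocycle) are details the paper leaves implicit, but the argument is the same.
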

\begin{proof}
Consider exact sequence
$$0 \ra A \otimes I_G \ra A \otimes \ZZ[G] \ra A \ra 0$$
Since cohomology of $A \otimes \ZZ[G]$ vanishes, connecting homomorphism $\delta : H_T^{-1}(G,A) \ra H_T^0(G,A \otimes I_G)$ is an isomorphism. Thus it suffices to show $\delta([a_1] \smile [\sigma]) = \delta([a_1 (\sigma)])$. 

$\delta([a_1(\sigma)])$ is
\begin{align*}
\delta([a_1(\sigma)]) = \left[ \Nm_G(a_1(\sigma) \otimes 1) \right] = \left[ \sum_{\tau \in G} \tau a_1(\sigma) \otimes \tau \right]
\end{align*}
Also
\begin{align*}
\delta([a_1] \smile [\sigma]) & = -([a_1] \smile \delta[\sigma]) \\
& = -[a_1] \smile [\sigma - 1] \\
& = \left[ \sum_{\tau \in G} a_1(\tau) \otimes \tau(1 - \sigma) \right] \\
& = \left[ \sum_{\tau \in G} a_1(\tau) \otimes \tau - \sum_{\tau \in G} a_1(\tau) \otimes \tau \sigma \right] \\
& = \left[ \sum_{\tau \in G} a_1(\tau) \otimes \tau - \sum_{\tau \in G} (a_1(\tau \sigma) - \tau a_1(\sigma)) \otimes \tau \sigma \right] \\
& = \left[ \sum_{\tau \in G} \tau a_1(\sigma) \otimes \tau \sigma \right]
\end{align*}
Therefore
\begin{align*}
\delta \left( [a_1(\sigma)] - [a_1] \smile [\sigma] \right) &= \left[ \sum_{\tau \in G} \tau a_1(\sigma) \otimes \tau \right] - \left[ \sum_{\tau \in G} \tau a_1(\sigma) \otimes \tau \sigma \right] \\
&= \left[ \sum_{\tau \in G} \tau a_1(\sigma) \otimes \tau(1-\sigma) \right] \\
&= \left[ \Nm_G (a_1(\sigma) \otimes (1 - \sigma)) \right] \\
&= 0 \in \frac{(A \otimes I_G)^G}{\Nm_G (A \otimes I_G)} = H_T^0(G, A\otimes I_G)
\end{align*}
as desired.
\end{proof}

\begin{proposition}[Local Reciprocity as Cup Product]
Inverse of local reciprocity is given as cup product
\begin{align*}
& H_T^{-2}(G,\ZZ) \rightarrow H_T^0(G,L^\times) \\
& [\sigma] \mapsto [\varphi] \smile [\sigma]
\end{align*}
where $[\varphi]$ is a generator of $H_T^2(G,L^\times)$, i.e. the fundamental class.
\end{proposition}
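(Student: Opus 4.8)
The plan is to match cup product with $[\varphi]$ against the iterated connecting homomorphism already computed in Proposition~\ref{lr explicit}. There we showed that the composite
$$G^{\on{ab}}\xrightarrow{\sim}H_T^{-2}(G,\ZZ)\xrightarrow{\delta_1}H_T^{-1}(G,I_G)\xrightarrow{\delta_2}H_T^0(G,L^\times)$$
sends $[\sigma]$ to $\bigl[\prod_{\tau\in G}\varphi(\tau,\sigma)\bigr]$, where $\delta_1$ is the connecting map of $0\to I_G\to\ZZ[G]\to\ZZ\to 0$ and $\delta_2$ that of $0\to L^\times\to L^\times(\varphi)\xrightarrow{\alpha}I_G\to 0$. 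So it suffices to prove $[\varphi]\smile[\sigma]=\bigl[\prod_{\tau\in G}\varphi(\tau,\sigma)\bigr]$ for every $\sigma$; the link between the degree-$2$ class $[\varphi]$ and the low-degree cup-product formulas proven above is a single dimension shift through the splitting module.

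First I would exhibit the class that dimension-shifts $[\varphi]$. Let $\psi\in H_T^1(G,I_G)$ be represented by the crossed homomorphism $\tau\mapsto\tau-1$; this is indeed a $1$-cocycle since $\rho\tau-1=\rho(\tau-1)+(\rho-1)$. Lifting $\psi$ through $\alpha$ by the set-map $\tau\mapsto x_\tau$ (with $x_1=1$) and using the defining action $\sigma\cdot x_\tau=x_{\sigma\tau}-x_\sigma+\varphi(\sigma,\tau)$ on $L^\times(\varphi)$, the explicit formula for the connecting homomorphism gives $\delta_2(\psi)=[\d(\tau\mapsto x_\tau)]=[\varphi]$. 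Next I would run the cup-product computation. Because the second argument of $\smile$ lies over the trivial module $\ZZ$ and $\ZZ\otimes_\ZZ-$ is exact, property~(2) of the cup product applies to $0\to L^\times\to L^\times(\varphi)\to I_G\to 0$ and gives
$$[\varphi]\smile[\sigma]=\delta_2(\psi)\smile[\sigma]=\delta_2\bigl(\psi\smile[\sigma]\bigr).$$
By the already-established formula for the pairing $H_T^1(G,-)\times H_T^{-2}(G,\ZZ)\to H_T^{-1}(G,-)$, one has $\psi\smile[\sigma]=[\psi(\sigma)]=[\sigma-1]\in H_T^{-1}(G,I_G)$. Finally $\delta_2([\sigma-1])=\bigl[\prod_{\tau\in G}\varphi(\tau,\sigma)\bigr]$: this is exactly the second step in the proof of Proposition~\ref{lr explicit}, since $x_\sigma$ lifts $\sigma-1$ under $\alpha$ and $\Nm_G(x_\sigma)=\prod_{\tau\in G}\varphi(\tau,\sigma)$. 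Chaining the three equalities yields $[\varphi]\smile[\sigma]=\bigl[\prod_{\tau\in G}\varphi(\tau,\sigma)\bigr]$, which by Proposition~\ref{lr explicit} is the inverse local reciprocity map, so we are done.

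I expect the main obstacle to be bookkeeping rather than any hard idea: one must verify that $\tau\mapsto\tau-1$ is a cocycle and carry out the connecting-homomorphism computation carefully through the mixed additive/multiplicative conventions on $L^\times(\varphi)$; one must check the hypothesis of cup-product property~(2) (automatic here, since the relevant tensor factor is $\ZZ$) and confirm that the $\delta_2$ produced by property~(2) is literally the connecting map used in Proposition~\ref{lr explicit}; and one should make sure the class $[\sigma-1]$ emerging from $\psi\smile[\sigma]$ agrees with the image of $[\sigma]$ under the dimension-shift isomorphism $H_T^{-2}(G,\ZZ)\cong H_T^{-1}(G,I_G)=I_G/I_G^2$ used there. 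It is worth noting that this is the case $C=L^\times$, $r=-2$ of the general Tate--Nakayama fact that the isomorphism $H_T^r(G,\ZZ)\xrightarrow{\sim}H_T^{r+2}(G,C)$ of Theorem~\ref{tate} is cup product with the fundamental class of $C$; the general statement is proven by the same sort of dimension shift (using cup-product property~(3) twice, together with the identification of $\psi$ with the image of the canonical generator of $H_T^0(G,\ZZ)$), and one could set it up that way and then specialize.
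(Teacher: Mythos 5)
Your proof is correct, and its skeleton matches the paper's: reduce via Proposition~\ref{lr explicit} to the identity $[\varphi]\smile[\sigma]=\bigl[\prod_{\tau\in G}\varphi(\tau,\sigma)\bigr]$, write $[\varphi]$ as a connecting image of a degree-one class, pull $\delta$ out of the cup product by property (2) (whose hypothesis is vacuous since the second factor is $\ZZ$), and finish with the $H_T^1\times H_T^{-2}$ formula and an explicit connecting-homomorphism computation. The genuine difference is the dimension shift you choose. The paper realizes $[\varphi]=\delta[\eta]$ through $0\to L^\times\to\ZZ[G]\otimes L^\times\to J_G\otimes L^\times\to 0$, solving $\varphi=\d\eta_0$ in the cohomologically trivial module $\ZZ[G]\otimes L^\times$ and then using the $2$-cocycle identity to evaluate $\delta([\eta(\sigma)])=\bigl[\sum_\tau\tau\eta_0(\sigma)\bigr]=\bigl[\prod_\tau\varphi(\tau,\sigma)\bigr]$. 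You instead shift through the splitting-module sequence $0\to L^\times\to L^\times(\varphi)\xrightarrow{\alpha} I_G\to 0$ with the canonical class $\psi=[\tau\mapsto\tau-1]$ and lift $\tau\mapsto x_\tau$, so that $\delta\psi=[\varphi]$ is exactly the coboundary fact $\d(\tau\mapsto x_\tau)=\varphi$ already used in the proof of Tate's theorem, and the final evaluation $\delta([\sigma-1])=[\Nm_G(x_\sigma)]=\bigl[\prod_\tau\varphi(\tau,\sigma)\bigr]$ is literally the second arrow of Proposition~\ref{lr explicit} rather than a new computation. Your route is a bit more economical in bookkeeping (no auxiliary $\eta_0$; the last step is a citation), at the price of tying the argument to the particular splitting module; the paper's route uses only the generic vanishing $H^2(G,\ZZ[G]\otimes L^\times)=0$, which is the shape of argument that extends to an arbitrary class module in the general Tate--Nakayama statement you mention at the end. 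Both are complete proofs.
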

\begin{proof}
By Proposition~\ref{lr explicit}, we already know that inverse of local reciprocity is given by $[\sigma] \mapsto [\prod_{\tau \in G} \varphi(\tau,\sigma)]$ and therefore it suffices to prove that
\begin{align*}
[\varphi] \smile [\sigma] = \left[ \prod_{\tau \in G} \varphi(\tau,\sigma) \right]
\end{align*}
To make use of connecting homomorphisms, we'll find $\eta$ such that $[\varphi]=\delta [\eta]$. Consider
$$0 \ra L^\times \ra \ZZ[G] \otimes L^\times \ra J_G \otimes L^\times \ra 0$$
and
\[\begin{tikzcd}
0 \ar[r] & \Hom_{\ZZ[G]}(P_1, L^\times) \ar[r]\ar[d] & \Hom_{\ZZ[G]}(P_1, \ZZ[G] \otimes L^\times) \ar[r]\ar[d] & \Hom_{\ZZ[G]}(P_1, J_G \otimes L^\times) \ar[r]\ar[d] & 0 \\
0 \ar[r] & \Hom_{\ZZ[G]}(P_2, L^\times) \ar[r] & \Hom_{\ZZ[G]}(P_2, \ZZ[G] \otimes L^\times) \ar[r] & \Hom_{\ZZ[G]}(P_2, J_G \otimes L^\times) \ar[r] & 0
\end{tikzcd}\]
Here $\varphi \in \Hom_{\ZZ[G]}(P_2, L^\times)$ and since $H^2(G,\ZZ[G] \otimes L^\times) = 0$, $\varphi = \d \eta_0$ when we regard $\varphi$ as an element of $\Hom_{\ZZ[G]}(P_2, \ZZ[G] \otimes L^\times)$. Further let image of $\eta_0$ under the map $\Hom(P_1, \ZZ[G] \otimes L^\times) \rightarrow \Hom(P_1, J_G \otimes L^\times)$ be $\eta$. Then by definition of connecting homomorphism, we have $[\varphi] = \delta [\eta]$. 

Therefore
\begin{align*}
[\varphi] \smile [\sigma] &= \delta[\eta] \smile [\sigma] = \delta([\eta] \smile [\sigma]) = \delta([\eta(\sigma)]) \\
&= \left[ \sum_{\tau \in G} \tau \eta_0(\sigma) \right] = \left[ \sum_{\tau \in G} ( \varphi(\tau, \sigma) + \eta_0(\tau \sigma) - \eta_0 (\tau) ) \right] = \left[ \prod_{\tau \in G} \varphi(\tau, \sigma) \right]
\end{align*}
where we used the fact that $\varphi (\tau, \sigma) = \d \eta_0(\tau,\sigma) = \tau \eta_0(\sigma) -\eta_0(\tau \sigma) + \eta_0 (\tau)$ and we switched to multiplicative notion at the end because we are working with group law of $L^\times$.
\end{proof}

\end{document}